\renewcommand{\vec}[1]{\boldsymbol #1}
\newcommand{\domain}{D}
\newcommand{\bdry}{\partial D}
\newcommand{\diver}{\mathrm{div}}
\newcommand{\flux}{\mathbb{f}}
\newcommand{\normal}{\boldsymbol{n}}
\newcommand{\bzero}{\boldsymbol{0}}
\newcommand{\mom}{\boldsymbol{m}}
\newcommand{\vel}{\boldsymbol{v}}
\newcommand{\dt}{\tau}
\newcommand{\low}{\mathrm{L}}
\newcommand{\totme}{\mathcal{E}}
\newcommand{\state}{\vec u}
\newcommand{\vertices}{\mathcal{V}}
\newcommand{\transp}{\top}
\newcommand{\bc}{\mathbf{c}}
\newcommand{\Epot}{\varphi}
\newcommand{\xcoord}{\vec x}
\newcommand{\inte}{\varepsilon}
\newcommand{\specinte}{e}
\newcommand{\Ltwo}{L^2(\domain)}
\newcommand{\Ltwod}{L^2(\domain)^d}
\title{Structure-preserving finite-element approximations of the magnetic\\
  Euler-Poisson equations}
\author{%
  Jordan Hoffart\footnotemark[1]
  \and Matthias~Maier\footnotemark[1]
  \and John~N.~Shadid\footnotemark[2]
  \and Ignacio~Tomas\footnotemark[3]
}
\begin{document}

\maketitle

\renewcommand{\thefootnote}{\fnsymbol{footnote}}

\footnotetext[1]{%
  Department of Mathematics, Texas A\&M University, 3368 TAMU,
  College Station, TX 77843, USA (\email{\{jordanhoffart, maier\}@tamu.edu})}

\footnotetext[2]{%
  Sandia National Laboratories$^{1}$, P.O. Box 5800, MS 1320, Albuquerque,
  NM 87185, USA (\email{jnshadi@sandia.gov}); and Department of Mathematics
  and Statistics, University of New Mexico, MSC01 1115, Albuquerque, NM
  87131, United States}

\footnotetext[3]{%
  Department of Mathematics and Statistics, Texas Tech University,
  2500 Broadway, Lubbock, TX 79409, USA (\email{igtomas@ttu.edu})}

\renewcommand{\thefootnote}{\arabic{footnote}}

\begin{abstract}
  We develop a structure-preserving numerical discretization for the
  electrostatic Euler-Poisson equations with a given magnetic field. The
  magnetic field extends the purely electrostatic Euler-Poisson system
  considered previously, and changes the dynamics by introducing new
  time-scales into the problem: the cyclotron and diocotron time-scales.
  Our focus in the present work is on the efficient solution of problems
  close to the magnetic-drift limit, also called
  $\textrm{E}\!\times\!\textrm{B}$-drift limit. Such a regime is
  characterized by the co-existence of slowly moving, smooth flows with
  very high-frequency oscillations, spanning timescales with a difference
  in excess of $\mathcal{O}(10^{12})$, making their numerical solution
  quite challenging. Our scheme oversteps the plasma and cyclotron scales,
  with a time-step size restricted only by a hyperbolic CFL condition.

  The scheme preserves positivity of the density, positivity of the
  internal energy, a minimum principle for the specific entropy, and a
  total energy balance. The scheme uses an operator splitting approach
  composed of two subsystems: the compressible Euler equations of gas
  dynamics, and a source system. The source system couples the
  electrostatic potential, momentum, and Lorentz force, thus incorporating
  electrostatic plasma and cyclotron motions. Because of the high-frequency
  phenomena it describes, the source system is discretized with an implicit
  time-stepping scheme. We use a PDE Schur complement approach for the
  linear algebra associated to the implicit source system. The Lorentz
  force is eliminated pointwise from the implicit system, so that only a
  single scalar, non-symmetric Poisson-like problem has to be solved per
  time step, which in turn enables the use of matrix-free linear algebra.
  Because of the efficiency of the matrix-free linear algebra, the implicit
  source solve costs about as much as a single explicit hyperbolic update,
  and accounts for less than a third of the total wall time.

  We illustrate the capability of the scheme by computing a diocotron
  instability and present growth rates that compare favorably with existing
  analytical results. The model, though a simplified version of the
  Euler-Maxwell system, represents a stepping stone toward electromagnetic
  solvers that are capable of working in the electrostatic and
  magnetic-drift limits as well as the hydrodynamic regime.
\end{abstract}

\begin{keywords}
  Magnetic Euler-Poisson equations, Lorentz force, operator splitting,
  invariant domain preservation, PDE Schur complement, magnetic drift limit,
  discrete energy balance.
\end{keywords}

\begin{AMS}
  65M22, 35L65, 35Q31
\end{AMS}


\section{Introduction}

The Euler-Poisson system models the dynamics of a single electric species
subject to electric fields. This model is one of the simplest models
describing electrically non-neutral plasmas \cite{Davidson2001}. The
Euler-Poisson model contains hydrodynamic time-scales as well as
electrostatic plasma oscillations. The most notable feature of this model is
the large separation between the hydrodynamical and plasma time-scales, see
\cite{eulerpoisson23}. For reference, traditional MHD models are usually
derived as zero-permittivity
and zero-electron-mass limits of the two-fluid plasma model \cite{Goed2004}.
Therefore, traditional MHD models cannot possibly describe electrostatic
forces, plasma oscillations, and/or cyclotron motions, since such
phenomenology has been eliminated by construction \cite{Sentis2014}.

In the present paper, we explore the addition of a magnetic field to the
Euler-Poisson system. This is a subtle modification of the purely
electrostatic Euler-Poisson system considered in our previous work
\cite{eulerpoisson23}. The addition of the magnetic field to the
Euler-Poisson system introduces new time-scales and a new asymptotic limit.
In this paper, the main regime of interest is the
$\textrm{E}\!\times\!\textrm{B}$-drift limit, also called the guiding center
drift limit or the \emph{magnetic-drift limit}. The mathematical
understanding of \emph{kinetic models} and corresponding numerical schemes in
the magnetic-drift limit and the modeling of \emph{strongly magnetized
plasmas} is a topic that has received significant attention in the last two
decades \cite{Frenod1998, Golse1999, Frenod2000, Bostan2007, Mottez2008,
BenAbdallah2008, Herda2016, HanKwan2012, Frenod2015, Deluzet2017}. However,
the amount of published work related to the robust numerical approximation of
the magnetic-drift limit using \emph{fluid models} is rather scarce; we refer
to \cite{Degond2009, Degond2011} as notable exceptions.

For both kinetic and fluid models, the problem is hard enough such that a few
strong assumptions are usually made in order to make some analytical or
computational progress. For instance, standard assumptions are that the
$\textrm{B}$-field is perfectly perpendicular to the velocity field. Similarly,
it is common to assume that the electric field $\textrm{E}$ and/or the magnetic
field $\textrm{B}$ is \emph{given}, rather than computed self-consistently from
the evolution equations. In this paper we assume that the magnetic field
$\textrm{B}$ is given, but the electric field $\textrm{E}$ is computed
self-consistently from the evolution equations. This allows, in particular, to
focus on developing a computationally efficient time integration technique with
mathematical guarantees of structure preservation. Some basic questions of
practical interest are: What components of the system can be treated using
explicit time integration, and what parts require the use of implicit methods?
What structural properties need to be preserved?

Our goal with the present publication is to develop a semi-implicit
numerical method for the magnetic Euler-Poisson system that (a) is provably
\emph{robust}, meaning that it preserves relevant solution structure on the
\emph{fully discrete} level, and (b) allows to solve linear subproblems
efficiently without the need to resolve the fast time scales of
high-frequency electrostatic plasma and cyclotron oscillations. We note
that, at the time of this writing, the main approach explored for this
problem has consisted of fully coupled and fully implicit schemes
\cite{Crockatt2021, Crockatt2022, LaSpina2024}.

The outline of the paper is as follows: Section \ref{sec:preliminaries}
discusses preliminary aspects on the Euler equations, the Euler-Poisson system
with a given magnetic field and its time-scales, and the magnetic-drift limit.
Section \ref{sec:time-stepping} describes the operator splitting approach and
the PDE Schur complement technique (see Section \ref{sec:PDESchur}) used to
solve the corresponding linear system. Section \ref{sec:FullyDiscrete} provides
a fully discrete description of the scheme and discusses theoretical results.
Section \ref{sec:numerical} provides validation results and computations of the
diocotron instability with validation of growth rates. The current work is part
of the research program outlined by the authors in \cite{eulerpoisson23}, as
well as in two lab reports \cite{eulerpoisson21, eulerpoisson22}. Most of
the the new ingredients of the scheme are detailed in Section
\ref{sec:PDESchur}, which stem from the difficulties associated to magnetic
field and its elimination from the implicit system.


\section{Preliminaries}
\label{sec:preliminaries}

We first introduce the magnetic Euler-Poisson equations and summarize a number
of important structural properties of the PDE system. A detailed discussion of
the system and key features for the design of numerical methods can be found in
\cite{eulerpoisson21, eulerpoisson22, eulerpoisson23}. For the sake of
completeness, we summarize a number of key observations in detail that will be
important for the design of our numerical method.


\subsection{The compressible Euler equations with forces}
Let $\domain \subset \mathbb R^d$, $d=2,3$, be an open, bounded, simply
connected Lipschitz domain with unit outward normal $\vec n$. We consider a
fluid occupying $\domain$ with mass density $\rho(\vec x, t)$, velocity $\vec
v(\vec x, t) \in \mathbb R^d$, momentum (per unit volume) $\vec m := \rho\vec
v$, pressure $p(\vec x, t) \in \mathbb R$, and total mechanical energy (per
unit volume) $\mathcal E(\vec x, t)$. We assume that the fluid motion is given
by the compressible Euler equations subject to a force described by $\vec
f(\vec x, t) \in \mathbb R^d$ with units of force per volume. The compressible
Euler equations then read,
\begin{align}
  \label{eq:euler_force}
  \begin{cases}
    \begin{aligned}
      \partial_t \rho + \nabla \cdot \vec m
      &= 0,
      \\[0.5em]
      \partial_t \vec m + \nabla \cdot (\rho^{-1}\vec
      m\vec m^{\transp} + \mathbb I p)
      &= \vec f,
      \\[0.5em]
      \partial_t \mathcal E + \nabla \cdot (\rho^{-1}\vec m(\mathcal E
      + p)) & = \rho^{-1}\vec m \cdot
      \vec f.
    \end{aligned}
  \end{cases}
\end{align}
Let $\state := \big[\rho,\vec m, \mathcal E\big]^T$ denote the combined state
vector. We summarize a fundamental observation about the  time evolution of
entropies for such a forced system \cite{eulerpoisson21, eulerpoisson22,
eulerpoisson23, Dao2024}:
\begin{lemma}
  \label{lem:entropies_are_orthogonal}%
  Let $\Psi(\state):\mathbb{R}^{d+2} \rightarrow \mathbb{R}$ be a function
  of the state satisfying $\Psi(\state):= \psi\big(\rho,
  \specinte(\state)\big)$, where $\psi(\cdot, \cdot): \mathbb{R}^+ \times
  \mathbb{R}^+ \rightarrow \mathbb{R}$ and $\specinte(\state) :=
  \frac{\mathcal E}{\rho} - \frac{|\vec m|^2}{2 \rho^2}$ is the specific
  internal energy. Let $\state(\vec x, t)$ be a sufficiently smooth
  solution to \eqref{eq:euler_force}. Then,
  \begin{align}
    \label{eq:entropy_evolution}
    \partial_t\Psi(\state) +
    \nabla_{\state} \Psi(\state) \cdot
    \begin{bmatrix}
      \nabla \cdot \vec m
      \\
      \nabla \cdot (\rho^{-1}\vec m\vec m^{\transp} + \mathbb I p)
      \\
      \nabla \cdot (\rho^{-1}\vec m(\mathcal E + p))
    \end{bmatrix}
    = 0,
  \end{align}
  where $\nabla_{\state}$ is the gradient with respect to the state. In other
  words, the time evolution of $\Psi(\state(\vec x, t))$ does not directly
  depend on the force field $\vec f(\vec x, t)$.
\end{lemma}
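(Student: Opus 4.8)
The plan is to reduce the claimed identity to a pointwise algebraic cancellation in the force terms, since the flux part of \eqref{eq:entropy_evolution} is accounted for automatically by the chain rule. First I would differentiate $\Psi(\state(\vec x,t))$ in time and apply the chain rule to obtain $\partial_t \Psi(\state) = \nabla_\state \Psi(\state) \cdot \partial_t \state$. Substituting the evolution equations \eqref{eq:euler_force}, I would write $\partial_t\state$ as the negative of the flux-divergence column vector that already appears in \eqref{eq:entropy_evolution} plus the force column vector $\big[\,0,\ \vec f,\ \rho^{-1}\vec m\cdot\vec f\,\big]^{\transp}$. Moving the flux contribution to the left-hand side, the assertion \eqref{eq:entropy_evolution} becomes equivalent to the single statement
\begin{align*}
  \nabla_\state \Psi(\state) \cdot
  \begin{bmatrix} 0 \\ \vec f \\ \rho^{-1}\vec m\cdot\vec f \end{bmatrix}
  = 0 .
\end{align*}

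Next I would compute the momentum and energy blocks of $\nabla_\state\Psi$ from the functional dependence $\Psi = \psi(\rho,\specinte)$ together with the definition of $\specinte$; the density block is irrelevant here since it multiplies the vanishing first force component. By the chain rule, $\partial_{\vec m}\Psi = (\partial_\specinte\psi)\,\partial_{\vec m}\specinte = -\rho^{-2}(\partial_\specinte\psi)\,\vec m$ and $\partial_{\mathcal E}\Psi = (\partial_\specinte\psi)\,\partial_{\mathcal E}\specinte = \rho^{-1}(\partial_\specinte\psi)$, using $\partial_{\vec m}\specinte = -\rho^{-2}\vec m$ and $\partial_{\mathcal E}\specinte = \rho^{-1}$. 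Contracting these against the force vector gives $-\rho^{-2}(\partial_\specinte\psi)\,\vec m\cdot\vec f + \rho^{-1}(\partial_\specinte\psi)\,(\rho^{-1}\vec m\cdot\vec f)$, and the two terms cancel identically, which establishes the reduced identity and hence the lemma.

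The calculation is short; the point I would highlight is that the cancellation is structural rather than accidental. The force enters the momentum equation as $\vec f$ and the energy equation as the associated power $\rho^{-1}\vec m\cdot\vec f = \vec v\cdot\vec f$, and this is matched precisely by the quadratic dependence of the specific internal energy on the momentum through $-|\vec m|^2/(2\rho^2)$. I expect the only mild subtlety to be keeping the partial derivatives with respect to $\vec m$ (a vector) and $\mathcal E$ organized consistently; the common factor $\partial_\specinte\psi$ pulls out of both force contributions before cancelling, so the result holds for an arbitrary smooth $\psi$ and requires no further assumption on the entropy beyond the stated functional form.
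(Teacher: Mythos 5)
Your proposal is correct: the chain-rule computation, the reduction to showing $\nabla_\state\Psi$ annihilates the force vector $[\,0,\ \vec f,\ \rho^{-1}\vec m\cdot\vec f\,]^{\transp}$, and the cancellation $-\rho^{-2}(\partial_\specinte\psi)\,\vec m\cdot\vec f + \rho^{-2}(\partial_\specinte\psi)\,\vec m\cdot\vec f = 0$ are all sound. The paper itself gives no inline proof---it defers to \cite[Lemma~2.1 and Remark~2.2]{eulerpoisson23}---and your argument is precisely the standard one that citation contains, so you have in effect supplied the details the paper omits, including the correct structural observation that the force's appearance as $\vec f$ in the momentum equation and as the power $\rho^{-1}\vec m\cdot\vec f$ in the energy equation is exactly matched by the $-|\vec m|^2/(2\rho^2)$ dependence of $\specinte$.
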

\begin{proof}
  See \cite[Lemma\,2.1 and Remark\,2.2]{eulerpoisson23}.
\end{proof}
As an immediate consequence of the above lemma, the compressible Euler
equations with forces maintain the same entropy and invariant domain structure
as the system of equations without forces $\vec f$ \cite{eulerpoisson23,
Dao2024}:
\begin{corollary}
  \label{cor:invariant_domain}%
  Let $\mathcal B$ be an \emph{invariant domain}~\cite{GuePo2016I} of the
  compressible Euler equations that is fully described in terms of density
  $\rho$ and specific internal energy $\specinte$ of a state $\state$, such
  as
  \begin{align*}
    \mathcal B_\ast :=
    \Big\{
      \state\in\mathbb{R}^{d+2}\,:\,
      \rho > 0,\;
      \specinte(\state) > 0,\;
      s(\rho,\specinte(\state)) \ge s_{\mathrm{min}}
    \Big\},
  \end{align*}
  where $s(\rho,\specinte(\state))$ denotes the specific entropy. Then, the
  compressible Euler equations with forces, as described in
  \eqref{eq:euler_force}  maintains the same invariant domain.
\end{corollary}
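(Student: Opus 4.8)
The plan is to reduce the claim to Lemma~\ref{lem:entropies_are_orthogonal} by observing that every quantity entering the description of $\mathcal B$ is of the admissible form $\Psi(\state)=\psi(\rho,\specinte(\state))$. Indeed, for the prototypical domain $\mathcal B_\ast$ the three defining quantities are the density itself (take $\psi(\rho,\specinte)=\rho$), the specific internal energy (take $\psi(\rho,\specinte)=\specinte$), and the specific entropy (take $\psi(\rho,\specinte)=s(\rho,\specinte)$). More generally, the hypothesis that $\mathcal B$ is \emph{fully described in terms of $\rho$ and $\specinte$} means precisely that its characteristic constraints are expressible through functions of this type, so the whole corollary will rest on the behavior of such $\Psi$ under the forced dynamics.

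First I would invoke Lemma~\ref{lem:entropies_are_orthogonal} for each admissible $\Psi$. The lemma guarantees that a smooth solution $\state(\vec x,t)$ of the forced system \eqref{eq:euler_force} satisfies the force-free evolution \eqref{eq:entropy_evolution}; equivalently, the force source term $\nabla_{\state}\Psi(\state)\cdot[\bzero,\vec f,\rho^{-1}\mom\cdot\vec f]^{\transp}$ vanishes identically for every $\psi(\rho,\specinte)$, which is exactly the orthogonality encoded in the lemma's name. Consequently, along smooth solutions the quantities $\rho$, $\specinte(\state)$, and $s(\rho,\specinte(\state))$ obey precisely the same transport equations as they would for the force-free compressible Euler equations, i.e.\ the case $\vec f=\bzero$.

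The final step I would carry out is the invariance conclusion itself. By assumption $\mathcal B$ is an invariant domain of the force-free Euler equations, and by hypothesis it is cut out entirely by constraints on the $\rho$-$\specinte$ quantities just discussed. Since those quantities evolve identically with and without $\vec f$, any mechanism that constrains them to respect the definition of $\mathcal B$ in the unforced case applies verbatim in the forced case; hence a solution of \eqref{eq:euler_force} issuing from $\mathcal B$ remains in $\mathcal B$ for as long as it stays smooth. In particular, for $\mathcal B_\ast$ the positivity $\rho>0$, the positivity $\specinte(\state)>0$, and the minimum entropy principle $s(\rho,\specinte(\state))\ge s_{\text{min}}$ are each propagated exactly as in the force-free setting.

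The main obstacle is conceptual rather than computational: it lies in making the phrase \emph{maintains the same invariant domain} rigorous at the level of the argument that establishes invariance without forces. The point to stress is that Lemma~\ref{lem:entropies_are_orthogonal} shows the flux operators acting on every $\rho$-$\specinte$ function coincide with those of the unforced system, so whatever device proves invariance in the force-free case---be it a material-derivative minimum principle for smooth flows, or a limiting Riemann-solver / convex-combination argument in the spirit of \cite{GuePo2016I}---transfers unchanged, since it only ever sees the force-free evolution \eqref{eq:entropy_evolution} of the defining quantities.
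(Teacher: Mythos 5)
Your proposal is correct and follows essentially the same route as the paper: the paper states this corollary as an immediate consequence of Lemma~\ref{lem:entropies_are_orthogonal}, precisely because every quantity defining $\mathcal B$ is of the form $\psi(\rho,\specinte(\state))$ and therefore evolves independently of the force $\vec f$. Your additional remarks on transferring the force-free invariance mechanism make explicit what the paper leaves implicit, but the underlying argument is the same.
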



\subsection{The Lorentz force}
\label{subse:lorentz}
We now assume that the fluid consists of a single electrically charged species;
for example, electrons with negative charge. The force $\vec f$ acting on the
fluid is then the \emph{Lorentz force} generated by an electric field $\vec
E(\vec x, t) \in \mathbb R^d$ and a magnetic flux density $\vec B(\vec x,t) \in
\mathbb R^{d_\ast}$:
\begin{align*}
  \vec f := \rho_{\mathrm e} \vec E + \rho_{\mathrm e} \vec v \times \vec B,
\end{align*}
where $\rho_{\mathrm e}(\vec x, t) := (q_{\mathrm{e}} / m_{\mathrm{e}})
\rho(\vec x, t)$ is the electric charge density, $q_{\mathrm{e}}$ is the
\emph{elemental charge}, and $m_{\mathrm{e}}$ the \emph{elemental mass} of the
species. For $d=3$ we set $d_\ast=3$, and, for $d=2$, we set $d_\ast=1$. For
$d=2$, by slight abuse of notation, we set the cross product to $\vec v \times
\vec B:=\big[v_2, -v_1\big]^T\,B$, which corresponds to a \emph{transverse
magnetic} configuration, where the fluid motion is confined to the 2D plane
spanned by $\hat{\vec e}_1$, $\hat{\vec e}_2$, and the magnetic field has only
a component in the $\hat{\vec e}_3$-direction.

The time evolution of the electric field $\vec E(\vec x, t)$ and the magnetic
flux density $\vec B(\vec x,t)$ is governed by Maxwell's equations, where the
electric charge density $\rho_{\mathrm e}(\vec x, t)$ and the electric current
density $(\rho_{\mathrm e}\vec v)(\vec x, t)$ enter as forcing terms
\cite{Bitten2004}. We make the assumption that the magnetic flux density is
constant in time, i.\,e., $\partial_t \vec B = \vec 0$, which implies by that
$\nabla \times \vec E = \vec 0$. From this and the assumption that $\domain$ is
simply connected, $\vec E = -\nabla \varphi_{\mathrm e}$ for some electric
potential $\varphi_{\mathrm e}(\vec x, t)$. Now, we introduce rescaled
quantities for the magnetic flux density and the electric potential, and we
define a coupling constant $\alpha$ \cite{eulerpoisson23},
\begin{align*}
  \varphi := \frac{q_{\mathrm e}}{m_{\mathrm e}}\varphi_{\mathrm e},
  \qquad
  \vec \Omega := \frac{q_{\mathrm e}}{m_{\mathrm e}}\vec B,
  \qquad
  \alpha := \frac{1}{\varepsilon_0}
  \frac{q_{\mathrm e}^2}{m_{\mathrm e}^2} \,>\, 0.
\end{align*}
Here, $\varepsilon_0$ denotes vacuum permittivity \cite{Chen1984}.
The definition of the parameter $\alpha$ motivates rewriting the PDE for the
electrostatic potential as:
\begin{align*}
  \partial_t\big(-\Delta\varphi\big) &= -\alpha\,\nabla\cdot\vec m.
\end{align*}
In summary, we consider the following Euler-Poisson model with a given
magnetic field:
\begin{definition}[Magnetic Euler-Poisson equations]
  Given a rescaled, static, divergence free magnetic flux density
  $\vec\Omega(\vec x)\in\mathbb{R}^{d_\ast}$, and initial data for the
  state $\state(\vec x,t)=[\rho,\vec m,\mathcal E]^T$ and the potential
  $\varphi(\vec x, t)$ satisfying the compatibility condition
  \begin{align}
    \label{eq:poisson_compatibility}
    -\Delta\varphi(\boldsymbol x, 0) = \alpha\,\rho(\boldsymbol x, 0),
  \end{align}
  find a solution to the evolution equation
  \begin{align}
    \label{eq:magnetic_euler_poisson}
    \begin{cases}
      \begin{aligned}
        \partial_t \rho + \nabla \cdot \vec m
        &= 0,
        \\[0.5em]
        \partial_t \vec m + \nabla \cdot (\rho^{-1}\vec
        m\vec m^{\transp} + \mathbb I p)
        &= -\rho\nabla\varphi + \vec m \times \vec\Omega,
        \\[0.5em]
        \partial_t \mathcal E + \nabla \cdot \big(\rho^{-1}\vec m(\mathcal E
        + p)\big) & = -\vec m \cdot \nabla\varphi,
        \\[0.5em]
        \partial_t(-\Delta\varphi) &= - \alpha\,\nabla \cdot \vec m,
      \end{aligned}
    \end{cases}
  \end{align}
  subject to appropriately chosen boundary conditions.
\end{definition}
Note that system \eqref{eq:magnetic_euler_poisson} has to be closed with an
\emph{equation of state} that determines the pressure $p(\vec
u)=p(\rho,\specinte(\vec u))$ as a function of the hydrodynamical state
$\state$. In the case of a barotropic closure, where the pressure depends only
on the density, the energy equation $\mathcal E$ is no longer necessary
\cite{FeireislBook}. The barotropic version of
\eqref{eq:magnetic_euler_poisson} is discussed with more detail in
Appendix~\ref{AppBaro}.

A number of remarks are in order:
\begin{remark}[Gauß law]
  \label{lem:gauss_law}%
  The magnetic Euler-Poisson equations \eqref{eq:magnetic_euler_poisson}
  formally maintain the Gauß law, i.\,e., if
\eqref{eq:poisson_compatibility}
  is maintained at the initial time, it holds true for all time.
More precisely, substituting  the first equation of
\eqref{eq:magnetic_euler_poisson} into  the fourth equation, we observe that
  \begin{align*}
    \partial_t(-\Delta\varphi - \alpha\rho)=0.
  \end{align*}
\end{remark}

\begin{lemma}[Energy balance]
  \label{lem:energy_balance}%
  The magnetic Euler-Poisson equations \eqref{eq:magnetic_euler_poisson}
  admit the following formal energy balance:
  \begin{multline}
    \label{eq:energy_balance}
    \frac{\mathrm d}{\mathrm dt}\int_\domain \mathcal E +
    \frac{1}{2\alpha}|\nabla\varphi|_{\ell^2}^2\,\mathrm dx
    \\
    +
    \int_{\bdry} \Big\{\rho^{-1}\vec m\big(\mathcal E + p\big) +
    \varphi\big(\vec m - \alpha^{-1}\partial_t \nabla \varphi\big)
    \Big\}\cdot\vec n\,\mathrm
    do_x
    = 0,
  \end{multline}
  where $\vec n$ denotes the outwards pointing unit normal on $\bdry$.
\end{lemma}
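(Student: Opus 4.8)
The plan is to integrate the energy equation over $\domain$, separately compute the rate of change of the electrostatic field energy, and then add the two so that all interior (volume) contributions cancel and only boundary fluxes remain.

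First I would integrate the third equation of \eqref{eq:magnetic_euler_poisson} over $\domain$ and apply the divergence theorem to the transport flux, which gives
\begin{equation*}
  \frac{\mathrm d}{\mathrm dt}\int_\domain \mathcal E\,\mathrm dx
  + \int_{\bdry}\rho^{-1}\vec m(\mathcal E + p)\cdot\vec n\,\mathrm do_x
  = -\int_\domain \vec m\cdot\nabla\varphi\,\mathrm dx .
\end{equation*}
It is essential here that the magnetic part of the Lorentz force performs no work, since $\vec m\cdot(\vec m\times\vec\Omega)=0$ (which also holds under the two-dimensional convention fixed in Subsection~\ref{subse:lorentz}); this is why the right-hand side of the energy equation in \eqref{eq:magnetic_euler_poisson} already reduces to the pure electric term $-\vec m\cdot\nabla\varphi$.

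Next I would differentiate the electrostatic energy and rewrite it via two integrations by parts, using the fourth equation of \eqref{eq:magnetic_euler_poisson} in the form $\partial_t\Delta\varphi = \alpha\,\nabla\cdot\vec m$. Starting from $\tfrac{\mathrm d}{\mathrm dt}\int_\domain \tfrac{1}{2\alpha}|\nabla\varphi|_{\ell^2}^2\,\mathrm dx = \tfrac1\alpha\int_\domain\nabla\varphi\cdot\partial_t\nabla\varphi\,\mathrm dx$, a first integration by parts exposes $\partial_t\Delta\varphi$ and lets me substitute the Poisson evolution equation; a second integration by parts then converts the resulting $-\int_\domain\varphi\,\nabla\cdot\vec m\,\mathrm dx$ into an interior term plus a boundary flux, yielding
\begin{equation*}
  \frac{\mathrm d}{\mathrm dt}\int_\domain \frac{1}{2\alpha}|\nabla\varphi|_{\ell^2}^2\,\mathrm dx
  = \int_\domain \nabla\varphi\cdot\vec m\,\mathrm dx
  + \int_{\bdry}\big(\alpha^{-1}\varphi\,\partial_t\nabla\varphi - \varphi\,\vec m\big)\cdot\vec n\,\mathrm do_x .
\end{equation*}

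Finally I would add the two identities. The interior terms $-\int_\domain\vec m\cdot\nabla\varphi\,\mathrm dx$ and $+\int_\domain\nabla\varphi\cdot\vec m\,\mathrm dx$ cancel exactly, and collecting the remaining boundary fluxes into the combination $\varphi(\vec m - \alpha^{-1}\partial_t\nabla\varphi)$ reproduces \eqref{eq:energy_balance}. Everything is formal, in the sense that $\state$ and $\varphi$ are assumed smooth enough to differentiate under the integral and to justify the integrations by parts; no boundary conditions are imposed, so the boundary fluxes are simply carried along. I do not anticipate a genuine obstacle: the only delicate bookkeeping is tracking the signs through the two integrations by parts and correctly substituting $\partial_t\Delta\varphi = \alpha\,\nabla\cdot\vec m$. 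The conceptual heart of the argument is that the electric work term in the energy equation is exactly cancelled, modulo boundary terms, by the interior contribution produced when differentiating the field energy and closing it through mass conservation coupled via the Poisson equation.
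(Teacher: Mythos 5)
Your proof is correct and follows essentially the same route as the paper: the paper tests the fourth equation with $\alpha^{-1}\varphi$ and integrates by parts (algebraically identical to your differentiation of the field energy followed by two integrations by parts), then integrates the energy equation in space and adds, so that the interior work terms $\pm\int_\domain \vec m\cdot\nabla\varphi\,\mathrm dx$ cancel and only the stated boundary fluxes survive. Your added remark that $\vec m\cdot(\vec m\times\vec\Omega)=0$ is already built into the energy equation of \eqref{eq:magnetic_euler_poisson} and is not needed as a separate step.
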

\begin{proof}
  Test the last equation of \eqref{eq:magnetic_euler_poisson} with
  $\alpha^{-1}\varphi$ and integrate by parts:
  \begin{equation*}
    \frac{\mathrm d }{\mathrm dt}\int_\domain
    \frac{1}{2\alpha}|\nabla\varphi|_{\ell^2}^2\,\mathrm dx +
    \int_{\bdry} \varphi\big(\vec m -
    \alpha^{-1}\partial_t\nabla\varphi\big)\cdot {\vec n}
    \,\mathrm{d}o_x = \int_\domain \vec m \cdot \nabla \varphi \,\mathrm dx.
  \end{equation*}
  Then, integrate the third equation of \eqref{eq:magnetic_euler_poisson} in
  space, use the Gauß theorem and add the result to the previous equation.
\end{proof}

From \eqref{eq:energy_balance} we note that energy conservation only holds
whenever boundary terms vanish or their effects cancel out. For instance,
this may occur whenever we consider boundary conditions $\mom \cdot \normal
= 0$ and $\nabla\varphi \cdot \normal = 0$, or periodic boundary
conditions. The energy balance for the barotropic case is briefly discussed
in Appendix \ref{AppBaro}.


\subsection{Time scales of the magnetic Euler-Poisson equations}
\label{subse:time_scales}
Special care must be taken when discretizing system
\eqref{eq:magnetic_euler_poisson} due the possible presence of highly
disparate time scales \cite{eulerpoisson21, eulerpoisson22,
eulerpoisson23}. With the goal that the time-step size shall only be
subject to a \emph{hyperbolic CFL} condition from the Euler subsystem
\cite{eulerpoisson23}, it is crucial to design the approximation technique such
that faster scales can be overstepped safely. System
\eqref{eq:magnetic_euler_poisson} contains three time scales.

\paragraph{Hyperbolic time scale $T_{\mathrm{h}}$}
The hydrodynamical subsystem is obtained by removing $\varphi$ and
$\vec\Omega$ from \eqref{eq:magnetic_euler_poisson} resulting in the
compressible Euler equations without a force. This system is---due to the
lack of a viscosity term--- \emph{scale free}. Nevertheless, heuristically,
we wish to define a hyperbolic time scale $T_{\mathrm{h}}$ as the time scale
on which the hyperbolic state $\state$ undergoes significant changes.
Concretely, for the fully discrete approximation scheme we can identify
such a time scale $T_{\mathrm{h}}$ by identifying it with the maximal
admissible time-step size $\tau_n$ given by a hyperbolic CFL condition for
the $n$th time step; see Equation~\eqref{CflFormula}:
\begin{align*}
  T_{\mathrm{h}}\,:=\,\tau_n.
\end{align*}
With this definition, we will automatically fully resolve the hyperbolic time
scale in our numerical computations.

\paragraph{Plasma oscillation time scale
$T_{\mathrm{p}}=\omega_{\mathrm{p}}^{-1}$}
Another frequency regime present in the \eqref{eq:magnetic_euler_poisson}
is the one of plasma oscillations \cite{Bitten2004, Goed2004}. We can
uncover this regime by setting $\vec\Omega$ and the hyperbolic flux to zero
in \eqref{eq:magnetic_euler_poisson} which yields a limiting equation
coupling potential $\varphi$ and momentum $\vec m$ \cite{eulerpoisson23}:
\begin{align*}
  \begin{cases}
    \begin{aligned}
      \partial_t \vec m &= -\rho\nabla\varphi,
      \\[0.5em]
      \partial_t(-\Delta\varphi) &= - \alpha\,\nabla \cdot \vec m.
    \end{aligned}
  \end{cases}
\end{align*}
Now, taking the divergence of the first equation and the time
derivative of the second, and assuming that the gradient in the density
$\rho$ is sufficiently small, we arrive at
\begin{align*}
  \Delta\big(\partial_{tt}\varphi + \omega_{\mathrm{p}}^2\varphi\big)
  = 0,
  \qquad
  \omega_{\mathrm{p}} \,:=\, \sqrt{\rho\alpha}
  \;=\; \sqrt{\frac{\rho\,q_{\mathrm{e}}^2}{\varepsilon_0m_{\mathrm{e}}^2}}.
\end{align*}
Consequently, the potential $\varphi$ (and with it the momentum $\vec m$
and the density $\rho$) can exhibit instabilities governed by a simple
harmonic oscillator equation. We note that the plasma frequency
$\omega_{\mathrm{p}}$ can be very large. For most high-energy density
applications $\omega_{\mathrm{p}}$ typically takes values in the GHz regime;
see for instance \cite[p. 12]{Bitten2004} and \cite[p. 56]{Goed2004}.

\paragraph{Cyclotron and diocotron time scales
$T_{\mathrm{c}}=\omega_{\mathrm{c}}^{-1}$,
$T_{\mathrm{d}}=\omega_{\mathrm{d}}^{-1}$}
Another set of critical time scales are related to the presence of the magnetic
field $\vec B$. Introducing the \emph{cyclotron frequency}
$\omega_{\mathrm{c}}:=|\vec\Omega|_{\ell^2}=\tfrac{q_{\mathrm e}}{m_{\mathrm
e}}|\vec B|_{\ell^2}$ we can identify another subsystem of
\eqref{eq:magnetic_euler_poisson} by formally setting the potential $\varphi$
and the hyperbolic flux to zero, viz.,
\begin{align*}
  \partial_t\vec m = \vec m \times \vec\Omega.
\end{align*}
This limiting equation is equivalent to the equation of motion of isolated
charged particles subject to a static magnetic field. We note that, in the
context of cold low-density plasma physics applications, times scales usually
follow the order $T_{h}^{-1}\ll\omega_{\mathrm{p}}\ll\omega_{\mathrm{c}}$
see \cite{eulerpoisson22, Bitten2004}. For instance, the cyclotron
frequency for isolated electrons in a magnetic field of around 1 Tesla,  is
approximately 180 GHz \cite{Bitten2004,Goed2004}. Contrary to the
plasma frequency, the cyclotron frequency does not depend on the
density.

We note that, crucially, while the cyclotron frequency $\omega_{\mathrm{c}}$
might become very large, the cyclotron motions might not necessarily manifest
in the macroscopic dynamics of the fluid. In particular, this is true for the
so-called \emph{magnetic-drift} or \emph{guiding center} limit
\cite{Degond2009, Bostan2007, eulerpoisson22}. We derive the drift limit by
assuming that inertial terms, temperature, and pressure are negligible,
meaning: $\partial_t \mom + \diver(\rho^{-1}\mom \mom^\transp) \approx \bzero$
and $\nabla p \approx \bzero$ in \eqref{eq:magnetic_euler_poisson}, which in
turn implies $-\rho\nabla\varphi + \vec m \times \vec\Omega = \vec 0$. Under
the transverse magnetic assumption, $\nabla \varphi \perp \boldsymbol \Omega$,
we solve this algebraic condition by introducing a \emph{drift velocity}
\cite[Chapter 3]{Chen1984} that depends on the prescribed magnetic field and
the potential:
\begin{align}
  \label{eq:magnetic_drift_velocity}
  \vec v_{\mathrm{dr}}\,:=\,-\frac{\nabla\varphi\times\vec\Omega}
  {|\vec\Omega|^2_{\ell^2}}.
\end{align}
System \eqref{eq:magnetic_euler_poisson} then reduces to a coupled system
of density $\rho$ and potential $\varphi$:
\begin{align}
  \label{eq:magnetic_drift_limit}
  \begin{cases}
    \begin{aligned}
      \partial_t \rho + \nabla \cdot (\rho\,\vec v_{\mathrm{dr}}) &= 0,
      \\[0.5em]
      \partial_t(-\Delta\varphi) &= - \alpha\,\nabla \cdot \vec (\rho\,\vec
      v_{\mathrm{dr}}).
    \end{aligned}
  \end{cases}
\end{align}
Substituting \eqref{eq:magnetic_drift_velocity} into the second equation of
\eqref{eq:magnetic_drift_limit}, and performing a dimensional analysis, we can
identify a time scale for the dynamics. The resulting time scale is determined
by the \emph{diocotron frequency}:
\begin{align*}
  T_{\mathrm{d}}=\omega_{\mathrm{d}}^{-1},\qquad
  \omega_{\mathrm{d}}\,:=\,\frac{\rho\,\alpha}{|\vec\Omega|_{\ell^2}}
  \,=\,\frac{\omega_{\mathrm{p}}^2}{\omega_{\mathrm{c}}}.
\end{align*}
The magnetic drift-limit model \eqref{eq:magnetic_drift_limit} is, in itself,
relatively benign as it is dominated by the slow time-scale $T_{\mathrm{d}}$ and
does not contain the fast plasma and cyclotron frequencies. However, designing
schemes capable of solving the full magnetic Euler-Poisson system
\eqref{eq:magnetic_euler_poisson} that contains the fast scales in a fluid
regime where inertial and thermal effects are negligible is non-trivial
endeavour \cite{Degond2009, Crockatt2021, Crockat2023}. In Section
\ref{sec:numerical} we will present numerical simulations where the time scales
$1\sim T_{h}^{-1}\sim\omega_{\mathrm{d}}\ll\omega_{\mathrm{p}}
\ll\omega_{\mathrm{c}}\sim10^{12}$ span 12 orders of magnitude.


\section{Operator splitting and PDE Schur complement}
\label{sec:time-stepping}%

We now discuss our strategy for approximating solutions to
\eqref{eq:magnetic_euler_poisson} that maintain a discrete counterpart of the
structural properties outlined in
Lemmas~\ref{lem:entropies_are_orthogonal}, \ref{lem:gauss_law},
\ref{lem:energy_balance}, and Corollary~\ref{cor:invariant_domain}.
Following the general approach outlined in \cite{eulerpoisson23} we first
perform an operator split of \eqref{eq:magnetic_euler_poisson} into a
hyperbolic operator and a \emph{source update}, and then condense the
resulting system on semi-discrete level with a PDE Schur complement
\cite{eulerpoisson23}.

\subsection{Operator splitting}
The discussion outlined in Section~\ref{sec:preliminaries} suggests to
treat the hydrodynamical subsystem of \eqref{eq:magnetic_euler_poisson}
separately from the update induced by the Lorentz force. We thus proceed to
approximate solutions to \eqref{eq:magnetic_euler_poisson} by an operator
splitting approach. Concretely, we intend to use a second-order
\emph{Strang} split (see \cite{Strang1968}; see also
\cite[Ch.~5]{Quarteroni1994} for an overview of operator-splitting
methods), or some form of high-order IMEX split \cite{Ern_Guermond_2022,
Ern_Guermond_2023}.

The \emph{hyperbolic subsystem} is recovered from
\eqref{eq:magnetic_euler_poisson} by taking the formal limit
$\varepsilon:=q_{\mathrm{e}}/m_{\mathrm{e}}\to0$:
\begin{align}
  \label{eq:hyperbolic_operator}
  \begin{cases}
    \begin{aligned}
      \partial_t \rho + \nabla \cdot \vec m
      &= 0,
      \\[0.5em]
      \partial_t \vec m + \nabla \cdot (\rho^{-1}\vec
      m\vec m^{\transp} + \mathbb I p)
      &= \vec 0,
      \\[0.5em]
      \partial_t \mathcal E + \nabla \cdot \big(\rho^{-1}\vec m(\mathcal E
      + p)\big) & = 0. \\
     \partial_t \Delta\Epot & = 0
    \end{aligned}
  \end{cases}
\end{align}
Conversely, the \emph{source update subsystem} is recovered from
\eqref{eq:magnetic_euler_poisson} by taking the opposite limit,
$\varepsilon:=(q_{\mathrm{e}}/m_{\mathrm{e}})^{-1}\to0$ and renormalizing the
timescale with $t=\mathcal{O}(\varepsilon)$:
\begin{align}
  \label{eq:source_update_operator}
  \begin{cases}
    \begin{aligned}
     \partial_t \rho & = 0 \\
      \partial_t \vec m &= -\rho\nabla\varphi + \vec m \times \vec\Omega,
      \\[0.5em]
      \partial_t \mathcal E & = -\vec m \cdot \nabla\varphi,
      \\[0.5em]
      \partial_t(-\Delta\varphi) &= - \alpha\,\nabla \cdot \vec m.
    \end{aligned}
  \end{cases}
\end{align}
With both subsystems at hand Strang splitting takes the following form:
\begin{definition}[Strang splitting]
  \label{def:strang_splitting}
  Given a state $\vec u^n =\big[\rho^n,\mom^n,\totme^n\big]$ and potential
  $\Epot^n$ at time $t_n$, compute an update as follows:
  \begin{itemize}
    \item[--] \textit{First fractional step (from $t_n$ to
      $t_{n+1/2}=t_n+\tau_n/2$).}  Solve the hyperbolic system
      \eqref{eq:hyperbolic_operator} with extended  initial state
      $\big\{\vec u^n, \varphi^n\big\}$ resulting in a time-step  size
      $\tau_n/2$ and an updated state $\big\{\hat{\vec u},
      \hat\varphi\big\}$ corresponding to pseudo-time $t_{n+1/2}$.
    \item[--] \textit{Second fractional step (from $t_n$ to
      $t_{n+1}=t_n+\tau_n$).} Then, solve the source update system
      \eqref{eq:source_update_operator} with initial state $\big\{\hat{\vec
      u}, \hat\varphi\big\}$ and fixed time-step size $\tau_n$ resulting in
      an updated state $\big\{\breve{\vec u}, \breve\varphi\big\}$.
    \item[--] \textit{Third fractional step (from $t_{n+1/2}$ to
      $t_{n+1}$).} We solve the hyperbolic system
      \eqref{eq:source_update_operator} a second time with initial state
      $\big\{\breve{\vec u}, \breve\varphi\big\}$ and fixed time-step size
      $\tau_n/2$ resulting in a final update $\big\{\vec u^{n+1},
      \varphi^{n+1}\big\}$ for time $t_{n+1}$.
  \end{itemize}
\end{definition}
We note here that the first fractional step, invoking the Euler solver,
determines the largest permissible $\tau_n$ compatible with the CFL
condition. Then, the second and third fractional steps use the same value
of $\tau_n$. We refer the reader to \cite{GuermondNavier2021,
eulerpoisson23, Dao2024} for a detailed discussion on the use of Strang
splitting for related hyperbolic systems.

The methodology developed in this paper can in principle be used with a
variety of different numerical discretization techniques for the Euler
subsystem \eqref{eq:hyperbolic_operator}. In the following we simply make
the assumption that the choice of discretization and solver for
\eqref{eq:hyperbolic_operator} maintains conservation, admissibility and a
suitable invariant domain on the fully discrete level; see
Section~\ref{sec:assumpHyp} and Appendix~\ref{sec:HypSolver}. We focus in
detail on the discretization and structural properties of
\eqref{eq:source_update_operator}.

\subsection{A PDE Schur complement}\label{sec:PDESchur}
We now discuss how to discretize \eqref{eq:source_update_operator} in time.
We first observe that, since $\partial_t\rho=0$, system
\eqref{eq:source_update_operator} essentially reduces to a coupled system
for the velocity $\vec v$ and the potential $\varphi$,
\begin{align}
  \label{eq:source_update_subsystem}
  \begin{cases}
    \begin{aligned}
      \partial_t \vec v &= -\nabla\varphi + \vec v \times \vec\Omega,
      \\[0.5em]
      \partial_t(-\Delta\varphi) &= - \alpha\,\nabla \cdot (\rho\vec v).
    \end{aligned}
  \end{cases}
\end{align}
The rate of change of the total mechanical energy $\mathcal{E}$ is then
determined by
\begin{align}
  \label{eq:source_update_energy}
  \partial_t\mathcal{E}=-\rho\vec v\cdot\nabla\varphi
\end{align}
and simply equates to the rate of change of the kinetic energy. In order to
accommodate different IMEX time-stepping techniques, we now discretize
\eqref{eq:source_update_subsystem} in time with a $\theta$-scheme, where
$\theta=1$ recovers a backward Euler step and $\theta=1/2$ is the
Crank-Nicolson scheme.
\begin{align}
  \label{eq:semi_discrete_theta_scheme}
  \begin{cases}
    \begin{aligned}
      \vec v^{n+\theta} &= \vec v^n - \theta\,\tau_n\nabla \varphi^{n+\theta}
      + \theta\,\tau_n \vec v^{n+\theta} \times \vec \Omega,
      \\[0.5em]
      -\Delta\varphi^{n+\theta} & = -\Delta\varphi^n -
      \theta\,\tau_n\alpha\,\nabla\cdot(\rho^n\vec v^{n+\theta}),
      \\[0.5em]
      \vec v^{n+1} &= \frac{1}{\theta}\,\vec v^{n+\theta} -
      \frac{1-\theta}{\theta}\,\vec v^{n},
      \\[0.5em]
      \varphi^{n+1} &= \frac{1}{\theta}\,\varphi^{n+\theta} -
      \frac{1-\theta}{\theta}\,\varphi^{n}.
    \end{aligned}
  \end{cases}
\end{align}
Note that the first two lines of \eqref{eq:semi_discrete_theta_scheme}
correspond with a coupled system, while the last two lines are just an
extrapolation step.

\begin{lemma}[Semi-discrete parabolic energy balance]
  \label{lem:semi_discrete_parabolic_energy_balance}%
  The semi-discrete system \eqref{eq:semi_discrete_theta_scheme} admits an
  energy balance:
  \begin{multline}
    \frac1{2\alpha}\int_\domain\alpha\rho^n|\vec v^{n+1}|_{\ell^2}^2
    +|\nabla\varphi^{n+1}|_{\ell^2}^2\,\mathrm{d}x
    \\
    +\big(\theta-\frac12\big)\,
    \frac1{2\alpha}\int_\domain\alpha\rho^n|\vec v^{n+1} - \vec
v^{n}|_{\ell^2}^2
    +|\nabla\varphi^{n+1} - \nabla\varphi^{n}|_{\ell^2}^2\,\mathrm{d}x
    \\
    = \frac1{2\alpha}\int_\domain\alpha\rho^n|\vec v^{n}|_{\ell^2}^2
    +|\nabla\varphi^{n}|_{\ell^2}^2\,\mathrm{d}x
    \\
    +\tau_n\int_{\bdry}
    \Big\{\frac{1}{\alpha}\frac{\nabla\varphi^{n+\theta} -
    \nabla\varphi^{n}}{\theta\tau_n}
    \,-\,\rho^n\vec v^{n+\theta}\Big\}\cdot\vec
    n_{\bdry}\,\varphi^{n+\theta}\,\mathrm{d}o_x.
  \end{multline}
\end{lemma}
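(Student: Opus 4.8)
The plan is to reproduce the proof of the continuous energy balance in Lemma~\ref{lem:energy_balance} on the semi-discrete level, by testing the two coupled equations of \eqref{eq:semi_discrete_theta_scheme} against their natural energy weights and then exploiting two cancellations. First I would test the momentum equation with $\rho^n\vec v^{n+\theta}$ and integrate over $\domain$. The Lorentz/cyclotron contribution drops out pointwise because $\vec v^{n+\theta}\cdot(\vec v^{n+\theta}\times\vec\Omega)=0$, so that $\int_\domain\rho^n\vec v^{n+\theta}\cdot(\vec v^{n+\theta}-\vec v^n)\,\mathrm dx = -\theta\tau_n\int_\domain\rho^n\vec v^{n+\theta}\cdot\nabla\varphi^{n+\theta}\,\mathrm dx$. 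Next I would rewrite the Poisson step as $-\Delta(\varphi^{n+\theta}-\varphi^n)=-\theta\tau_n\alpha\,\nabla\cdot(\rho^n\vec v^{n+\theta})$, test it with $\alpha^{-1}\varphi^{n+\theta}$, and integrate by parts on both sides. This produces, on the left, the volume term $\alpha^{-1}\int_\domain\nabla(\varphi^{n+\theta}-\varphi^n)\cdot\nabla\varphi^{n+\theta}\,\mathrm dx$ and the surface term $-\alpha^{-1}\int_{\bdry}\nabla(\varphi^{n+\theta}-\varphi^n)\cdot\vec n\,\varphi^{n+\theta}\,\mathrm do_x$, and on the right the volume term $\theta\tau_n\int_\domain\rho^n\vec v^{n+\theta}\cdot\nabla\varphi^{n+\theta}\,\mathrm dx$ together with $-\theta\tau_n\int_{\bdry}\rho^n\vec v^{n+\theta}\cdot\vec n\,\varphi^{n+\theta}\,\mathrm do_x$.

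The crucial step is that the two volume coupling integrals $\int_\domain\rho^n\vec v^{n+\theta}\cdot\nabla\varphi^{n+\theta}\,\mathrm dx$ cancel when the two tested identities are combined: substituting the momentum identity eliminates this term entirely, leaving only the two quadratic volume terms and the boundary fluxes. This is the discrete analogue of the exact cancellation of the electric work term that makes the energy balance close in Lemma~\ref{lem:energy_balance}. I would then collect the two surface integrals into the single bracketed flux $\{\alpha^{-1}(\theta\tau_n)^{-1}(\nabla\varphi^{n+\theta}-\nabla\varphi^n)-\rho^n\vec v^{n+\theta}\}\cdot\vec n$; factoring out the common $\theta\tau_n$ reproduces precisely the boundary term in the statement.

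It remains to convert the $n+\theta$ quantities in the volume terms to the $t_n$ and $t_{n+1}$ levels. Using the extrapolation step of \eqref{eq:semi_discrete_theta_scheme}, which is equivalent to $\vec v^{n+\theta}=\theta\vec v^{n+1}+(1-\theta)\vec v^n$ and $\varphi^{n+\theta}=\theta\varphi^{n+1}+(1-\theta)\varphi^n$, together with the $\theta$-scheme polarization identity
\[
  a^{n+\theta}\cdot(a^{n+\theta}-a^n)=\theta\Big[\tfrac12\big(|a^{n+1}|^2-|a^n|^2\big)+\big(\theta-\tfrac12\big)|a^{n+1}-a^n|^2\Big],
\]
applied to $a=\vec v$ (weighted by $\rho^n$) and to $a=\nabla\varphi$ (weighted by $\alpha^{-1}$), the two quadratic volume terms become the difference of the discrete energies at $t_{n+1}$ and $t_n$ plus an off-centering term proportional to $(\theta-\tfrac12)$. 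Dividing the resulting identity by $\theta$ and rearranging then yields the claimed balance.

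I expect the main obstacle to be bookkeeping rather than any conceptual difficulty: one must keep the factors of $\theta$, $\tau_n$ and $\alpha$ consistent through the two integration-by-parts steps, and invoke the $\theta$-scheme polarization identity so that the off-centering term emerges with the correct weight. The non-homogeneous boundary conditions add a layer of care, since the surface integrals do not vanish and must be assembled exactly into the stated flux rather than discarded.
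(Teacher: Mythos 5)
Your proposal is correct and follows essentially the same route as the paper: test the velocity update with $\rho^n\vec v^{n+\theta}$ (the paper uses the weight $\theta^{-1}\rho^n\vec v^{n+\theta}$, absorbing the $\theta^{-1}$ you introduce only at the end) and the Poisson update with $(\alpha\theta)^{-1}\varphi^{n+\theta}$, integrate by parts so the coupling terms cancel and the boundary fluxes assemble, then invoke exactly the paper's polarization identity $\frac{1}{\theta}\big(\theta a+(1-\theta)b\,,\,\theta a+(1-\theta)b-b\big)=\frac{1}{2}\big(|a|^2-|b|^2\big)+\big(\theta-\frac{1}{2}\big)|a-b|^2$. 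Your write-up is in fact more explicit than the paper's proof about the pointwise vanishing of the cyclotron term and the assembly of the boundary flux, but the argument is the same.
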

\begin{proof}
  Test the first equation of \eqref{eq:semi_discrete_theta_scheme} with
  $\theta^{-1}\varphi^n\vec v^{n+\theta}$ and the second equation with
  $(\alpha\theta)^{-1}\varphi^{n+\theta}$. Then, the identity follows by
  integration by parts and observing that the following algebraic
  \emph{polarization} identity holds true:
  \begin{align*}
    \frac{1}{\theta}\big(\theta a+(1-\theta)b\,,\,\theta
    a+(1-\theta)b-b\big)\,=\,\frac{1}{2}\big(|a|^2-|b|^2\big)
    +\big(\theta-\frac{1}{2}\big)|a-b|^2.
  \end{align*}
\end{proof}
Following the general principle discussed in
\cite[Sec.~3.2]{eulerpoisson23}, we now transform the coupled system
\eqref{eq:semi_discrete_theta_scheme} into a lower triangular form that
allows us to first solve for the new potential $\varphi^{n+\theta}$ and
then compute the new velocity field $\vec v^{n+\theta}$:
\begin{align}
  \label{eq:semi_discrete_theta_scheme_condensed}
  \begin{cases}
    \begin{aligned}
      -\Delta\varphi^{n+\theta} - \theta^2\tau_n^2\alpha\, \nabla \cdot
      \left(\rho^n\mathcal B_{n,\theta}^{-1}\nabla\varphi^{n+\theta}\right)
      &=
      -\Delta\varphi^n - \theta\tau_n\alpha\,\nabla\cdot\left(\rho^n\mathcal
      B_{n,\theta}^{-1} \vec v^n\right),
      \\[0.5em]
      \vec v^{n+\theta} & = \mathcal B_{n,\theta}^{-1}\left(\vec v^n -
      \theta\tau_n\,\nabla \varphi^{n+\theta}\right),
    \end{aligned}
  \end{cases}
\end{align}
where $\mathcal B_{n,\theta}:\mathbb{R}^d \rightarrow \mathbb{R}^d$ is
defined by
\begin{align}
  \label{eq:definition_bntheta}
  \mathcal B_{n,\theta} \vec v
  \,:=\,
  \vec v - \theta\,\tau_n\,\vec v \times \vec \Omega \ \ \text{for } \vec v\in
\mathbb{R}^d.
\end{align}
Note that $\det(\mathcal{B}_{n,\theta}) =
1+\theta^2\tau_n^2|\vec\Omega|_{\ell^2}^2 =
1+\theta^2\tau_n^2\omega_{\mathrm{c}}^2$,
moreover, the inverse can be written as
\begin{align}
  \label{eq:B_inverse}
  \mathcal{B}_{n,\theta}^{-1}\vec v = \frac{\vec v + \theta\tau_n\vec
  v\times\vec\Omega + \theta^2\tau_n^2(\vec v\cdot\vec\Omega)\vec \Omega}
  {1+\theta^2\tau_n^2|\vec \Omega|^2_{\ell}}.
\end{align}
We note in passing that formulation
\eqref{eq:semi_discrete_theta_scheme_condensed}-\eqref{eq:definition_bntheta}
is closely related to what is usually known as static condensation in the
finite element literature \cite{eulerpoisson22}. Here, we set $\vec
v\cdot\vec\Omega=0$ in the 2D case ($d=2$), consistent with our convention of
using a transverse magnetic configuration. We make the following observation
for the bilinear form arising from the weak formulation of
\eqref{eq:semi_discrete_theta_scheme_condensed}.
\begin{lemma}[Well posedness]\label{EllipticWellPosed}
  Suppose that $\rho^n\in L^\infty(\domain)$. Then, the bilinear form
  \begin{align*}
    a_{n,\theta}(\varphi,\psi)\,:=\,
    \big(\nabla\varphi,\nabla\psi)_{\Ltwod}+ \theta^2\tau_n^2\alpha
    \big(\rho^n\mathcal{B}_{n,\theta}^{-1}\nabla\varphi,
    \nabla\psi)_{\Ltwod}
  \end{align*}
  is coercive and bounded on $H_0^1(\domain)$; and the
  boundedness constant  $C$ formally scales with
  $C=\mathcal{O}(\tau_n^2\omega_{\mathrm{p}}^2)$.
\end{lemma}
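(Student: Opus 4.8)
The plan is to treat boundedness and coercivity separately, and the observation on which both rest is that the cross-product operator $\vec v\mapsto\vec v\times\vec\Omega$ is skew-symmetric, so its contribution drops out of the associated quadratic form. Writing $c:=\theta\tau_n\omega_{\text{c}}$, I would first record the pointwise identity, obtained from the explicit formula for $\mathcal{B}_{n,\theta}^{-1}$ together with the vanishing of the scalar triple product $(\vec v\times\vec\Omega)\cdot\vec v=0$,
\begin{align*}
  (\mathcal{B}_{n,\theta}^{-1}\vec v)\cdot\vec v
  = \frac{|\vec v|_{\ell^2}^2 + \theta^2\tau_n^2(\vec v\cdot\vec\Omega)^2}
         {1+\theta^2\tau_n^2|\vec\Omega|_{\ell^2}^2}
  \;\ge\; \frac{|\vec v|_{\ell^2}^2}{1+c^2} \;>\; 0 .
\end{align*}
In the two-dimensional case the convention $\vec v\cdot\vec\Omega=0$ merely removes the second numerator term. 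This establishes that $\mathcal{B}_{n,\theta}^{-1}$ is pointwise positive definite.

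For coercivity I would set $\psi=\varphi$ and combine this positivity with the assumed positivity of the density $\rho^n\ge 0$ (an invariant-domain property supplied by the hyperbolic solver). The magnetic contribution $\theta^2\tau_n^2\alpha\int_\domain \rho^n\,(\mathcal{B}_{n,\theta}^{-1}\nabla\varphi)\cdot\nabla\varphi\,\mathrm dx$ is then nonnegative, so that $a_{n,\theta}(\varphi,\varphi)\ge \|\nabla\varphi\|_{L^2(\domain)^d}^2$. Coercivity on the full space $H^1(\domain)$ would then follow from a Poincaré–Friedrichs inequality on the subspace selected by the boundary conditions—homogeneous Dirichlet data, or a zero-mean normalization in the pure Neumann case—which I would make explicit.

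For boundedness I would bound the operator norm of $\mathcal{B}_{n,\theta}^{-1}$. Since $\mathcal{B}_{n,\theta}=\mathbb I-\theta\tau_n J_\Omega$ with $J_\Omega\vec v:=\vec v\times\vec\Omega$ skew-symmetric and hence normal, the eigenvalues of $\mathcal{B}_{n,\theta}$ lie in $\{1,\,1\pm i c\}$ and therefore all have modulus at least one, whence $\|\mathcal{B}_{n,\theta}^{-1}\|_{\ell^2\to\ell^2}\le 1$ uniformly in $\tau_n$ and $\vec\Omega$. Applying Cauchy–Schwarz to both terms and using $\rho^n\in L^\infty(\domain)$ yields
\begin{align*}
  |a_{n,\theta}(\varphi,\psi)|
  \;\le\; \big(1 + \theta^2\tau_n^2\alpha\,\|\rho^n\|_{L^\infty(\domain)}\big)
  \,\|\nabla\varphi\|_{L^2(\domain)^d}\,\|\nabla\psi\|_{L^2(\domain)^d},
\end{align*}
and since the form involves only gradients, boundedness with respect to the full $H^1$-norm is immediate. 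Recalling $\omega_{\text{p}}^2=\rho\,\alpha$, the prefactor equals $1+\mathcal{O}(\tau_n^2\omega_{\text{p}}^2)$, giving the claimed scaling $C=\mathcal{O}(\tau_n^2\omega_{\text{p}}^2)$.

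The Cauchy–Schwarz estimates and the algebraic identities for $\mathcal{B}_{n,\theta}^{-1}$ are routine. The one point I regard as the main obstacle is the coercivity statement on $H^1(\domain)$ as written: the form controls only the seminorm $\|\nabla\varphi\|_{L^2(\domain)^d}$, so genuine $H^1$-coercivity holds only once the boundary conditions (or a mean-zero constraint) are built into the trial and test space via Poincaré. Everything else reduces to the skew-symmetry of $J_\Omega$ and the $L^\infty$-bound on $\rho^n$.
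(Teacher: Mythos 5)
Your proof is correct, and it is essentially the argument the paper itself uses — with the caveat that the paper states this semi-discrete lemma \emph{without} proof; the only written proof of this type of statement is for its fully discrete counterpart, Proposition~\ref{prop:boundedness}, and your two ingredients coincide with the ones used there: the vanishing of the triple product $(\vec v\times\vec\Omega)\cdot\vec v=0$ applied to the explicit formula for $\mathcal B_{n,\theta}^{-1}$, giving pointwise nonnegativity of the magnetic term and hence coercivity, and a uniform-in-$\omega_{\text{c}}$ control of $\mathcal B_{n,\theta}^{-1}$ plus Cauchy--Schwarz for boundedness. Two of your refinements are worth recording. First, your operator-norm bound $\|\mathcal B_{n,\theta}^{-1}\|_{\ell^2\to\ell^2}\le 1$ — obtained from normality of $\mathbb I-\theta\tau_n J_\Omega$, or even more directly from $|\mathcal B_{n,\theta}\vec v|_{\ell^2}^2=|\vec v|_{\ell^2}^2+\theta^2\tau_n^2|\vec v\times\vec\Omega|_{\ell^2}^2\ge|\vec v|_{\ell^2}^2$ — is sharper and cleaner than the paper's termwise estimate $(1+2\tau_n|\vec\Omega|_{\ell^2}+\tau_n^2|\vec\Omega|_{\ell^2}^2)/(1+\tau_n^2|\vec\Omega|_{\ell^2}^2)$, which is only bounded by $2$. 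Second, you make explicit two hypotheses the lemma statement leaves implicit: coercivity needs $\rho^n\ge 0$ (an invariant-domain property, not a consequence of $\rho^n\in L^\infty(\domain)$ alone), and coercivity \emph{on $H^1(\domain)$} as literally stated requires a Poincar\'e-type inequality tied to boundary conditions or a mean-zero constraint, since the form controls only the gradient seminorm; notably, the paper's discrete proposition retreats to exactly the seminorm bound $a_h^{n,\theta}(\varphi_h,\varphi_h)\ge\|\nabla\varphi_h\|_{\vec L^2(\domain)}^2$ for this reason. Both observations strengthen the argument rather than exposing gaps in it.
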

\begin{remark}
  The previous lemma also holds true when the space $H_0^1(\domain)$ is
  replaced by $H^1(\domain)/\mathbb{R}$, the space of $H^1$ functions with
  zero mean value, which is appropriate for homogeneous Neumann boundary
  conditions. Another possible choice is
  $H_{\mathrm{per}}^1(\Omega)/\mathbb{R}$, the space of $H^1$ functions
  with periodic boundary conditions. In the reminder of the paper, we use
  homogeneous Dirichlet boundary conditions to avoid digression into
  technicalities that are not central to the main ideas advanced in the
  paper. The results can be easily adapted to other boundary conditions of
  technical interest.
\end{remark}

It is of interest to discuss briefly in what form subsystem
\eqref{eq:semi_discrete_theta_scheme} can be made compatible with the
magnetic drift limit \eqref{eq:magnetic_drift_limit}. Notably, mathematical
results related to the convergence of limiting processes, e.g., by
weak-defect measure limits \cite{Golse1999}, raise the uncomfortable
question whether such an endeavour is actually numerically meaningful.
Nevertheless, we report the following result:
\begin{lemma}[Compatibility with magnetic drift limit]
  \label{lem:semi_discrete_parabolic_coercive}
  Suppose that at $t_n$ the state $[\rho^n,\vec v^n,\varphi^n]$ satisfies
  the following algebraic conditions:
  \begin{align*}
    \nabla\varphi^n\perp\vec\Omega,\qquad
    \vec v^n =\,
    -\frac{\nabla\varphi^n\times\vec\Omega}{|\vec\Omega|^2_{\ell^2}},
    \qquad
    \nabla\cdot\big(\rho^n\vec v^n\big) = 0.
  \end{align*}
  Then, the solution to \eqref{eq:semi_discrete_theta_scheme} satisfies
  $\vec v^{n+\theta}=\vec v^n$ and $\varphi^{n+\theta}=\varphi^{n}$,
  i.\,e., the algebraic conditions are preserved.
\end{lemma}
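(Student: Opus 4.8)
The plan is to prove the statement by direct verification together with a uniqueness argument. First I would exhibit the stationary candidate $\varphi^{n+\theta}=\varphi^n$, $\vec v^{n+\theta}=\vec v^n$ and check that it satisfies the coupled system formed by the first two lines of \eqref{eq:semi_discrete_theta_scheme}; then I would invoke the well-posedness of the condensed potential problem to conclude that this candidate is in fact \emph{the} solution. (As a byproduct, the extrapolation lines of \eqref{eq:semi_discrete_theta_scheme} immediately propagate $\vec v^{n+1}=\vec v^n$ and $\varphi^{n+1}=\varphi^n$, so the full step preserves the drift state.)

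For the verification, substituting $\varphi^{n+\theta}=\varphi^n$ and $\vec v^{n+\theta}=\vec v^n$ into the momentum line of \eqref{eq:semi_discrete_theta_scheme} cancels the $\vec v^n$ on both sides and reduces the requirement to the single algebraic identity $\nabla\varphi^n=\vec v^n\times\vec\Omega$. This is exactly where the drift hypotheses enter. Using the definition $\vec v^n=-|\vec\Omega|_{\ell^2}^{-2}\,\nabla\varphi^n\times\vec\Omega$ from \eqref{eq:magnetic_drift_velocity} together with the triple-product identity $(\vec a\times\vec b)\times\vec b=(\vec a\cdot\vec b)\,\vec b-|\vec b|_{\ell^2}^2\,\vec a$ and the orthogonality hypothesis $\nabla\varphi^n\perp\vec\Omega$, the term $(\nabla\varphi^n\cdot\vec\Omega)\vec\Omega$ drops out and one computes $\vec v^n\times\vec\Omega=-|\vec\Omega|_{\ell^2}^{-2}\big(-|\vec\Omega|_{\ell^2}^2\,\nabla\varphi^n\big)=\nabla\varphi^n$, as needed. (In the 2D transverse-magnetic case this is just the planar rotation identity, and the convention $\vec v\cdot\vec\Omega=0$ makes the cancellation automatic.) Substituting the candidate into the Poisson line of \eqref{eq:semi_discrete_theta_scheme} leaves only the term $\theta\tau_n\alpha\,\nabla\cdot(\rho^n\vec v^n)$, which vanishes by the stationarity hypothesis $\nabla\cdot(\rho^n\vec v^n)=0$. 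Hence the candidate solves the coupled system.

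To upgrade this to the stated conclusion I would appeal to uniqueness. Eliminating the velocity via $\mathcal B_{n,\theta}$ as in \eqref{eq:semi_discrete_theta_scheme_condensed}, the potential update is governed by the bilinear form $a_{n,\theta}$, which is coercive and bounded on $H^1(\domain)$ by the preceding well-posedness lemma; thus $\varphi^{n+\theta}$ is uniquely determined (with $\nabla\varphi^{n+\theta}$ unique once the boundary conditions fix the additive constant), and $\vec v^{n+\theta}=\mathcal B_{n,\theta}^{-1}\big(\vec v^n-\theta\tau_n\nabla\varphi^{n+\theta}\big)$ is then uniquely determined as well. Since the verified candidate is a solution, it must coincide with this unique solution, giving $\varphi^{n+\theta}=\varphi^n$ and $\vec v^{n+\theta}=\vec v^n$.

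The substitution itself is routine; the conceptual heart of the argument is the triple-product reduction to $\nabla\varphi^n=\vec v^n\times\vec\Omega$, since it is precisely there that the three algebraic side conditions conspire to balance the Lorentz force against the electrostatic force and reproduce the drift relation. The step I expect to require the most care is the uniqueness wrap-up: one must ensure that the function space and boundary conditions are chosen so that coercivity of $a_{n,\theta}$ genuinely yields uniqueness of $\nabla\varphi^{n+\theta}$, and hence of $\vec v^{n+\theta}$, rather than leaving an indeterminate additive constant that could affect the velocity.
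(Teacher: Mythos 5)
Your proposal is correct and follows essentially the same route as the paper: both arguments rest on the drift identity $\vec v^n\times\vec\Omega=\nabla\varphi^n$ (obtained from the triple-product formula together with $\nabla\varphi^n\perp\vec\Omega$), on the hypothesis $\nabla\cdot(\rho^n\vec v^n)=0$, and on coercivity of the condensed bilinear form from the well-posedness lemma. The only organizational difference is that the paper phrases the uniqueness step as showing that the difference $\varphi^{n+\theta}-\varphi^n$ solves a homogeneous coercive equation (via the cancellation $\mathcal B_{n,\theta}^{-1}\mathcal B_{n,\theta}\vec v^n=\vec v^n$ in \eqref{eq:semi_discrete_theta_scheme_condensed}), whereas you verify the stationary candidate directly in the coupled system \eqref{eq:semi_discrete_theta_scheme} and then invoke uniqueness of the condensed problem---for this linear system the two are the same argument, and your closing caveat about fixing the additive constant of $\varphi$ through boundary conditions matches the implicit assumption in the paper's own proof.
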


\begin{proof}
  We start by rewriting the first equation of
  \eqref{eq:semi_discrete_theta_scheme_condensed} by subtracting the same terms
  for $\varphi^n$ that appear on the left-hand side of the equation. Using the
  first and second assumption to manipulate the right hand side shows:
  \begin{align*}
    -\Delta\big\{\varphi^{n+\theta}-\varphi^{n}\big\}
    -\theta^2\tau_n^2\alpha\,&\nabla\cdot
    \Big(\rho^n\mathcal
    B_{n,\theta}^{-1}\nabla\big\{\varphi^{n+\theta}-\varphi^n\big\}\Big)
    \\
    &= - \theta\tau_n\alpha\,\nabla\cdot\Big(\rho^n\mathcal
    B_{n,\theta}^{-1} \vec v^n\Big) +\theta^2\tau_n^2\alpha\,\nabla\cdot
    \Big(\rho^n\mathcal B_{n,\theta}^{-1}\nabla\varphi^n\Big)
    \\
    &= - \theta\tau_n\alpha\,\nabla\cdot\Big(\rho^n\mathcal
    B_{n,\theta}^{-1}\Big\{\vec v^n +
    \theta\tau_n\,
    \frac{\nabla\varphi^n\times\vec\Omega}
    {|\vec\Omega|^2_{\ell^2}}\times\vec\Omega\Big\}\Big).
    \\
    &= - \theta\tau_n\alpha\,\nabla\cdot\big(\rho^n\mathcal
    B_{n,\theta}^{-1}
    \mathcal{B}_{n,\theta}\vec v^n\big)\;=\;0.
  \end{align*}
  the left-hand side is a coercive operator under the assumption of
  homogeneous Dirichlet boundary data, thus
  $\varphi^{n+\theta}=\varphi^{n}$. Next, using the second algebraic
  condition, the second equation of
  \eqref{eq:semi_discrete_theta_scheme_condensed} can be rewritten as
  follows:
  \begin{align*}
    \mathcal{B}_{n,\theta}\,\big\{\vec v^{n+\theta}-\vec v^n\big\}
    = \theta\tau_n\big(\nabla\varphi^{n+\theta}-\nabla\varphi^{n}\big) = 0,
  \end{align*}
  implying that $\vec v^{n+\theta}=\vec v^n$.
\end{proof}


\begin{remark}[Asymptotic compatibility with equilibrium solutions]
  \label{rem:stationary_solution}
  As an immediate consequence, we observe that any flow configuration with a
  barotropic equation of state in the regime of \emph{negligible pressure}
  and with a rotationally symmetric density profile (that only depends on the
  radius $r$ in cylindrical coordinates) is a (almost) stationary solution to
  the Euler-Poisson system (on the continuous and semidiscrete level):
  \begin{align*}
    \rho\,=\,\rho(r),
    \qquad
    p=p(\rho)\approx 0,
    \qquad
    \vec v^n=\,-\frac{\nabla\varphi^n\times\vec\Omega}{|\vec\Omega|^2_{\ell^2}}.
  \end{align*}
\end{remark}


\section{A fully discrete structure-preserving discretization}
\label{sec:FullyDiscrete}
We now describe a spatial discretization of
\eqref{eq:semi_discrete_theta_scheme_condensed} that maintains the crucial
equivalence of \eqref{eq:semi_discrete_theta_scheme} and
\eqref{eq:semi_discrete_theta_scheme_condensed} on a fully discrete level.
This in turn enables us to establish discrete counterparts of
Lemmas~\ref{lem:semi_discrete_parabolic_energy_balance}
and~\ref{lem:semi_discrete_parabolic_coercive}. We start by introducing the
finite element spaces and a lumped inner product. Then, we discuss the
fully discrete system.

\subsection{Finite element spaces}\label{sec:finite_elements}
Let $\mathbb Q_1$ denote the space of $d$-variable polynomials of degree at
most $1$ in each variable, and let $\widehat K = [0,1]^d$ denote the
reference cell. Let $\mathcal T_h$ be a \emph{shape and form regular} mesh
consisting of quadrilateral (for $d=2$) or hexahedral (for $d=3$) cells
$K$, covering the domain $\domain$, and obtained from a diffeomorphic
mapping $\vec T_K : \widehat K \to K$. We then introduce a discontinuous
and a continuous finite element space as follows:
\begin{align}
  \nonumber
  \mathbb V_h &:= \{ z_h \in \Ltwo \;:\; z_h \circ \vec T_K \in \mathbb
  Q_1 \text{ for all } K \in \mathcal T_h \},
  \\[0.5em]
  \label{HypSpace}
  \mathbb H_h &:= \{ \psi_h \in \mathcal C^0(\domain) \;:\; \psi_h \circ
  \vec T_K \in \mathbb Q_1 \text{ for all } K \in \mathcal T_h \}.
\end{align}
Note that $\text{dim}\{\mathbb H_h\} \leq \text{dim}\{\mathbb V_h\}$ and
$\mathbb H_h \subseteq \mathbb V_h$. That is, by construction every
function in $u_h \in \mathbb H_h$ automatically belongs to $\mathbb V_h$.
We will use $\{\phi_{i,K}\}$ to denote the nodal basis of the discontinuous
space $\mathbb V_h$, and $\{\chi_i\}$ for the nodal basis of the continuous
space $\mathbb H_h$. As it is standard for $\mathbb Q_1$ elements, we will
assume that the interpolation points are associated with the vertices of
the elements. See \cite{ErnGuermond2004, DiPietro2012} for more details on
the construction of nodal continuous and discontinuous finite element
spaces.

The mapping $\vec T_K : \widehat K \to K$ is not assumed to be affine.
Therefore, it is \emph{not} necessarily true that $\nabla \mathbb H_h
\subset \mathbb V_h^d$, which, on the semidiscrete level, is a crucial property
to establish equivalence between
\eqref{eq:semi_discrete_theta_scheme} and
\eqref{eq:semi_discrete_theta_scheme_condensed}. Furthermore, in order to
connect a fully discrete energy balance to the energy balance of the
(algebraic) approximation of the hyperbolic subsystem, we need to extract
the kinetic energy as an algebraic sum over collocated point values in
support points rather than an integral \cite{eulerpoisson23}. For this
reason we need to make use of a carefully constructed lumped inner product
with corresponding nodal interpolant. We define
\begin{definition}[Lumped bilinear product]
  Let $\mathcal{C}(\mathcal T_h)$ be the space of piecewise continuous
  scalar-valued functions on the mesh $\mathcal T_h$. That is, functions
  that are continuous within each element, with well defined traces within
  each element, but possibly discontinuous across element boundaries. Let
  $f(\xcoord), g(\xcoord) \in \mathcal{C}(\mathcal T_h)$, then we define
  the bilinear form:
  \begin{align}
    \label{eq:lumped_inner_product}
    \langle f, g \rangle_h \,:=\,
    \sum_{j=1}^{\mathcal{N}} \sum_{K \in \mathcal T_h} m_{j,K}
    \, f(\vec x_{j})\big|_K\, g(\vec x_{j})\big|_K
  \end{align}
  where $m_{j,K}:=\int_K \phi_{j,K}(\vec x)\,\text d x$ is an element of
  the lumped mass matrix, while $f(\vec x_{j})\big|_K$ and $g(\vec
  x_{j})\big|_K$ denote interior traces with respect to the element $K$.
  Similarly, we define the space of piecewise continuous vector-valued
  functions on the mesh as $\mathcal{C}(\mathcal T_h)^d$. Let
  $\boldsymbol{f}(\xcoord), \boldsymbol{g}(\xcoord) \in
  \mathcal{C}(\mathcal T_h)^d$, then we define the bilinear form:
  \begin{align}
    \label{eq:lumped_inner_product_ld}
    \langle \boldsymbol{f}, \boldsymbol{g} \rangle_h \,:=\,
    \sum_{j=1}^{\mathcal{N}} \sum_{K \in \mathcal T_h} m_{j,K}
    \,\boldsymbol{f}(\vec x_{j})\big|_K \cdot \boldsymbol{g}(\vec x_{j})\big|_K
  \end{align}
  If the functions $f(\xcoord)$ and $g(\xcoord)$ in definition
  \eqref{eq:lumped_inner_product} belong to the space $\mathbb{V}_h$ then
  we have that $\langle \cdot, \cdot\rangle_h$ defines a proper
  inner-product, also known as lumped inner-product. The corresponding
  induced norm shall be denoted as $\|f\|_h:=\sqrt{\langle f,f\rangle_h}$
  and the following norm equivalence holds:
  \begin{align}
    \label{equivBounds}
    c_1 \|f\|_{L^2(\domain)} \leq \|f\|_h \leq c_2 \|f\|_{L^2(\domain)}
    \quad \text{for all }f \in \mathbb{V}_h
  \end{align}
  for some non-negative bounded constants $c_1$ and $c_2$. A similar
  statement holds for the case definition
  \eqref{eq:lumped_inner_product_ld} when $\boldsymbol{f}(\xcoord),
  \boldsymbol{g}(\xcoord) \in \mathbb{V}_h^d$. More broadly, the lumped
  bilinear products \eqref{eq:lumped_inner_product} and
  \eqref{eq:lumped_inner_product_ld} induces a norm in some
  finite-dimensional space provided we can establish equivalence bounds
  such as \eqref{equivBounds}. We extend the use of these bilinear forms in
  Assumption \ref{ass:regular_mesh}.
\end{definition}
\begin{remark}[Lumping accuracy]\label{lumpingAccuracy}
  We note in passing that
  \begin{align}
    \sum_{j=1}^{\mathcal{N}}
    \sum_{K \in \mathcal T_h}
    m_{j,K} \varphi_h\big|_K(\vec x_{j}) =
    \int_{\domain }
    \varphi_h\,\mathrm{d}\xcoord
    \ \ \text{for all } \varphi_h \in \mathbb{V}_h
  \end{align}
  In other words, the quadrature rule using weights $m_{j,K}$ is exact for
  functions in the space $\mathbb{V}_h$. Proving second-order accuracy of
  this lumping when using affine mappings is a relatively well-known
  exercise, see \cite{Ciarlet1978} and \cite[Ch. 3]{Bartels2015}. However,
  establishing the same property for non-affine meshes is a much more
  technical proposition \cite[Ch. 13]{ErnGuermond2021book}. For
  asymptotically affine meshes, second order consistency of the lumping is
  observed experimentally; see, for instance, the computational results in
  \cite[Section 5.1]{eulerpoisson22}.
\end{remark}

We observe that $\langle\varphi_h,\psi_h\rangle_h$ is a proper inner
product on $\mathbb V_h$ and $\mathbb H_h$ approximating the $\Ltwo$ inner
product. We extend the inner product to vector-valued functions
$\vec\varphi_h$, $\vec\psi_h\in\mathbb{V}_h^d$ by setting
\begin{equation*}
    \langle \vec\varphi_h, \vec\psi_h \rangle_h \,:=\,
    \sum_{j=1}^{\mathcal{N}} \sum_{K \in \mathcal T_h} m_{j,K}
    \,\vec\varphi_h\big|_K(\vec x_{j})\cdot \vec\psi_h\big|_K(\vec x_{j}).
\end{equation*}
The corresponding nodal interpolant $I_h\,:\mathcal C^0(\mathcal
T_h)^d\to\mathbb{V}_h^d$ is given by
\begin{equation}
  \label{eq:nodal_interpolant}
  \big(I_h\vec \varphi\big)(\vec x)\,:=\,
  \sum_{j=1}^{\mathcal{N}} \sum_{K \in \mathcal T_h}
  \vec\varphi\big|_K(\vec x_{j})\,\phi_{j,K}(\vec x).
\end{equation}
By construction, it holds true that
\begin{align*}
  \langle I_h\vec\varphi, \vec\psi \rangle_h =\langle \vec\varphi, \vec\psi
  \rangle_h =\langle \vec\varphi, I_h\vec\psi \rangle_h \quad\text{for
  all}\quad \vec\varphi, \vec\psi\in\mathcal C^0(\mathcal T_h)^d.
\end{align*}
where $C^0(\mathcal T_h)^d$ is the space of piecewise continuous function on
the triangulation $\mathcal T_h$.

\subsection{Assumptions on the hyperbolic solver}\label{sec:assumpHyp}
We now summarize our structural assumptions on the discretization of the
hyperbolic subsystem. For the sake of completeness, we outline some
implementation details of the hyperbolic solver used in our numerical
computations in Appendix \ref{sec:HypSolver}. In general, we make the
following assumptions on the hyperbolic update procedure.

We assume that all components of the hyperbolic state approximation $\vec
u_h^n=\big[\rho_h^n,\mom_h^n,\totme_h^n\big]$ for time $t_n$ are
discretized using the scalar-valued, discontinuous finite element space
$\mathbb V_h$ introduced above:
\begin{align*} \vec u_h^n(\xcoord) = \sum_{j=1}^{\mathcal{N}} \sum_{K \in
\mathcal T_h} \vec U^n_{i,K} \phi_{i,K}(\xcoord), \end{align*}
where $\vec U^n_{i,K}=\big[\rho_i^n,\mom_i^n,\totme_i^n\big]\in\mathbb
R^{d+2}$. Let $\vec u_h^{n+1}$ be the discrete update computed with the
hyperbolic solver for time $t_{n+1}$. The hyperbolic update shall maintain
an admissible set, this is to say that
\begin{align*}
  \vec U^n_{i,K}\in\mathcal{A}\text{ for all }i\le\mathcal{N},\,K
  \in\mathcal{T}_h
  \quad\text{implies}\quad
  \vec U^{n+1}_{i,K}\in\mathcal{A}\text{ for all }i\le\mathcal{N},\,K
  \in\mathcal{T}_h,
\end{align*}
where $\mathcal{A} = \{\mathbb{R}^{d+2} \, | \, \rho > 0 \text{ and }
e(\state) > 0\}$ for the case of the full Euler equations with covolume
equation of state. For the case of barotropic equation of state, when the
energy equation is not part of the system, admissibility reduces to
positivity of the density, $\mathcal{A} = \{\mathbb{R}^{d+1} \, | \, \rho >
0 \}$.

\paragraph{Conservation}
Neglecting the influence of boundary conditions, we assume that the
hyperbolic solver is conservative, i.\,e.,
\begin{align*}
  \sum_{j=1}^{\mathcal{N}} \sum_{K \in \mathcal T_h}
  m_{i,K}\,\vec U^{n+1}_{i,K}
  =
  \sum_{j=1}^{\mathcal{N}} \sum_{K \in \mathcal T_h}
  m_{i,K}\,\vec U^{n}_{i,K}.
\end{align*}

\paragraph{Entropy inequalities}
Finally, we assume that the hyperbolic solver maintains in some sense a
global entropy inequality: Selecting a suitable entropy $\eta$ and
neglecting the influence of boundary conditions,
\begin{align}
  \label{EulerSolverDissipation}
  \sum_{j=1}^{\mathcal{N}} \sum_{K \in \mathcal T_h} m_{i,K}
  \eta(\state_{i,K}^{n+1})
  \;\leq\;
  \sum_{j=1}^{\mathcal{N}} \sum_{K \in \mathcal T_h} m_{i,K}
  \eta(\state_{i,K}^{n}).
\end{align}
%


\subsection{Fully discrete source update}
We are now in a position to introduce a fully discrete counterpart to
\eqref{eq:semi_discrete_theta_scheme_condensed}. We proceed as follows.

\begin{definition}[Fully discrete source update]
  Given discrete $\rho^n_h\in\mathbb V_h$, $\vec v^n_h\in\mathbb V_h^d$,
  $\varphi^n_h\in\mathbb H_h$ at time $t_n$, compute approximations $\vec
  v^{n+1}_h\in\mathbb V_h^d$ and $\varphi^{n+1}_h\in\mathbb H_h$ at time
  $t_{n+1}=t_n+\tau_n$ as follows:
  \begin{align}
    \label{eq:fully_discrete_theta_scheme_condensed}
    \begin{cases}
      \begin{aligned}
        a^{n,\theta}_h(\varphi_h^{n+\theta}, \psi_h)
        &=
        \left(\nabla \varphi_h^n, \nabla \psi_h\right) +
        \theta\tau_n\alpha\, \big\langle\rho_h^n\mathcal
        B_{n,\theta}^{-1} \vec v_h^n, \nabla \psi_h\big\rangle_h,
        &\;& \forall\,\psi_h\in\mathbb{H}_h,
        \\[0.5em]
        \big\langle\vec v_h^{n+\theta}, \vec z_h\big\rangle_h
        &=
        \big\langle\mathcal B_{n,\theta}^{-1}\big(\vec v_h^n -
        \theta\tau_n\nabla\varphi_h^{n+\theta}\big), \vec
        z_h\big\rangle_h,
        &\;& \forall\,\vec z_h\in\mathbb{V}_h^d,
        \\[0.5em]
        \theta\,\vec v^{n+1} &= \vec v^{n+\theta} - (1-\theta)\,\vec v^{n},
        \\[0.5em]
        \theta\,\varphi^{n+1} &= \varphi^{n+\theta} - (1-\theta)\,\varphi^{n}.
      \end{aligned}
    \end{cases}
  \end{align}
  Here, we have introduced the discrete bilinear form
  \begin{align}
    \label{eq:discrete_bilinear_form}
    a^{n,\theta}_h(\varphi_h, \psi_h) := \big(\nabla \varphi_h, \nabla
    \psi_h\big) + \theta^2\tau_n^2\alpha\, \big\langle\rho_h^n\mathcal
    B_{n,\theta}^{-1} \nabla \varphi_h, \nabla \psi_h\big\rangle_h.
  \end{align}
\end{definition}

\subsection{Well-posedness and structure preservation}
We now establish that, for sufficiently regular mesh families $\{\mathcal
T_h\}_h$, the bilinear form $a^{n,\theta}_{h>0}$ is uniformly coercive and
bounded with a constant independent of $\omega_{\mathrm{c}}$ and essentially
only depending on $\tau_n^2\omega_{\mathrm{p}}^2$. To this end, we require
that the selected mesh sequence $\{\mathcal T_h\}_{h\geq0}$ is sufficiently
regular such that the following assumption holds true on the approximation
quality of the lumped inner product.
\begin{assumption}
  \label{ass:regular_mesh}
  There exist constants $c_0, c_1, c_2, s > 0$ such that, for all $h > 0$,
  \begin{align}
    \label{ass:lumping}
    &\big|\|\vec v_h\|_h - \|\vec v_h\|_{\Ltwod}\big| \le
      c_0 \,h^s\,\|\vec v_h\|_{\Ltwod}
    \quad\text{for all }\vec v_h\in\mathbb V_h^d, \\
    \label{ass:lumping_grad}
    &c_1 \|\nabla\varphi_h\|_{L^2(\domain)} \leq \|\nabla\varphi_h\|_h
      \leq c_2 \|\nabla\varphi_h\|_{L^2(\domain)}
    \ \text{for all }\varphi_h \in \mathbb H_h.
  \end{align}
  Assumption \eqref{ass:lumping} holds true provided the lumping is
  better than first-order accurate. For affine meshes this poses no concern
  \cite[Chapter 11]{ErnGuermond2021book}. For non-affine meshes, this depends
  on the quality of the mesh sequence; see Remark \ref{lumpingAccuracy} for
  more details.
\end{assumption}

\begin{proposition}[Coercivity and boundedness]
  \label{prop:boundedness}
  Suppose that Assumption~\ref{ass:regular_mesh} holds true and that the
  density remains positive. Then, there exists a constant $c$ independent
  of $h$ (and $\omega_p$), such that for all
  $\varphi_h,\psi_h\in\mathbb{V}_h^d$:
  \begin{align*}
    a_{h}^{n,\theta}(\varphi_h,\varphi_h)
    &\,\geq\,
    \|\nabla \varphi_h\|_{\Ltwod}^2,
    \\[0.5em]
    \big|a_{h}^{n,\theta}(\varphi_h,\psi_h)\big|
    &\,\leq\,
    \big(1 + c\,\tau_{n}^2 \alpha \|\rho_h^n\|_{h,\infty}\big)
    \|\nabla\varphi_h\|_{\Ltwod}
    \|\nabla\psi_h\|_{\Ltwod}.
  \end{align*}
  Here, $\|\rho_h^n\|_{h,\infty}$ denotes the largest value of
  $|\rho_h^n|$ obtained in the support points of $\mathbb{V}_h$.
\end{proposition}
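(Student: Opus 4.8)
The plan is to establish the two inequalities separately. Coercivity will follow from pointwise positive semidefiniteness of $\mathcal B_{n,\theta}^{-1}$ and requires no mesh regularity, whereas boundedness hinges on the observation that $\mathcal B_{n,\theta}^{-1}$ has operator norm at most one, \emph{independently} of the cyclotron frequency $\omega_{\text{c}}$.

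For coercivity, I would write $a_h^{n,\theta}(\varphi_h,\varphi_h)=\|\nabla\varphi_h\|_{\boldsymbol L^2(\domain)}^2+\theta^2\tau_n^2\alpha\langle\rho_h^n\mathcal B_{n,\theta}^{-1}\nabla\varphi_h,\nabla\varphi_h\rangle_h$ and show the second summand is nonnegative. Expanding the lumped product as a sum over nodes with weights $m_{j,K}>0$ and factors $\rho_h^n|_K(\vec x_j)\ge 0$ (density positivity from admissibility), it suffices to check the pointwise quadratic form $\vec w\cdot\mathcal B_{n,\theta}^{-1}\vec w\ge 0$. Using the explicit formula for $\mathcal B_{n,\theta}^{-1}$ and the identity $\vec w\cdot(\vec w\times\vec\Omega)=0$, this form evaluates to $(|\vec w|^2+\theta^2\tau_n^2(\vec w\cdot\vec\Omega)^2)/(1+\theta^2\tau_n^2|\vec\Omega|_{\ell^2}^2)\ge 0$, and coercivity with constant one follows.

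For boundedness, after applying Cauchy-Schwarz to the first, standard $L^2$ term, the remaining work is the lumped term. I would bound $\rho_h^n|_K(\vec x_j)\le\|\rho_h^n\|_{h,\infty}$, use a nodewise Cauchy-Schwarz together with the operator bound $|\mathcal B_{n,\theta}^{-1}\vec w|\le|\vec w|$, and then a discrete Cauchy-Schwarz for the lumped inner product to reach $\theta^2\tau_n^2\alpha\|\rho_h^n\|_{h,\infty}\|\nabla\varphi_h\|_h\|\nabla\psi_h\|_h$. The operator bound is the key structural fact and is immediate: for any $\vec u$ one has $|\mathcal B_{n,\theta}\vec u|^2=|\vec u|^2+\theta^2\tau_n^2|\vec u\times\vec\Omega|_{\ell^2}^2\ge|\vec u|^2$ because $\vec u\cdot(\vec u\times\vec\Omega)=0$, whence $\|\mathcal B_{n,\theta}^{-1}\|_2\le 1$ uniformly in $\omega_{\text{c}}$. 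This is precisely what keeps the bounding constant free of any cyclotron-frequency dependence.

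The final and most delicate step is to convert the lumped norms $\|\nabla\varphi_h\|_h$ back to $\|\nabla\varphi_h\|_{\boldsymbol L^2(\domain)}$ using Assumption~\ref{ass:regular_mesh}. The obstacle is that the assumption is stated for members of $\mathbb V_h^d$, whereas $\nabla\varphi_h\notin\mathbb V_h^d$ in general because $\vec T_K$ is not affine. I would circumvent this by inserting the nodal interpolant: since $\|\cdot\|_h$ depends only on nodal values one has $\|\nabla\varphi_h\|_h=\|I_h\nabla\varphi_h\|_h$ with $I_h\nabla\varphi_h\in\mathbb V_h^d$, so Assumption~\ref{ass:regular_mesh} applies and gives $\|\nabla\varphi_h\|_h\le(1+Ch^s)\|I_h\nabla\varphi_h\|_{\boldsymbol L^2(\domain)}$. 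It then remains to show the cellwise $L^2$-stability $\|I_h\nabla\varphi_h\|_{\boldsymbol L^2(\domain)}\lesssim\|\nabla\varphi_h\|_{\boldsymbol L^2(\domain)}$, which follows from shape regularity and norm equivalence on the reference cell for the bounded-complexity functions $\nabla\varphi_h\circ\vec T_K$. Absorbing $(1+Ch^s)^2$ and $\theta^2\le 1$ into a single $h$-independent constant $c$ yields the claimed factor $1+c\tau_n^2\alpha\|\rho_h^n\|_{h,\infty}$. I expect this norm conversion, and in particular the interpolant stability, to be the main technical obstacle; by contrast the coercivity and the operator-norm bound are clean and immediate.
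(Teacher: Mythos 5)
Your proof is correct, and its overall skeleton (coercivity from the explicit formula for $\mathcal B_{n,\theta}^{-1}$, boundedness by nodewise estimates plus Assumption~\ref{ass:regular_mesh}) matches the paper's; the coercivity half is essentially identical. Two steps differ in ways worth recording. First, for boundedness the paper bounds the three terms of the explicit inverse separately, arriving at the factor $\bigl(1+2\tau_n|\vec\Omega|_{\ell^2}+\tau_n^2|\vec\Omega|_{\ell^2}^2\bigr)/\bigl(1+\tau_n^2|\vec\Omega|_{\ell^2}^2\bigr)$, which is uniformly bounded in $\omega_{\text{c}}$ (by $2$); you instead prove the exact operator bound $\|\mathcal B_{n,\theta}^{-1}\|_2\le 1$ from $|\mathcal B_{n,\theta}\vec u|^2=|\vec u|^2+\theta^2\tau_n^2|\vec u\times\vec\Omega|_{\ell^2}^2\ge|\vec u|^2$, which is cleaner, gives a slightly sharper constant, and captures the same essential point, namely uniformity in the cyclotron frequency. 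Second, the paper ends by asserting that the lumped-norm estimate ``combined with Assumption~\ref{ass:regular_mesh} establishes the result,'' silently applying that assumption to $\nabla\varphi_h$, which in general does \emph{not} lie in $\mathbb V_h^d$ --- a point the paper itself emphasizes in Section~\ref{sec:finite_elements} for non-affine mappings $\vec T_K$. Your insertion of the nodal interpolant, $\|\nabla\varphi_h\|_h=\|I_h\nabla\varphi_h\|_h$ with $I_h\nabla\varphi_h\in\mathbb V_h^d$, followed by Assumption~\ref{ass:regular_mesh} and an $L^2$-stability bound for $I_h$ on the piecewise-smooth gradients (valid under the shape and form regularity of $\{\mathcal T_h\}_h$), closes this gap explicitly; it is the honest version of the paper's final step, at the modest price of the extra stability lemma you correctly identify as the main technical obstacle.
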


\begin{proof}
  The first inequality follows directly from the definition of
  $a^{n,\theta}_h$ and the inverse \eqref{eq:B_inverse},
  \begin{multline*}
    a^{n,\theta}_h(\varphi_h,\varphi_h) =
    \|\nabla\varphi_h\|_{\Ltwod}^2
    \\[0.25em]
    +\;
    \frac{\theta^2\tau_n^2\alpha}{(1 + \theta^2\tau_n^2|\vec
    \Omega|_{\ell^2}^2)}
    \Big\{
      \left\langle \rho_h^n \nabla\varphi_h , \nabla\varphi_h \right\rangle_h
      +
      \tau_n^2
      \left\langle \rho_h^n \nabla\varphi_h \cdot \vec\Omega ,
      \nabla\varphi_h \cdot \vec\Omega \right\rangle_h
    \Big\}
    \\
    \geq \|\nabla\varphi_h\|_{\Ltwod}^2.
  \end{multline*}
  where we have use the assumption $\rho_h^n(\xcoord) > 0$. This
  establishes the first inequality. For the second inequality we estimate
  similarly:
  \begin{multline}
      \big|a^{n,\theta}_h(\varphi_h,\psi_h)\big|
      \leq
      \|\nabla\varphi_h\|_{\Ltwod}
      \|\nabla\psi_h\|_{\Ltwod}
      \\[0.25em]
      +\;
      \theta^2\tau_n^2\alpha\|\rho_h^n\|_{h,\infty}
      \,
      \frac {1 + 2 \tau_n |\vec\Omega|_{\ell^2} + \tau_n^2
      |\vec\Omega|_{\ell^2}^2} {1+\tau_n^2|\vec\Omega|_{\ell^2}^2}
      \,
      \|\nabla\varphi_h\|_h
      \|\nabla\psi_h\|_h,
  \end{multline}
  which combined with estimate \eqref{ass:lumping_grad} establishes the
  result.
\end{proof}
We note
\begin{corollary}
  Since the semi-norm $\|\nabla \varphi_h\|_{L^2(\domain)}$ and the the norm
  $\|\varphi_h\|_{H^1(\domain)}$ are equivalent in the $H_0^1(\domain)$ space,
  the fully discrete update \eqref{eq:fully_discrete_theta_scheme_condensed} is
  well posed.
\end{corollary}
Now, we establish the main result of this section.
\begin{proposition}[Fully discrete parabolic energy balance]
  \label{prop:fully_discrete_parabolic_energy_balance}%
  Writing
  \begin{align*}
    \rho^\nu_h\,=:\,\sum_{j=1}^{\mathcal{N}}\sum_{K\in\mathcal{T}_h}
    \rho_{j,K}^\nu\phi_{j,K},
    \qquad
    \vec v^\nu_h\,=:\,\sum_{j=1}^{\mathcal{N}}\sum_{K\in\mathcal{T}_h}
    \vec v_{j,K}^\nu\phi_{j,K},
  \end{align*}
  for $\nu=n,n+\theta$, the fully discrete system
  \eqref{eq:fully_discrete_theta_scheme_condensed} admits an energy
  balance:
  \begin{multline}
    \label{eq:fully_discrete_parabolic_energy_balance}
    \sum_{j=1}^{\mathcal{N}}\sum_{K\in\mathcal{T}_h}\frac{1}{2}m_{j,K}
    \rho_{j,K}^{n}\big|\vec v_{j,K}^{n+1}\big|_{\ell^2}^2
    \,+\,
    \frac1{2\alpha}\int_\domain\big|\nabla\varphi_h^{n+1}
    \big|_{\ell^2}^2\,\mathrm{d}x
    \\
    +\;\big(\theta-\frac12\big) \bigg\{
    \sum_{j=1}^{\mathcal{N}}\sum_{K\in\mathcal{T}_h}\frac{1}{2}m_{j,K}
    \rho_{j,K}^{n}\big|\vec v_{j,K}^{n+1}-\vec v_{j,K}^{n}\big|_{\ell^2}^2
    \,+\,
    \frac1{2\alpha}\int_\domain\big|\nabla\varphi_h^{n+1}-\nabla\varphi_h^{n}
    \big|_{\ell^2}^2\,\mathrm{d}x \bigg\}
    \\
    =\;\sum_{j=1}^{\mathcal{N}}\sum_{K\in\mathcal{T}_h}\frac{1}{2}m_{j,K}
    \rho_{j,K}^{n}\big|\vec v_{j,K}^{n}\big|_{\ell^2}^2
    \,+\, \frac1{2\alpha}\int_\domain\big|\nabla\varphi_h^{n}
    \big|_{\ell^2}^2\,\mathrm{d}x.
  \end{multline}
\end{proposition}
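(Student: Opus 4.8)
The plan is to mirror exactly the proof of the semi-discrete result (Lemma~\ref{lem:semi_discrete_parabolic_energy_balance}), but now replacing every continuous $L^2$-pairing that involves the velocity $\vec v_h$ or the density $\rho_h^n$ with the lumped inner product $\langle\cdot,\cdot\rangle_h$, while keeping the genuine $L^2$-pairing for the purely gradient-of-potential terms. The key algebraic engine is the same polarization identity already stated in the proof of Lemma~\ref{lem:semi_discrete_parabolic_energy_balance}: for any $a,b$ in an inner product space,
\begin{align*}
  \tfrac{1}{\theta}\big(\theta a+(1-\theta)b,\ \theta a+(1-\theta)b-b\big)
  = \tfrac12\big(|a|^2-|b|^2\big) + \big(\theta-\tfrac12\big)|a-b|^2.
\end{align*}
Here I would apply it once with the lumped inner product (taking $a=\vec v_h^{n+1}$, $b=\vec v_h^n$, weighted by $\rho_h^n$) to generate the kinetic-energy terms, and once with the genuine $L^2$ inner product (taking $a=\nabla\varphi_h^{n+1}$, $b=\nabla\varphi_h^n$) to generate the electrostatic-energy terms.

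First I would test the second equation of \eqref{eq:fully_discrete_theta_scheme_condensed}, the velocity update, with the test function $\theta^{-1}\rho_h^n\varphi_h^n\vec v_h^{n+\theta}$ in the lumped inner product; crucially, because the velocity equation is posed \emph{in} the lumped inner product against all $\vec z_h\in\mathbb V_h^d$, this test function is admissible and reproduces $\langle\rho_h^n\mathcal B_{n,\theta}\vec v_h^{n+\theta},\vec v_h^{n+\theta}\rangle_h$ exactly. Here I would use the skew-symmetry of the $\vec v\mapsto\vec v\times\vec\Omega$ part of $\mathcal B_{n,\theta}$ (see \eqref{eq:definition_bntheta}), so that $\langle\rho_h^n\,\vec v_h^{n+\theta}\times\vec\Omega,\vec v_h^{n+\theta}\rangle_h=0$ pointwise in every support point, leaving only the symmetric identity part. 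Next I would test the first equation, the condensed Poisson problem, with $(\alpha\theta)^{-1}\varphi_h^{n+\theta}$; the left-hand bilinear form $a^{n,\theta}_h$ contains both the $L^2$ Laplacian term and the lumped $\mathcal B^{-1}$-weighted term, and the matching lumped term on the right-hand side is designed precisely to cancel the contribution coming from the velocity test. Adding the two tested equations, the cross terms coupling $\nabla\varphi_h^{n+\theta}$ and $\vec v_h^{n+\theta}$ are engineered to cancel, which is exactly the point of the static-condensation equivalence between \eqref{eq:semi_discrete_theta_scheme} and \eqref{eq:semi_discrete_theta_scheme_condensed} at the fully discrete level. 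After cancellation I would substitute the extrapolation relations $\varphi_h^{n+\theta}=\theta\varphi_h^{n+1}+(1-\theta)\varphi_h^n$ and $\vec v_h^{n+\theta}=\theta\vec v_h^{n+1}+(1-\theta)\vec v_h^n$ from the last two lines of \eqref{eq:fully_discrete_theta_scheme_condensed}, and then apply the polarization identity in each of the two inner products to land on \eqref{eq:fully_discrete_parabolic_energy_balance}.

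The main obstacle is the consistency of the two different inner products and the fact that $\nabla\mathbb H_h\not\subset\mathbb V_h^d$ because the cell maps $\vec T_K$ are not affine. Specifically, the velocity test function requires evaluating $\rho_h^n\varphi_h^n\vec v_h^{n+\theta}$ at support points, but the gradient $\nabla\varphi_h^{n+\theta}$ appearing in the velocity update \eqref{eq:fully_discrete_theta_scheme_condensed} is not itself a member of $\mathbb V_h^d$ in general. The way I would handle this is to lean on the identity $\langle I_h\vec\varphi,\vec\psi\rangle_h=\langle\vec\varphi,\vec\psi\rangle_h=\langle\vec\varphi,I_h\vec\psi\rangle_h$ for the nodal interpolant $I_h$ from \eqref{eq:nodal_interpolant}, so that all lumped pairings involving $\nabla\varphi_h^{n+\theta}$ are interpreted as pairings against its interpolant, and the cancellation between the velocity-test term and the right-hand side of the condensed Poisson equation then holds exactly at the collocation points. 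I expect the delicate bookkeeping to be verifying that the boundary-free energy identity is clean, i.e. that under periodic or energy-conserving boundary conditions the surface term present in Lemma~\ref{lem:semi_discrete_parabolic_energy_balance} vanishes; since the statement of Proposition~\ref{prop:fully_discrete_parabolic_energy_balance} has no boundary integral, I would assume the boundary contributions are annihilated by the chosen (e.g. periodic) boundary conditions, exactly as in the hyperbolic-solver conservation assumption of Section~\ref{sec:assumpHyp}, and note that this is the fully discrete analogue in which the $L^2$ integral of $|\nabla\varphi_h|_{\ell^2}^2$ survives verbatim while the kinetic piece is expressed as the algebraic sum over support points demanded by compatibility with the hyperbolic update.
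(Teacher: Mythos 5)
Your strategy coincides with the paper's own proof: recover the symmetric, uncondensed coupled system from \eqref{eq:fully_discrete_theta_scheme_condensed} by exploiting the pointwise (diagonal) nature of the lumped inner product --- both the transpose identity $\langle\mathcal A\vec v_h,\vec z_h\rangle_h=\langle\vec v_h,\mathcal A^{\mathsf T}\vec z_h\rangle_h$ and the admissibility of density-weighted test functions --- then test with $(\alpha\theta)^{-1}\varphi_h^{n+\theta}$ and $\theta^{-1}I_h(\rho_h^n\vec v_h^{n+\theta})$, kill the cross-product contribution pointwise, and conclude with the polarization identity applied once in $\langle\cdot,\cdot\rangle_h$ and once in $L^2$. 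Two slips are worth fixing. First, your velocity test function $\theta^{-1}\rho_h^n\varphi_h^n\vec v_h^{n+\theta}$ carries a spurious factor $\varphi_h^n$ (apparently propagated from the typo in the paper's proof of Lemma~\ref{lem:semi_discrete_parabolic_energy_balance}, where $\varphi^n$ should read $\rho^n$); taken literally it produces $\varphi^n$-weighted kinetic terms that do not cancel, and it is inconsistent with your own claim that this test reproduces $\langle\rho_h^n\mathcal B_{n,\theta}\vec v_h^{n+\theta},\vec v_h^{n+\theta}\rangle_h$ --- the correct choice is $\theta^{-1}I_h(\rho_h^n\vec v_h^{n+\theta})$, exactly as in the paper. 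Second, the boundary-condition hedge at the end is unnecessary: unlike the semi-discrete Lemma~\ref{lem:semi_discrete_parabolic_energy_balance}, whose surface terms arise from integrating the strong form by parts, the fully discrete scheme is already posed in weak form with the momentum paired directly against $\nabla\psi_h$, so no integration by parts occurs in the proof and the identity \eqref{eq:fully_discrete_parabolic_energy_balance} holds exactly without assuming periodic or otherwise energy-conserving boundary conditions.
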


\begin{proof}
  We first observe that the following identity holds for any
  $\vec v_h, \vec z_h \in \mathcal C^0(\mathcal T_h)^d$ and any
  $\mathcal A \in \mathbb R^{d\times d}$:
  \begin{equation*}
    \langle \mathcal A \vec v_h, \vec z_h \rangle_h
    =
    \langle \vec v_h, \mathcal A^\mathsf T \vec z_h \rangle_h,
  \end{equation*}
  where $\mathcal A^\mathsf T$ is the transpose of $\mathcal A$. Thus, by
  replacing $\vec z_h$ with $I_h(\mathcal B_{n,\theta}^\mathsf T\vec z_h)$ in
  the second equation of \eqref{eq:fully_discrete_theta_scheme_condensed} and
  using \eqref{eq:definition_bntheta}, we obtain the identity
  \begin{equation*}
    \left\langle \vec v_h^{n+\theta}, \vec z_h \right\rangle_h
    =
    \left\langle
      \vec v_h^n,
      \vec z_h
    \right\rangle_h
      - \theta \tau_n
    \left\langle
      \nabla \varphi_h^{n+\theta},
      \vec z_h
    \right\rangle_h
    +
    \theta \tau_n
    \left\langle
      \vec v_h^{n+\theta} \times \vec \Omega,
      \vec z_h
    \right\rangle_h.
  \end{equation*}
  Next, due to the use of the lumped inner product, the second equation of
  \eqref{eq:fully_discrete_theta_scheme_condensed} also holds true when
  replacing $\boldsymbol z_h$ with $I_h(\rho_h^n\boldsymbol z_h)$:
  \begin{align*}
    \big\langle\vec v_h^{n+\theta}, \rho_h^n\vec z_h\big\rangle_h
    =
    \big\langle\mathcal B_n^{-1}\big(\vec v_h^n -
    \theta\tau_n\nabla\varphi_h^{n+\theta}\big), \rho_h^n\vec z_h
    \big\rangle_h.
  \end{align*}
  Rearranging the first equation of
  \eqref{eq:fully_discrete_theta_scheme_condensed} and substituting the
  above identities recovers a symmetric version of the update:
  \begin{equation}
    \label{eq:fully_discrete_theta_scheme_symmetric}
    \begin{cases}
      \begin{aligned}
        \left(\nabla \varphi_h^{n+\theta}, \nabla \psi_h\right)
        &=
        \left(\nabla \varphi_h^n, \nabla \psi_h\right)
        +
        \theta\tau_n\alpha\,
        \big\langle\rho_h^n \vec v_h^{n+\theta}, \nabla \psi_h \big\rangle_h,
        &\;& \forall\,\psi_h\in\mathbb{H}_h,
        \\[0.5em]
        \left\langle \vec v_h^{n+\theta}, \vec z_h \right\rangle_h
        &=
        \left\langle
          \vec v_h^n - \theta \tau_n \nabla \varphi_h^{n+\theta}
          +
          \theta \tau_n \vec v_h^{n+\theta} \times \vec \Omega,
          \vec z_h
        \right\rangle_h,
        &\;& \forall\,\vec z_h\in\mathbb{V}_h^d.
      \end{aligned}
    \end{cases}
  \end{equation}
  The result now follows similarly to the proof of
  Lemma~\ref{lem:semi_discrete_parabolic_energy_balance} by testing with
  \begin{align*}
    \psi_h:=(\alpha\theta)^{-1}\varphi_h^{n+\theta},
    \quad\text{and}\quad\vec z_h:=\theta^{-1}I_h(\rho_h^{n}\vec v^{n+\theta}_h).
  \end{align*}
\end{proof}
We are now in a position to formulate the full source update procedure.
\begin{definition}[Full source update procedure on conserved states]
  \label{defi:source_update}
  Given a conserved state $\big[\rho^n_h,\vec m^n_h, \mathcal
  E^n_h]^T\in\mathbb V_h^{d+2}$ and $\varphi^n_h\in\mathbb H_h$ at time
  $t_n$ and a given new time $t_{n+1}=t_n+\tau_n$ an update is computed as
  follows:
  \begin{itemize}
    \item[--]
      Compute a velocity field $\vec v^n_h\in\mathbb V_h^d$ and internal
      energy per unit volume $\inte^n_h\in\mathbb V_h$ by setting the
      corresponding nodal degrees of freedom to
      \begin{align*}
        \vec v_{i,K}^n := \vec m_{i,K}^n / \rho_{i,K}^n,
        \qquad
        \inte_{i,K}^n := \mathcal E_{i,K}^n - \tfrac12\big|\vec
        m_{i,K}^n\big|_{\ell^2}^2 / \rho_{i,K}^n.
      \end{align*}
    \item[--]
      Compute an update $\vec v^{n+1}_h\in\mathbb V_h^{d}$ and
      $\varphi^{n+1}_h\in\mathbb H_h$ for time $t_{n+1}$ by solving
      \eqref{eq:fully_discrete_theta_scheme_condensed}.
    \item[--]
      Reconstruct the momentum $\vec m^{n+1}_h\in\mathbb V_h^d$ and total
      mechanical energy $\mathcal E_{h}^{n+1} \in\mathbb V_h$ by setting the
      corresponding nodal degrees of freedom to
      \begin{align*}
        \vec m_{i,K}^{n+1} := \rho_{i,K}^n\,\vec m_{i,K}^{n+1},
        \qquad
         \mathcal E_{i,K}^{n+1} := \inte_{i,K}^n \,+\,
         \tfrac12\rho_{i,K}^n\big|\vec v_{i,K}^{n+1} \big|_{\ell^2}^2.
      \end{align*}
    \item[--] Set $\rho_h^{n+1} = \rho_h^{n}$.
  \end{itemize}
\end{definition}
\begin{corollary}
  The update procedure outlined in Definition~\ref{defi:source_update}
  maintains an energy inequality, i.\,e.,
  \begin{multline}
    \label{eq:fully_discrete_total_energy_balance}
    \sum_{j=1}^{\mathcal{N}}\sum_{K\in\mathcal{T}_h}\frac{1}{2}m_{j,K}
    \mathcal E_{i,K}^{n+1}
    \,+\,
    \frac1{2\alpha}\int_\domain\big|\nabla\varphi_h^{n+1}
    \big|_{\ell^2}^2\,\mathrm{d}x
    \\
    \le\;\sum_{j=1}^{\mathcal{N}}\sum_{K\in\mathcal{T}_h}\frac{1}{2}m_{j,K}
    \mathcal E_{i,K}^{n}
    \,+\, \frac1{2\alpha}\int_\domain\big|\nabla\varphi_h^{n}
    \big|_{\ell^2}^2\,\mathrm{d}x ,
  \end{multline}
  with equality holding for the case of $\theta=1/2$. Furthermore, by
construction the density $\rho^n_{i,K}$ and the internal energy per unit volume
$\inte^n_{i,K}$ are not modified.
\end{corollary}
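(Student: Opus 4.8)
The plan is to reduce the claimed total-energy inequality to the parabolic energy balance already established in Proposition~\ref{prop:fully_discrete_parabolic_energy_balance}, exploiting that the source update leaves density and internal energy untouched node by node. I would dispatch the second, easier assertion first: inspecting Definition~\ref{defi:source_update}, the density values $\rho^n_{i,K}$ are only \emph{used} (to form $\vec v^n_h$, to assemble $a^{n,\theta}_h$, and in the reconstruction step) but never overwritten, while the nodal internal energies $\inte^n_{i,K}$ are computed once and re-inserted verbatim into $\mathcal E^{n+1}_{i,K}$. Since $\vec m^{n+1}_{i,K}=\rho^n_{i,K}\vec v^{n+1}_{i,K}$ and the density is frozen, extracting the internal energy from the reconstructed state returns exactly $\inte^n_{i,K}$; hence both density and internal energy are preserved, which is the final sentence of the statement.

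For the energy inequality the crucial observation is the node-wise split supplied by the reconstruction formula,
\begin{equation*}
  \mathcal E^{n+1}_{i,K} \,=\, \inte^n_{i,K} \,+\, \tfrac12\,\rho^n_{i,K}\,\big|\vec v^{n+1}_{i,K}\big|_{\ell^2}^2,
  \qquad
  \mathcal E^{n}_{i,K} \,=\, \inte^n_{i,K} \,+\, \tfrac12\,\rho^n_{i,K}\,\big|\vec v^{n}_{i,K}\big|_{\ell^2}^2,
\end{equation*}
the second identity being just the definitions $\vec v^n_{i,K}=\vec m^n_{i,K}/\rho^n_{i,K}$ and $\inte^n_{i,K}=\mathcal E^n_{i,K}-\tfrac12|\vec m^n_{i,K}|^2_{\ell^2}/\rho^n_{i,K}$ rearranged. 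Summing $m_{j,K}\mathcal E^{\,\cdot}_{i,K}$ against the lumped weights, the internal-energy contribution $\sum_{j,K} m_{j,K}\inte^n_{i,K}$ is \emph{identical} at the old and new time levels and therefore cancels when the two sides are compared. What remains on each level is precisely the discrete kinetic energy $\sum_{j,K}\tfrac12 m_{j,K}\rho^n_{j,K}|\vec v^{\,\cdot}_{j,K}|^2_{\ell^2}$ together with the field energy $\tfrac1{2\alpha}\int_\domain|\nabla\varphi_h^{\,\cdot}|^2_{\ell^2}\,\mathrm dx$, i.e.\ exactly the two quantities balanced in \eqref{eq:fully_discrete_parabolic_energy_balance}.

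I would then invoke Proposition~\ref{prop:fully_discrete_parabolic_energy_balance} verbatim. Its balance equates the kinetic-plus-field energy at $t_{n+1}$, augmented by the defect term
\begin{equation*}
  \big(\theta-\tfrac12\big)\Big\{\,\sum_{j,K}\tfrac12 m_{j,K}\rho^n_{j,K}\big|\vec v^{n+1}_{j,K}-\vec v^n_{j,K}\big|^2_{\ell^2} \,+\, \tfrac1{2\alpha}\int_\domain\big|\nabla\varphi^{n+1}_h-\nabla\varphi^n_h\big|^2_{\ell^2}\,\mathrm dx\,\Big\},
\end{equation*}
to the same quantity at $t_n$. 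Because this defect term is a nonnegative multiple of a sum of squared norms whenever $\theta\ge\tfrac12$, it may be discarded to produce the asserted inequality, and since its prefactor $(\theta-\tfrac12)$ vanishes exactly at $\theta=\tfrac12$, the inequality collapses to an equality in the Crank--Nicolson case. Restoring the common sum $\sum_{j,K} m_{j,K}\inte^n_{i,K}$ on both sides reassembles the total mechanical energy and closes the argument.

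The only genuinely delicate point, and the one I would verify most carefully, is the bookkeeping that aligns the kinetic term of the reconstructed $\mathcal E^{n+1}$ with the kinetic term of \eqref{eq:fully_discrete_parabolic_energy_balance}: both must carry the \emph{frozen} density $\rho^n_{j,K}$ rather than any updated value, which is exactly what the source step guarantees via $\partial_t\rho=0$. Once that alignment and the sign of $(\theta-\tfrac12)$ are confirmed, the corollary follows with no further estimates --- the distinction between the strictly dissipative regime $\theta>\tfrac12$ and the energy-conservative regime $\theta=\tfrac12$ being the sole structural content beyond Proposition~\ref{prop:fully_discrete_parabolic_energy_balance}.
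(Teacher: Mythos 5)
Your proof is correct and matches the paper's (implicit) argument: the paper states this corollary without a written proof precisely because it follows immediately from Proposition~\ref{prop:fully_discrete_parabolic_energy_balance} once one observes, as you do, that the reconstruction in Definition~\ref{defi:source_update} freezes the nodal internal energies $\inte^n_{i,K}$, so the total-energy difference reduces exactly to the kinetic-plus-field balance of the proposition, with the sign of the $(\theta-\tfrac12)$ defect term deciding inequality ($\theta>\tfrac12$) versus equality ($\theta=\tfrac12$). Note only that you implicitly (and correctly) treat the factor $\tfrac12$ in front of $m_{j,K}\mathcal{E}_{i,K}$ in \eqref{eq:fully_discrete_total_energy_balance} as a typographical artifact, since the inequality as literally written with that factor would not follow from the proposition.
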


\subsection{Restart strategies for the Gauß law}
The numerical scheme for the source update
\eqref{eq:fully_discrete_theta_scheme_condensed} does not preserve the Gauß
law. That is, a discrete counterpart of $-\Delta\varphi=\alpha\rho$ does
not necessarily hold true any more for the final update $\{\rho_h^{n+1},
\varphi_h^{n+1}\}$ obtained after a Strang split
(Definition~\ref{def:strang_splitting}), or, equivalently, a high-order
IMEX step. In this section, we follow a strategy outlined in \cite[Section
4]{eulerpoisson23} to postprocess the potential $\varphi_h^{n+1}$ in order
to reestablish a discrete Gauß law between $\varphi_h^{n+1}$ and
$\rho_h^{n+1}$. One possible restart strategy is to fully reset the
potential $\varphi_h^{n+1}$ at the end of the Strang (or IMEX) step:
\begin{definition}[Full Gauß law restart]
  \label{def:full_restart}
  Let $\{\rho_h^{n+1}, \varphi_h^{n+1}\}$ be the final density and
  potential obtained after a full strang split
  (Definition~\ref{def:strang_splitting}), or an equivalent IMEX update
  procedure. Then, compute $\widetilde \varphi_h^{n+1}\in\mathbb H_h$
  satisfying the discrete Gauß law:
  \begin{equation}
    \label{eq:discrete_gauss_law}
    (\nabla\widetilde\varphi_h^{n+1},\nabla\omega_h)
    =
    \alpha\left\langle\rho_h^{n+1},\omega_h\right\rangle_h
    \quad \text{for all}\; \omega_h \in \mathbb H_h,
  \end{equation}
  and set $\varphi_h^{n+1} \leftarrow \widetilde \varphi_h^{n+1}$.
\end{definition}
This process comes at the expense of losing the energy stability
established in Proposition
\ref{prop:fully_discrete_parabolic_energy_balance}. Fortunately, the next
result shows that the deviation introduced by this restart strategy is
small.
\begin{proposition}
  Assume that the hyperbolic update is at least first-order accurate in
  time and space and subject to a hyperbolic CFL condition $\tau_n =
  \mathcal O(h)$. Then, introducing the \emph{Gauß law residual}
  $\mathcal{R}_h^{n+1}[\omega_h] \,=\, \alpha\,
  \langle\rho_h^{n+1},\omega_h\rangle_h
  -(\nabla\Epot_h^{n+1},\nabla\omega_h)$ we have that
  \begin{align}
    \label{eq:gauss_law_residual}
    \big\|\mathcal{R}_h^{n+1}[\,.\,] -
    \mathcal{R}_h^{n}[\,.\,]\big\|_{\mathrm{op}}\;=\; \alpha\, \mathcal
    O\big(h^2+h^{1+s}\big),
  \end{align}
  where $s$ is the constant introduced in Assumption~\ref{ass:regular_mesh}
  and
  \begin{equation*}
    \|\mathcal R_h\|_{\mathrm{op}} :=
    \sup_{\substack{\omega_h \in \mathbb H_h \\ \omega_h \neq 0}}
    \frac{|\mathcal R_h[\omega]|}{\|\omega_h\|_{H^1(D)}}
  \end{equation*}
  for operators $\mathcal R_h : \mathbb H_h \to \mathbb R$.
\end{proposition}

\begin{proof}
  Let $\big\{[\rho^n_h, \vec m^n_h, \mathcal{E}^n_h], \varphi_h^n\big\}$ be
  the computed state at time $t_n$ and let the discrete state
  $\big\{[\rho^{n+1}_h, \vec m^{n+1}_h, \mathcal{E}^{n+1}_h],
  \varphi_h^{n+1}\big\}$ be the final update obtained after a full Strang
  split (Definition~\ref{def:strang_splitting}). Let $\hat\rho_h^{n}$ be
  the intermediate density update with which the source update is performed
  in the Strang split, and let $\breve {\vec v}^{n+\theta}_h$ be the
  intermediate velocity obtained from the theta scheme for the source
  update; see Definition~\ref{def:strang_splitting} and
  \eqref{eq:fully_discrete_theta_scheme_condensed}. We recall that
  $\varphi^{n+\theta}=\theta\varphi^{n+1}+(1-\theta)\varphi^{n}$ and by
  slight abuse of notation we also introduce a density
  $\rho^{n+\theta}:=\theta\rho^{n+1}+(1-\theta)\rho^{n}$. Recalling
  \eqref{eq:fully_discrete_theta_scheme_symmetric}, we then have
  \begin{align*}
    \mathcal{R}_h^{n+\theta}[\omega_h] - \mathcal{R}_h^{n}[\omega_h]
    \,&=\, \alpha\,
    \langle\rho_h^{n+\theta}-\rho_h^{n},\omega_h\rangle_h
    -(\nabla\varphi_h^{n+\theta}-\nabla\varphi_h^{n},\nabla\omega_h)
    \\
    \,&=\, \alpha\,
    \langle\rho_h^{n+\theta}-\rho_h^{n},\omega_h\rangle_h
    - \theta\tau_n\alpha\, \big\langle\hat\rho_h^n \breve{\vec
    v}_h^{n+\theta}, \nabla \omega_h \big\rangle_h,
  \end{align*}
  We observe that the right-hand side of the above equation resembles a
  discretization of the balance of mass equation $\partial_t \rho + \nabla
  \cdot \vec m = 0$ with a $\theta$ time stepping scheme:
  \begin{align*}
    \langle\tilde\rho_h^{n+\theta},\omega_h\rangle_h + \theta\tau_n\alpha\,
    \big(\tilde\rho_h^{n+\theta} \breve{\vec v}_h^{n+\theta}, \nabla
    \omega_h \big) = \langle\rho_h^{n},\omega_h\rangle_h,
  \end{align*}
  where $\tilde\rho_h^{n+\theta}$ is a formal density update with a given
  (interpolatory) velocity field $\breve{\vec v}_h^{n+\theta}$ at time
  $t_{n+\theta}$. From our assumptions on the hyperbolic solver we now make
  the claim that $\|\tilde\rho_h^{n+\theta} - \rho_h^{n+\theta}\|=\mathcal
  O(\tau_n^2)=\mathcal O(h^2),$ which would need to be verified in detail
  for the concrete chosen method. The switch between the lumped inner
  product and the full $L^2$-product introduces an additional error of the
  order $\mathcal O(\tau_n h^s)=\mathcal O(h^{1+s})$.
\end{proof}

\begin{remark}
  The result can be improved to $\big\|\mathcal{R}_h^{n+1}[\,.\,] -
  \mathcal{R}_h^{n}[\,.\,]\big\|_{\mathrm{op}}= \alpha\, \mathcal
  O(h^3+h^{1+s})$ for the case of a Strang split with a hyperbolic update
  of at least second order and a source update with $\theta = 1/2$.
\end{remark}
\begin{corollary}\label{cor:gauss_law_residual}
  Let $\widetilde \varphi_h^{n+1}$ be the solution to
  \eqref{eq:discrete_gauss_law} and $\varphi_h^{n+1}$ the solution to
  \eqref{eq:fully_discrete_theta_scheme_condensed}.
  Now, observing that $\mathcal R_h^{n}[\tilde \varphi_h^{n} -
  \varphi_h^{n}]= \|\nabla \tilde \varphi_h^{n} -
  \nabla\varphi_h^{n}\|_{\Ltwod}^2$ and using
  \eqref{eq:gauss_law_residual} in a telescopic sum establishes
  \begin{align*}
    \|\nabla \tilde \varphi_h^{n} - \nabla\varphi_h^{n}\|_{\Ltwod}
    &=\,(t_n-t_0)\,\alpha\,\mathcal{O}\big(h+h^s\big),
    \\
    \|\nabla\widetilde{\Epot}_h^{n+1}\|^2_{\Ltwod}
    - \|\nabla\Epot_h^{n+1}\|^2_{\Ltwod}
    &=\,(t_n-t_0)\,\alpha\,\mathcal{O}\big(h+h^s\big).
  \end{align*}
  Similarly, higher-order convergence of $\mathcal{O}(h^2+h^s)$ holds true
  for the case of a Strang split with a hyperbolic update of at least
  second order and a source update with $\theta=1/2$.
\end{corollary}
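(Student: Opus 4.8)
The plan is to read the Gauß-law residual as a bounded linear functional on $\mathbb H_h$ whose Riesz representative is exactly the restart correction, and then to propagate the per-step bound \eqref{eq:gauss_law_residual} through a telescopic sum. First I would record that, since $\widetilde\varphi_h^{n}$ solves the discrete Gauß law \eqref{eq:discrete_gauss_law} while $\varphi_h^{n}$ does not, subtracting the two definitions gives $\mathcal R_h^{n}[\omega_h] = \big(\nabla(\widetilde\varphi_h^{n}-\varphi_h^{n}),\nabla\omega_h\big)$ for every $\omega_h\in\mathbb H_h$. Hence $\mathcal R_h^{n}[\,\cdot\,]$ is represented, with respect to the $\|\nabla\,\cdot\,\|_{L^2(\domain)}$ inner product, by $\widetilde\varphi_h^{n}-\varphi_h^{n}\in\mathbb H_h$, so that its operator norm equals $\|\nabla(\widetilde\varphi_h^{n}-\varphi_h^{n})\|_{L^2(\domain)}$ exactly. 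Testing with $\omega_h=\widetilde\varphi_h^{n}-\varphi_h^{n}$ reproduces the stated observation $\mathcal R_h^{n}[\widetilde\varphi_h^{n}-\varphi_h^{n}]=\|\nabla(\widetilde\varphi_h^{n}-\varphi_h^{n})\|_{L^2(\domain)}^2$ and confirms this identification.

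Second, I would bound the operator norm by telescoping over the time steps. Because the compatibility condition \eqref{eq:poisson_compatibility} is enforced initially (and a restart is applied at $t_0$), the initial residual vanishes, $\mathcal R_h^{0}[\,\cdot\,]\equiv0$. Writing $\mathcal R_h^{n}[\,\cdot\,]=\sum_{k=0}^{n-1}\big(\mathcal R_h^{k+1}[\,\cdot\,]-\mathcal R_h^{k}[\,\cdot\,]\big)$ and applying the triangle inequality together with the per-step estimate \eqref{eq:gauss_law_residual} yields $\|\mathcal R_h^{n}[\,\cdot\,]\|_{\mathrm{op}}\le n\,\alpha\,\mathcal O(h^2+h^{1+s})$. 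Under the hyperbolic CFL condition $\tau_n=\mathcal O(h)$ the number of steps satisfies $n=\mathcal O\big((t_n-t_0)/h\big)$, which absorbs one power of $h$ and gives $\|\mathcal R_h^{n}[\,\cdot\,]\|_{\mathrm{op}}=(t_n-t_0)\,\alpha\,\mathcal O(h+h^s)$. Combined with the exact identification of the operator norm from the first step, this is precisely the first displayed estimate; the sharper $\mathcal O(h^2+h^s)$ rate then follows verbatim from the improved per-step bound $\alpha\,\mathcal O(h^3+h^{1+s})$ available when $\theta=1/2$ and the hyperbolic update is at least second order.

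Third, for the energy-difference estimate I would use a difference-of-squares identity, $\|\nabla\widetilde\varphi_h^{n+1}\|_{L^2(\domain)}^2-\|\nabla\varphi_h^{n+1}\|_{L^2(\domain)}^2=\big(\nabla(\widetilde\varphi_h^{n+1}-\varphi_h^{n+1}),\nabla(\widetilde\varphi_h^{n+1}+\varphi_h^{n+1})\big)$, and bound its modulus by $\|\nabla(\widetilde\varphi_h^{n+1}-\varphi_h^{n+1})\|_{L^2(\domain)}\big(\|\nabla\widetilde\varphi_h^{n+1}\|_{L^2(\domain)}+\|\nabla\varphi_h^{n+1}\|_{L^2(\domain)}\big)$ via Cauchy-Schwarz and the triangle inequality. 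The first factor is $(t_n-t_0)\,\alpha\,\mathcal O(h+h^s)$ by the first estimate applied at level $n+1$, while the second factor is $\mathcal O(1)$ once a uniform-in-$h$ bound on the Dirichlet energies $\|\nabla\varphi_h^{n+1}\|_{L^2(\domain)}$ and $\|\nabla\widetilde\varphi_h^{n+1}\|_{L^2(\domain)}$ is in hand --- the former from the energy stability of Proposition~\ref{prop:fully_discrete_parabolic_energy_balance}, the latter from the discrete Poisson solve \eqref{eq:discrete_gauss_law} with bounded density data.

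The main obstacle is not the telescoping but the two quantitative inputs it rests on. The delicate one is the estimate deferred in the preceding proposition, namely that the formal $\theta$-scheme mass-balance consistency error satisfies $\|\tilde\rho_h^{n+\theta}-\rho_h^{n+\theta}\|=\mathcal O(\tau_n^2)$ for the concrete hyperbolic solver in use; since every order downstream is inherited from this bound, the sharpness of the final rate hinges entirely on verifying it. A secondary, more technical subtlety is the norm against which $\|\mathcal R_h^{n}[\,\cdot\,]\|_{\mathrm{op}}$ is measured: the functional lives on $\mathbb H_h$ paired through $\|\nabla\,\cdot\,\|_{L^2(\domain)}$, so one must either impose boundary conditions that make $\|\nabla\,\cdot\,\|_{L^2(\domain)}$ a genuine norm on $\mathbb H_h$ or pass to the quotient by constants, and one must check that the self-duality test function $\widetilde\varphi_h^{n}-\varphi_h^{n}$ remains admissible in that space.
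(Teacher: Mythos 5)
Your proposal is correct and takes essentially the same route as the paper: the paper's compressed one-line argument is exactly your Riesz identification $\mathcal R_h^{n}[\omega_h]=\big(\nabla(\widetilde\varphi_h^{n}-\varphi_h^{n}),\nabla\omega_h\big)$, followed by the telescopic sum of the per-step bound \eqref{eq:gauss_law_residual} with the CFL-driven step count $n=\mathcal{O}\big((t_n-t_0)/h\big)$, and a difference-of-squares estimate for the energy gap. Your additional caveats --- the vanishing initial residual as the base of the telescoping, the need for uniform Dirichlet-energy bounds in the second estimate, the seminorm-versus-norm issue on $\mathbb H_h$, and the fact that the final rate inherits the deferred $\mathcal O(\tau_n^2)$ density-consistency claim from the preceding proposition --- are sound elaborations of details the paper leaves implicit.
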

It is desirable to reestablish a discrete energy balance again after performing
restart \cite{eulerpoisson21, eulerpoisson22, eulerpoisson23}. This can
sometimes be achieved via an artificial relaxation approach by lowering
the kinetic energy appropriately~\cite{eulerpoisson23}. We summarize:
\begin{definition}[Gauß law restart with relaxation]\label{def:relaxation}
  Let $\{\rho_h^{n+1}, \varphi_h^{n+1}\}$ be the final density and
  potential obtained after a full Strang split
  (Definition~\ref{def:strang_splitting}), and let $\widetilde
  \varphi_h^{n+1}\in\mathbb H_h$ be the solution to
  \eqref{eq:discrete_gauss_law}. Introduce
  \begin{align*}
    \delta\mathcal{E}^{n+1} \;&:=\; \frac{1}{2\,\alpha}
    \Big(\|\nabla\tilde\varphi_h^{n+1}\|_{\Ltwod}^2 -
\|\nabla\varphi_h^{n+1}\|_{\Ltwod}^2\Big)
    \\[0.125em]
    \mathcal{E}_{\mathrm{kin}}^{n+1} \;&:=\;
    \sum_{i} \frac{m_i}{2}\rho_i^n\big|\vec v^{n+1}\big|_{\ell^2}^2,
  \end{align*}
  and define the relaxation coefficient as
  \begin{equation*}
    r^{n+1} :=
    \begin{cases}
      \sqrt{
        1 - \frac{\delta \mathcal E^{n+1}}{\mathcal E_{\mathrm{kin}}^{n+1}}
      }
      & \text{ if } 0 < \delta \mathcal E^{n+1} \leq
      \mathcal E_{\mathrm{kin}}^{n+1}, \\[1em]
      1 & \text{else}.
    \end{cases}
  \end{equation*}
  Then, set
  \begin{align*}
    \varphi_h^{n+1} \;\leftarrow\; \widetilde \varphi_h^{n+1},
    \qquad
    \vec m_i^{n+1} \;\leftarrow\;
    r^{n+1}\,\vec m_i^{n+1}.
  \end{align*}
\end{definition}
\begin{remark}[Relaxation]\label{rem:relaxation}
  Relaxation of the kinetic energy only occurs if \\restarting the Gauß law
  leads to an increase in potential energy which does not exceed the amount
  of kinetic energy in the system. In systems with large electromagnetic
  forces that balance out to yield relatively slow macroscopic motion, such
  as the diocotron instability in Section~\ref{subse:numerical_diocotron},
  or in systems where restarting the potential lowers the potential energy,
  relaxation does not occur.
\end{remark}


\section{Numerical illustrations}
\label{sec:numerical}
We now present a number of computational results that illustrate the
performance of the method (a) on a smooth test case (see
Section~\ref{subse:numerical_smooth}, and (b) on a problem in the magnetic
drift limit with huge scale separation of the time scales (see
Section~\ref{subse:numerical_diocotron}).

\subsection{Implementational details}
\label{subse:implementation}

The numerical algorithms discussed above have been implemented in the
hydrodynamic solver framework \texttt{ryujin}~\cite{euler21,
navierstokes22}, which is based on the finite element library
\texttt{deal.II}~\cite{dealII96, dealIIcanonical}. Our implementation is
freely available
online\footnote{\url{https://github.com/conservation-laws/ryujin}} under a
permissible open source
license.\footnote{\url{https://spdx.org/licenses/Apache-2.0.html}} Details
on the invariant-domain preserving solver for the hyperbolic subsystem
\eqref{eq:hyperbolic_operator} as discussed in Appendix~\ref{sec:HypSolver}
can be found in \cite{euler21}, the discretization with discontinuous
finite elements is discussed in \cite{euler25}.

The source update operator in turn is discretized with \texttt{deal.II}'s
matrix-free operator framework~\cite{Kronbichler2012, Kronbichler2018,
Kronbichler2019b}: The action of the stiffness matrix described by the
bilinear form \eqref{eq:discrete_bilinear_form} is implemented by
repeatedly reconstructing the local stencil of the system matrix whenever a
matrix-vector multiplication is performed. We refer the reader to
\cite{navierstokes22} for a related, detailed discussion and cost analysis
of the matrix-free operator approach applied to such source systems. We
note that the dominant operator in \eqref{eq:discrete_bilinear_form} is a
Laplacian and not a mass matrix, irrespective of the chosen time-step size.
This is, for example, in contrast to the parabolic diffusion operator
encountered in the compressible Navier-Stokes equations when performing an
operator split~\cite{navierstokes22}. In addition, the boundedness constant
of \eqref{eq:discrete_bilinear_form} scales with
$\mathcal{O}(\tau_n\omega_p^2)$; see Proposition~\ref{prop:boundedness}.
For these reasons we settled on using a geometric multigrid preconditioner
with Chebyshev smoother \cite{Kronbichler2018,Clevenger2021}. After careful
tuning we observe that solving the source update
\eqref{eq:fully_discrete_theta_scheme_condensed} has the same computational
cost as performing the explicit hyperbolic update: For the numerical
results discussed below in Section~\ref{subse:numerical_diocotron} we spend
around 28\,\% of the wall time in the source update, while the remaining
72\,\% of the wall time was split between two hyperbolic updates (with a
third order Runge-Kutta scheme), vector transfer and solution output.

\subsection{Isentropic vortex}
\label{subse:numerical_smooth}
We consider the smooth isentropic vortex solution adapted to the case of
the Euler-Poisson system \eqref{eq:magnetic_euler_poisson} with
$\vec\Omega=\vec 0$; see \cite[Definition~5.2]{eulerpoisson23}.
The isentropic vortex is an analytic solution of the Euler-Poisson system when
the system is augmented with a background density that exactly cancels the
effect of the density distribution leading to $\varphi = \mathrm{const}_x$.

The exact solution is given by \cite[Definition~5.2]{eulerpoisson23}:
\begin{align*}
  \boldsymbol{r}(\xcoord, t) &:= \xcoord - \xcoord_0 - M t,
  \quad
  f(\xcoord, t) := \tfrac{\beta}{2 \pi}
  e^{\frac{1}{2} (1 - |\boldsymbol{r}|^2)},
  \quad
  T(\xcoord,t) := 1 - \tfrac{\gamma - 1}{2 \gamma} f^2,
\end{align*}
with vortex speed $M=[1,1]^T$, $\gamma = 5/3$, vortex size $\beta=5$,
coupling constant $\alpha = 1$, and magnetic field density $\vec\Omega=\vec
0$. The domain is $\domain=[-5,5]^2$, and the initial center of the vortex
is $\xcoord_0 = [-1,-1]^T\in\domain$. We approximate the solution to
\eqref{eq:magnetic_euler_poisson} using our numerical method on a sequence
of successively refined uniform meshes for $\domain$. For initial
conditions, we interpolate the exact solution at time $t = 0$ to the finite
element spaces $\mathbb V_h$, $\mathbb H_h$ defined in Section
\ref{sec:finite_elements}. For boundary conditions, we impose
non-homogeneous Dirichlet boundary conditions using the exact solution.

We construct a low order method by using a forward Euler step for the
hyperbolic part and a backward Euler step for the parabolic part (by
setting $\theta=1$ in \eqref{eq:fully_discrete_theta_scheme_condensed}).
Correspondingly, a high order method is constructed by combining an
optimal, third order explicit Runge-Kutta time stepping scheme
\cite{Ern_Guermond_2022} for the hyperbolic part and the Crank-Nicoloson
scheme (by setting $\theta=1/2$ in
\eqref{eq:fully_discrete_theta_scheme_condensed}) for the source update in
a Strang split. The CFL parameter for the hyperbolic update is set to
$0.1$; see \cite{GuePoTom2019}. At the final time step $t_N := t_F$, we
compute the $L^1$ error
\begin{align*}
  \delta_h := \|\rho(t_F) - \rho_h^N\|_{L^1(\domain)} + \|\vec m(t_F) - \vec
  m_h^N\|_{\vec L^1(\domain)} + \|\mathcal E(t_F) - \mathcal
E_h^N\|_{L^1(\domain)}.
\end{align*}
For our simulations, we set $t_F := 1$. We report the results in
Table~\ref{tab:isentropic_vortex} for both the low-order and high-order method.
We recover the expected optimal convergence rates.
\begin{table}[t]
  \centering
  \subfloat[Low order method]{%
  \begin{tabular}{@{}rcr@{}}
    \toprule
    {\bfseries dofs} & {\bfseries $\boldsymbol{\delta_h}$} &
    {\bfseries rate} \\[0.25em]
    4096   & 1.220e+01 & \multicolumn{1}{l}{} \\
    16384  & 7.366e+00 & 0.73 \\
    65536  & 4.091e+00 & 0.85 \\
    262144 & 2.165e+00 & 0.91 \\
    \bottomrule
  \end{tabular}}
  \hspace{1em}
  \subfloat[High order method]{%
  \begin{tabular}{@{}rcr@{}}
    \toprule
    {\bfseries dofs} & {\bfseries $\boldsymbol{\delta_h}$} &
    {\bfseries rate} \\[0.25em]
    4096   & 3.827e-01 & \multicolumn{1}{l}{} \\
    16384  & 9.118e-02 & 2.07 \\
    65536  & 2.210e-02 & 2.04 \\
    262144 & 5.438e-03 & 2.02 \\
    \bottomrule
  \end{tabular}}

  \caption{%
    \label{tab:isentropic_vortex}
    Smooth isentropic vortex test: $L^1$-error $\delta_h$ at the final time
    $t_F$ for varying global refinement levels and corresponding
    convergence rate.}
\end{table}

\subsection{Diocotron instability}
\label{subse:numerical_diocotron}
Next, we consider the diocotron instability \cite{Filbet2016II, Piao2018,
Hamiaz2016, Tsidulko2004, Petri2002, Petri2006, Petri2009} that arises in the
context of cold plasma physics. The diocotron instability occurs in an
experimental setup where an essentially hollow cylindrical beam of electrons is
aligned with an out-of-plane magnetic field. Such an electron profile
experiences an instability caused by \emph{$\vec E\times\vec B$
drift-rotational shear}. We use model \eqref{eq:magnetic_euler_poisson} with
the isothermal equation of state $p = \theta \rho$, where $\theta \geq 0$ is
the temperature of the plasma. This choice of closure is fundamental for the
design a meaningful computational experiment. Since $\theta$ is a model
parameter and the corresponding thermal sound speed is given by $c =
\sqrt{\theta}$, we can make thermal sound speed as small as we wish. For
instance, we can make the thermal sound speed negligible in comparison to
material velocity $\vec v$. Similarly, with a proper choice of the parameters
$\alpha$ and  $|\vec \Omega|_{\ell^2}$, we can make plasma time scales orders
of magnitude larger that the hydrodynamic time-scales. With this setup, we can
force, onto the magnetic Euler-Poisson model \eqref{eq:magnetic_euler_poisson},
the conditions required to operate in the magnetic-drift regime.

For the test case we set $d=2$, i.\,e., the motion of the plasma is
restricted to the $(x,y)$-plane, and the prescribed magnetic field $\vec B$ is
spatially uniform and transverse to the plasma; see
Section~\ref{subse:lorentz}. Let $0< r_0 < r_1 < R$ be three chosen radii
and let the computational domain $\domain\subset\mathbb{R}^2$ be the disc
centered in the origin with radius $R$. Choosing two background densities
$0<\rho_{\min}\ll\rho_{\max}$, introduce an initial density profile given
by an annulus of high density with radii $r_0$ and $r_1$:
\begin{align*}
  \rho_0(\vec x) :=
  \begin{cases}
    \begin{aligned}
    &\rho_{\min}
    &\quad&
    \text{for } |\vec x|_{\ell^2} \leq r_0 \text{ or }
    |\vec x|_{\ell^2} \geq r_1,
    \\[0.5em]
    &\rho_{\max}\,\delta_{\mathrm{per}}(\vec x)
    &\quad&
    \text{for } r_0 < |\vec x|_{\ell^2} < r_1.
    \end{aligned}
  \end{cases}
\end{align*}
Here,
\begin{align*}
  \delta_{\mathrm{per}}(\vec x)\,:=\,
  1 - \delta + \delta\,\sin\big(\ell \arctan(x_2/x_1)\big),
\end{align*}
is a small perturbation of the background density with chosen coefficients
$0 < \delta \ll 1/2$, and $\ell \in \mathbb N$. The initial potential
$\varphi_0(\vec x)$ is determined by the Gauß law,
\begin{equation*}
  -\Delta\varphi_0 = \alpha \rho_0,
\end{equation*}
and the initial velocity $\vec v_0(\vec x)=\vec v_{\mathrm{dr}}$ is given by
\eqref{eq:magnetic_drift_velocity}. We now introduce a scaling parameter
$\beta>0$ and set $\alpha:=\rho_{\max}^{-1}\beta^{2}$ and
$\omega_{\mathrm{c}}:=\beta^{2}.$ This results in the following relevant time
scales:
\begin{align*}
  \omega_{\mathrm{c}} \,\sim\, \beta^{2},
  \qquad
  \omega_{\mathrm{p}} \,\sim\, \sqrt{\rho_{\max}\alpha} = \beta^{1},
  \qquad
  \omega_{\mathrm{d}} \,\sim\,1.
\end{align*}

\paragraph{Computational setup and initial values}
We now fix the following parameters
\begin{align*}
  r_0=6,\;r_1=8,\;R=16,
  \qquad
  \rho_{\min}=10^{-6},\;
  \rho_{\max}=1,
  \qquad
  \beta=10^{6},
  \qquad
  \delta=0.1,
\end{align*}
with the goal to perform a series of computations for initial values with
differing perturbation modes, $\ell\in\{3,4,5\}$. We choose a coarse
discretization of $\domain$ into 12 quadrilaterals and refine the mesh
globally to refinement levels $r=6,7,8$ and $9$. The corresponding number
of degrees of freedom per component, i.\,e., $\dim\mathbb{V}_h$, are:
196,608 for $r=6$; 786,432 for $r=7$; 3,145,728 for $r=8$; and 12,582,912
for $r=9$. It holds true that $\dim\mathbb{H}_h\approx\dim\mathbb{V}_h/4$.
We first set the initial density by
interpolating $\rho_0(\vec x)$:
\begin{align*}
  \rho_h^0(\vec x) := \sum_{j=1}^{\mathcal{N}} \sum_{K \in \mathcal T_h}
  \rho_{j,K}^0\phi_{j,K}^h(\vec x),
  \qquad
  \rho_{j,K}^0 := \rho_0(\vec x_j).
\end{align*}
Then we determine the initial potential $\varphi_h^0\in\mathbb{H}_h$ by
solving the discrete Gauß law:
\begin{align*}
  \big(\nabla\varphi_h^0,\nabla\psi_h\big)_{\Ltwod}
  &= \alpha\,\langle \rho_h^0, \psi_h \rangle_h,
  \qquad
  \text{for all }\psi_h\in\mathbb H_h,
\end{align*}
and with homogeneous Dirichlet conditions enforced on the discrete boundary
of $\mathcal T_h$. Finally, the initial velocity is set to
\begin{align*}
  \vec v_h^0(\vec x) := \sum_{j=1}^{\mathcal{N}} \sum_{K \in \mathcal T_h}
  \vec v_{j,K}^0\phi_{j,K}^h(\vec x),
  \qquad
  \vec v_{j,K}^0 :=
  -\left.\left( \frac{\nabla \varphi_h^0 \times \vec \Omega} {|\vec
  \Omega|_{\ell^2}^2} \right)\right|_K (\vec x_{K,j}).
\end{align*}
We run the simulations to final time $t_f = 10$, which corresponds to 10
periods of the diocotron timescale. A series of temporal snapshots
outlining the time evolution of the solution for refinement level $r=9$ are
shown in Figures~\ref{fig:3-mode}, \ref{fig:4-mode}, and~\ref{fig:5-mode}.
In all simulations, we observe a time-step size on the order of $10^{-3}$
to $10^{-4}$, which is 2 to 3 orders of magnitude larger than the plasma
timescale and 8 to 9 orders of magnitude larger than the cyclotron
timescale. As the figures show, our method can successfully overstep the
fast-moving plasma and cyclotron dynamics while capturing the slow-moving
diocotron instabilities. In particular, our numerical results agree well
visually with existing results in the literature; cf. \cite[Figure
7.21]{Crockatt2021}, \cite{Crockatt2022}, \cite[Figure 9]{LaSpina2024},
and \cite[Figure 6.1]{eulerpoisson22}.
\begin{figure}[tp]
  \begin{center}
    \setlength\fboxsep{0pt}
    \setlength\fboxrule{0.5pt}
    \subfloat[$t=0.01\,t_f$]{\fbox{\includegraphics[width=40mm]{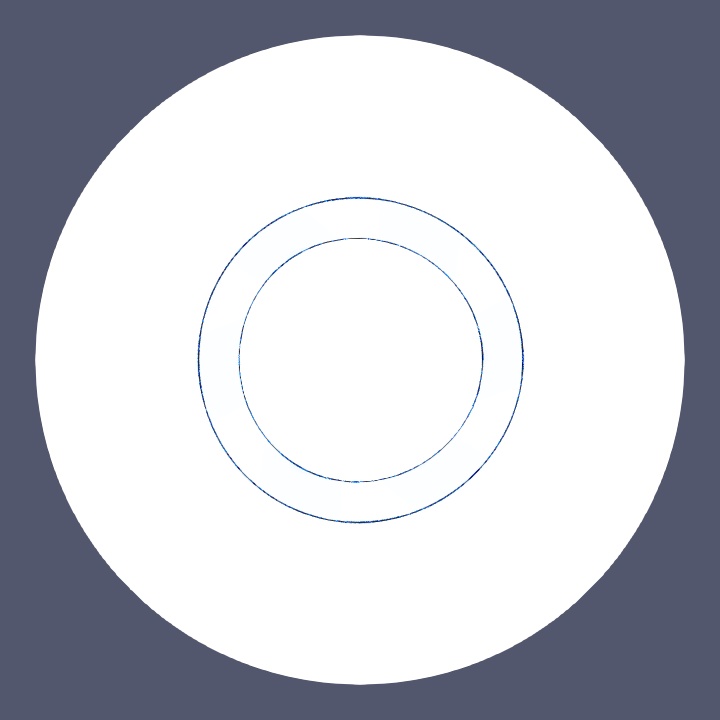}}}\;
    \subfloat[$t=1/8\,t_f$]{\fbox{\includegraphics[width=40mm]{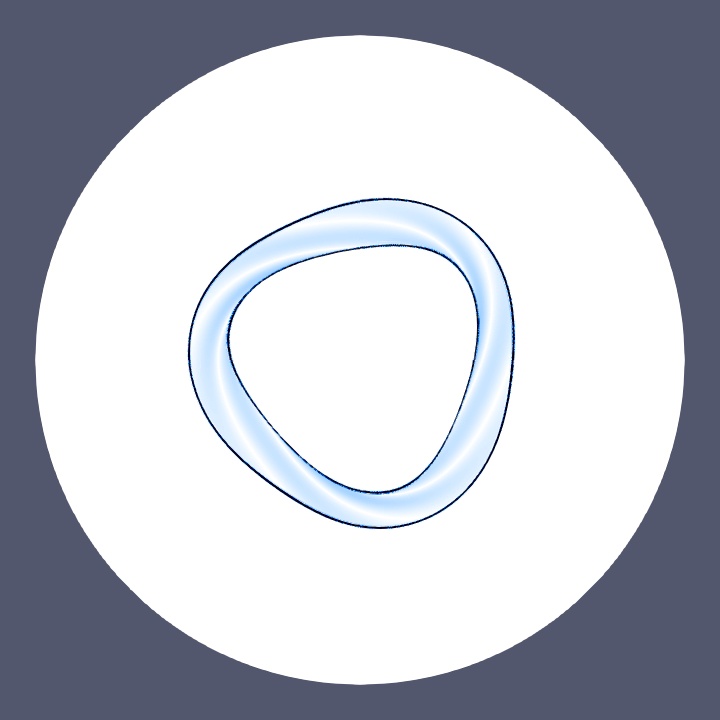}}}\;
    \subfloat[$t=2/8\,t_f$]{\fbox{\includegraphics[width=40mm]{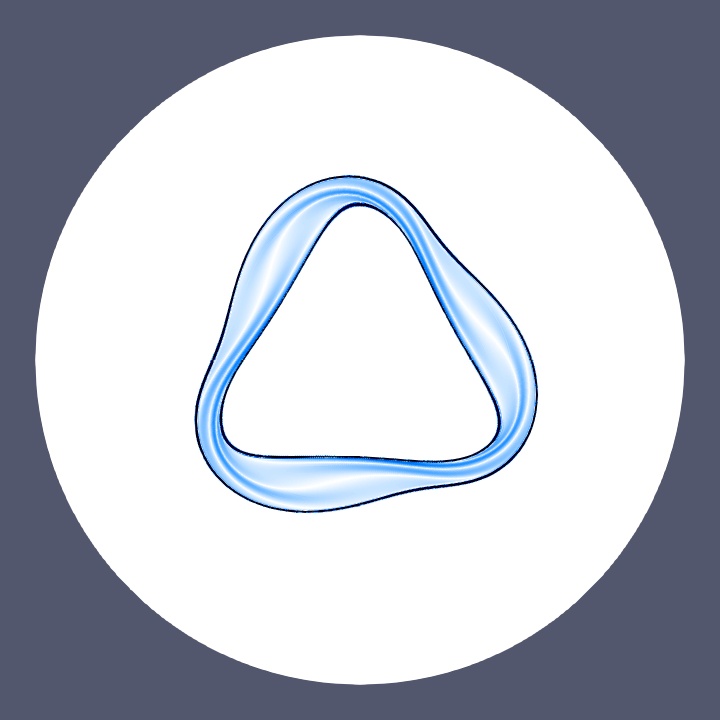}}}
    \vspace{-0.75em}
    \subfloat[$t=3/8\,t_f$]{\fbox{\includegraphics[width=40mm]{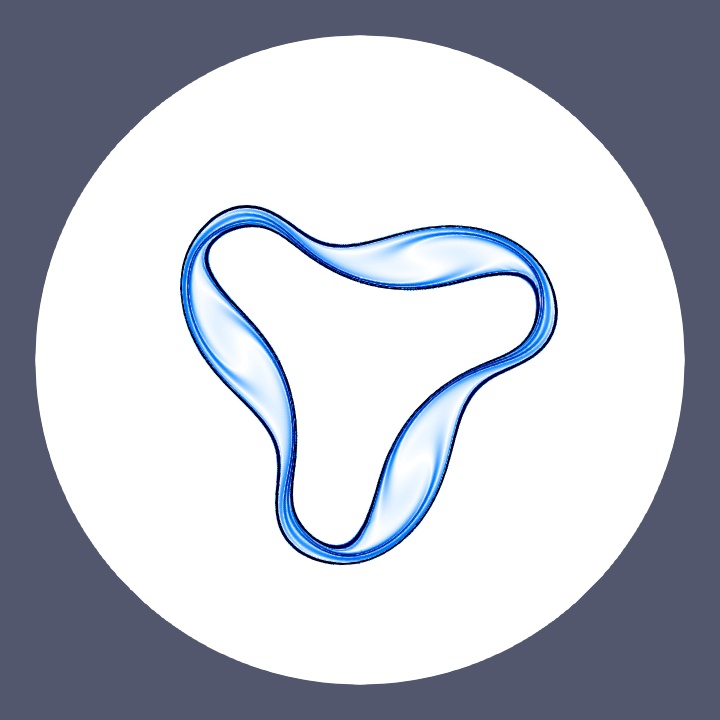}}}\;
    \subfloat[$t=4/8\,t_f$]{\fbox{\includegraphics[width=40mm]{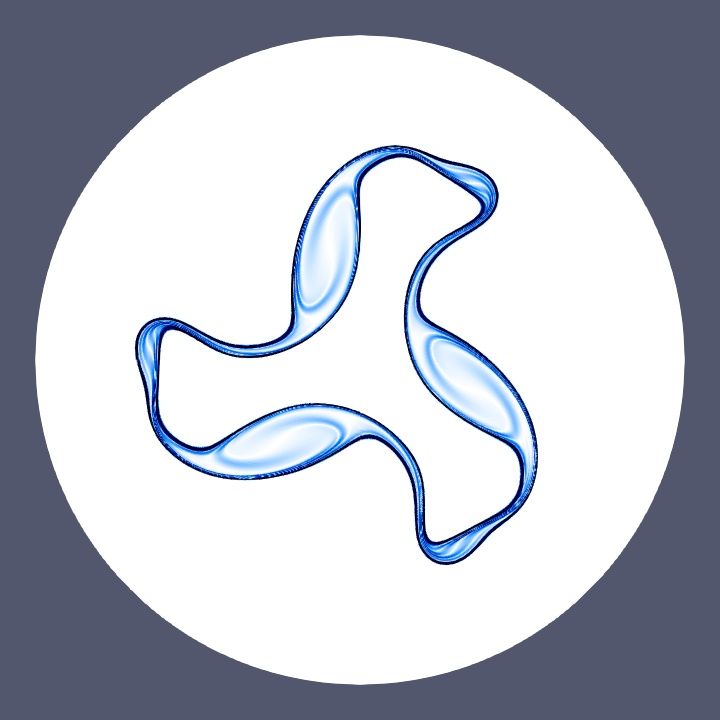}}}\;
    \subfloat[$t=5/8\,t_f$]{\fbox{\includegraphics[width=40mm]{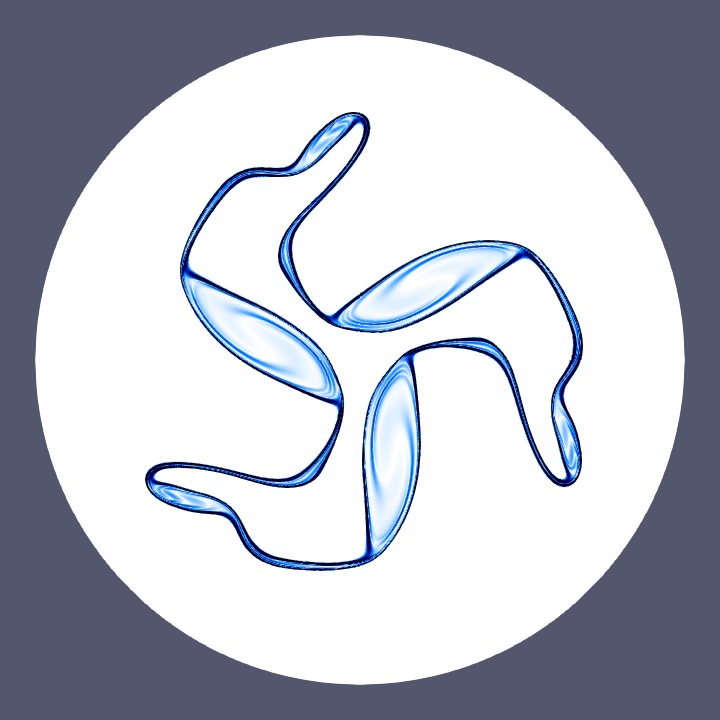}}}
    \vspace{-0.75em}
    \subfloat[$t=6/8\,t_f$]{\fbox{\includegraphics[width=40mm]{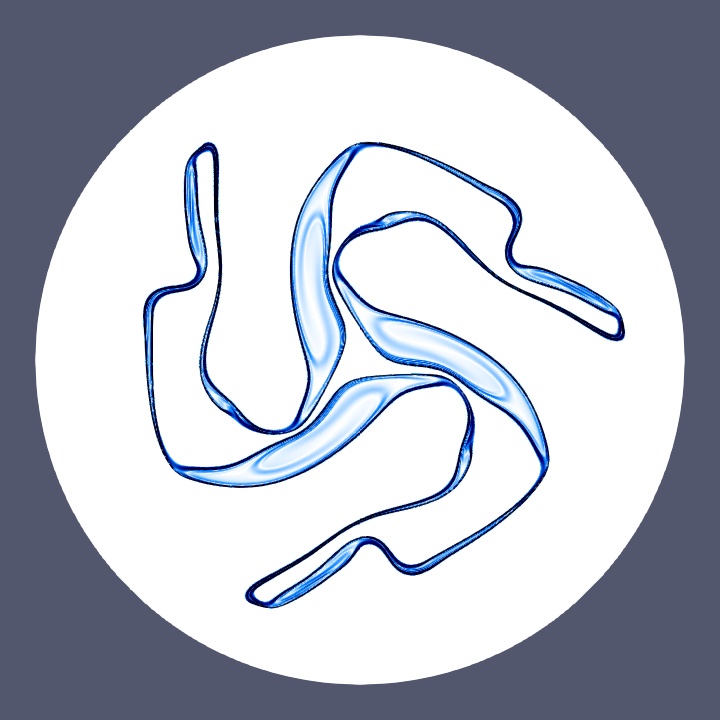}}}\;
    \subfloat[$t=7/8\,t_f$]{\fbox{\includegraphics[width=40mm]{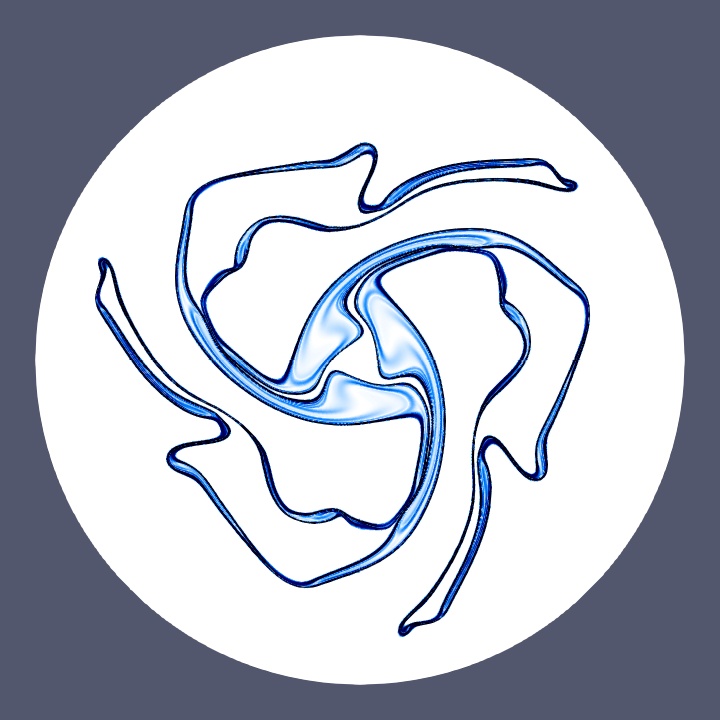}}}\;
    \subfloat[$t=t_f$]{\fbox{\includegraphics[width=40mm]{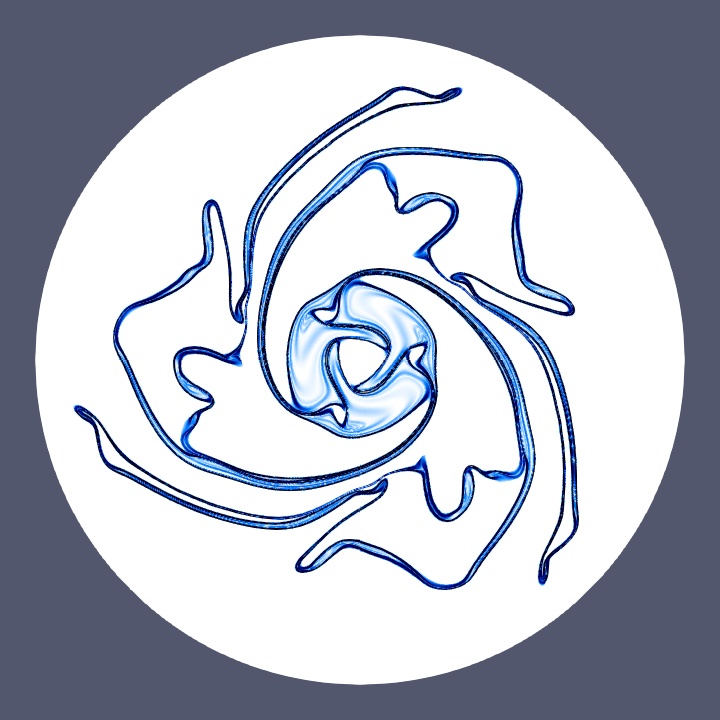}}}
  \end{center}
  \caption{\label{fig:3-mode}%
    Temporal snapshots of a schlieren plot of the density profile of the
    third mode diocotron instability test case. Reference computation with
    no restart on refinement level $r=9$ amounting to 12,582,912 dG degrees
    of freedom per component. Here, $t_f = 10$.}
\end{figure}
\begin{figure}[tp]
  \begin{center}
    \setlength\fboxsep{0pt}
    \setlength\fboxrule{0.5pt}
    \subfloat[$t=0.01\,t_f$]{\fbox{\includegraphics[width=40mm]{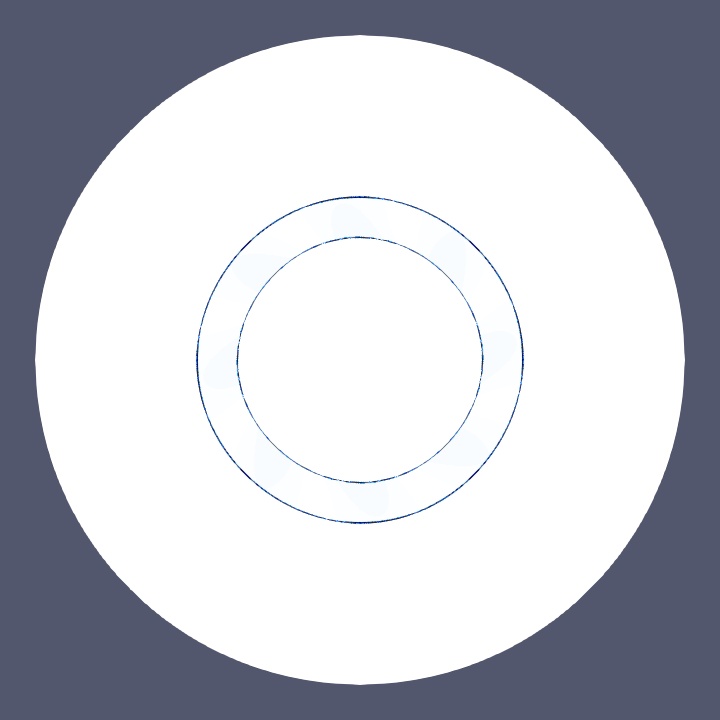}}}\;
    \subfloat[$t=1/8\,t_f$]{\fbox{\includegraphics[width=40mm]{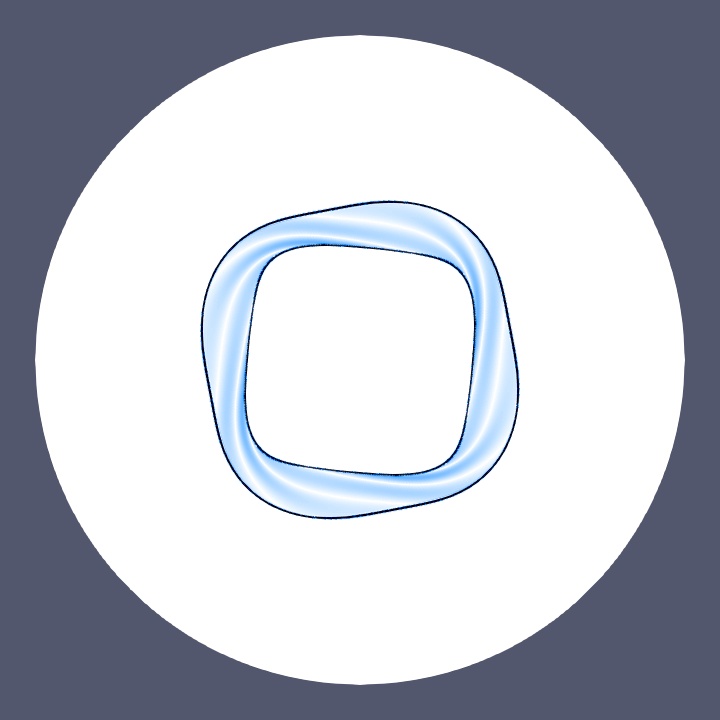}}}\;
    \subfloat[$t=2/8\,t_f$]{\fbox{\includegraphics[width=40mm]{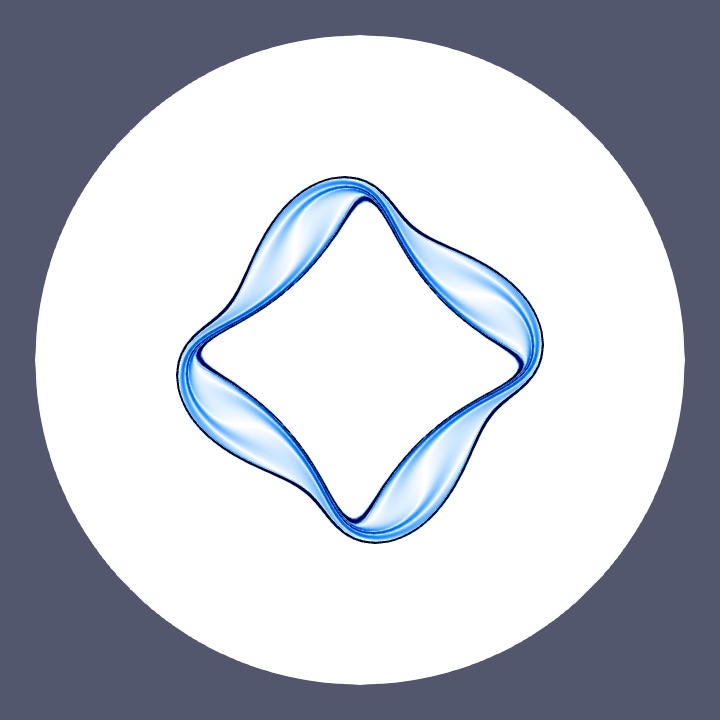}}}
    \vspace{-0.75em}
    \subfloat[$t=3/8\,t_f$]{\fbox{\includegraphics[width=40mm]{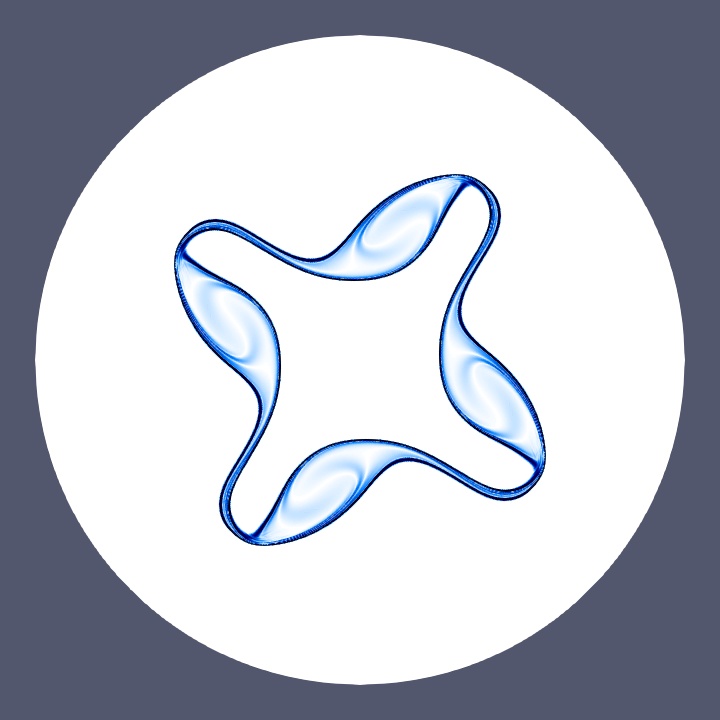}}}\;
    \subfloat[$t=4/8\,t_f$]{\fbox{\includegraphics[width=40mm]{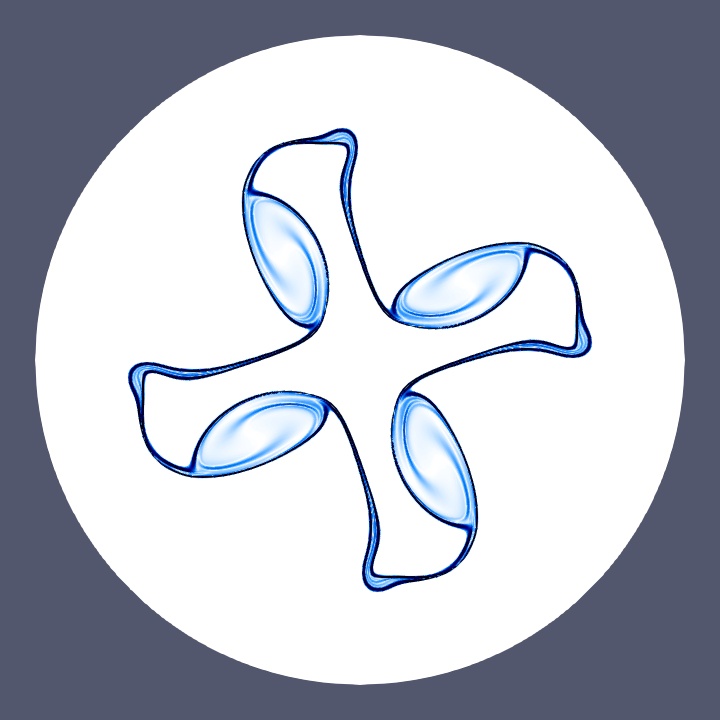}}}\;
    \subfloat[$t=5/8\,t_f$]{\fbox{\includegraphics[width=40mm]{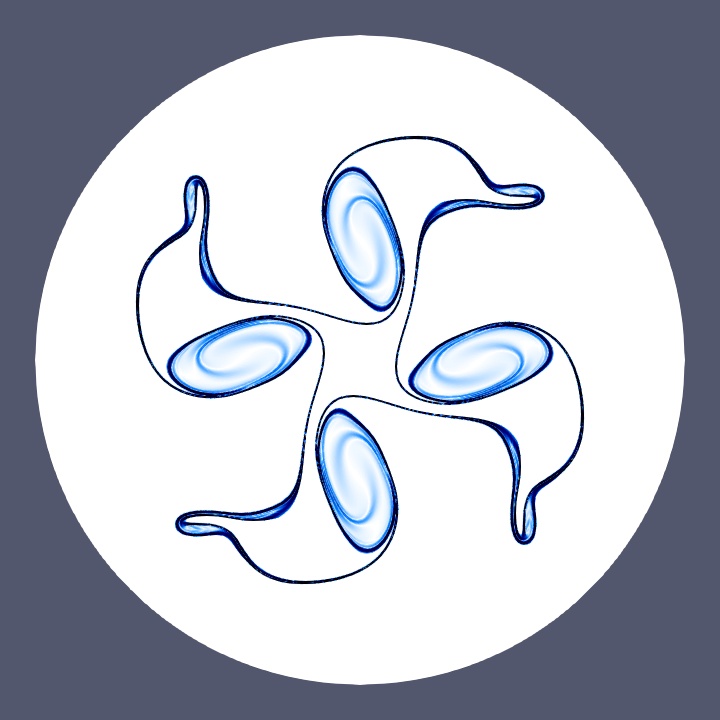}}}
    \vspace{-0.75em}
    \subfloat[$t=6/8\,t_f$]{\fbox{\includegraphics[width=40mm]{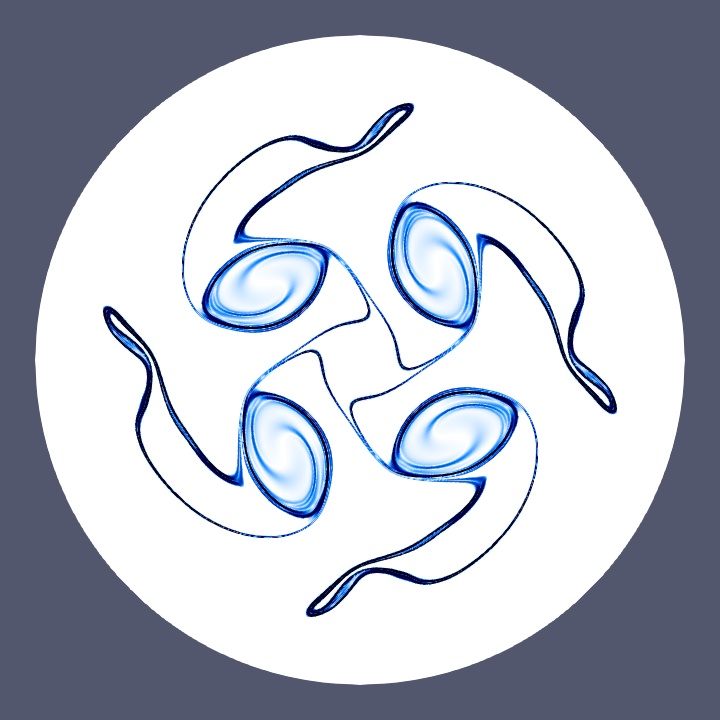}}}\;
    \subfloat[$t=7/8\,t_f$]{\fbox{\includegraphics[width=40mm]{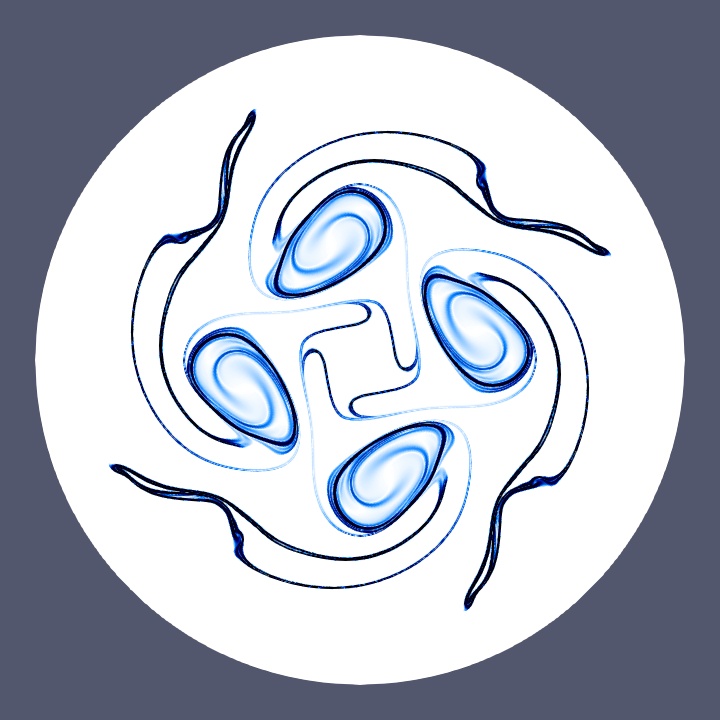}}}\;
    \subfloat[$t=t_f$]{\fbox{\includegraphics[width=40mm]{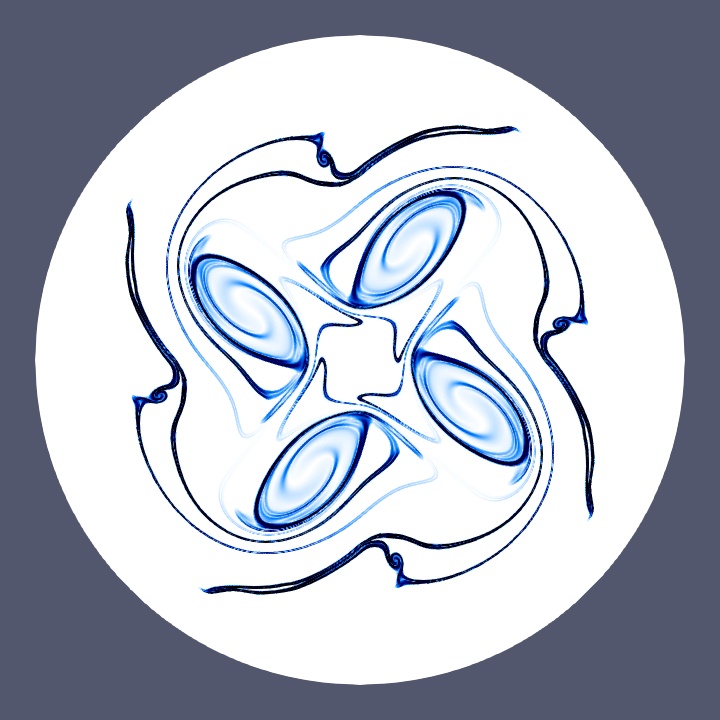}}}
  \end{center}
  \caption{\label{fig:4-mode}%
    Temporal snapshots of a schlieren plot of the density profile of the
    fourth mode diocotron instability test case. Reference computation with
    no restart on refinement level $r=9$ amounting to 12,582,912 dG degrees
    of freedom per component. Here, $t_f = 10$.}
\end{figure}
\begin{figure}[tp]
  \begin{center}
    \setlength\fboxsep{0pt}
    \setlength\fboxrule{0.5pt}
    \subfloat[$t=0.01\,t_f$]{\fbox{\includegraphics[width=40mm]{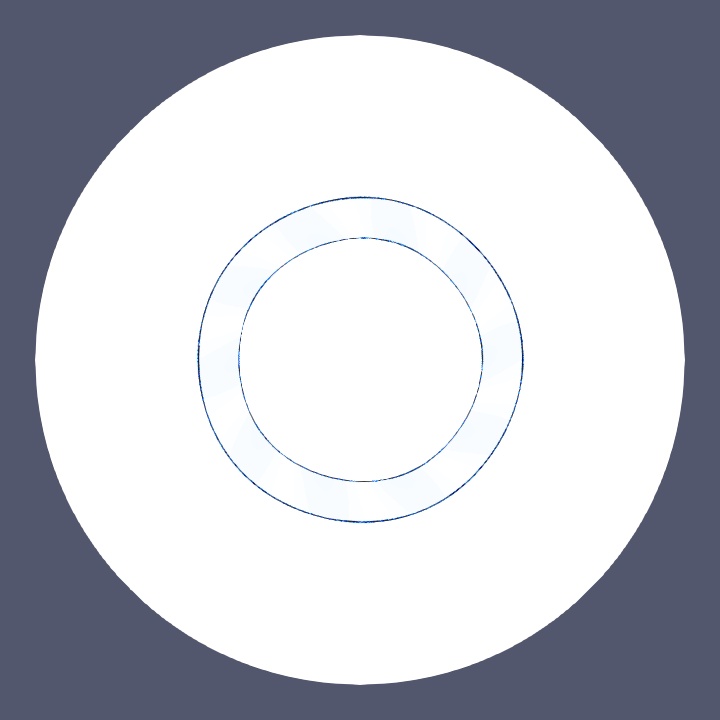}}}\;
    \subfloat[$t=1/8\,t_f$]{\fbox{\includegraphics[width=40mm]{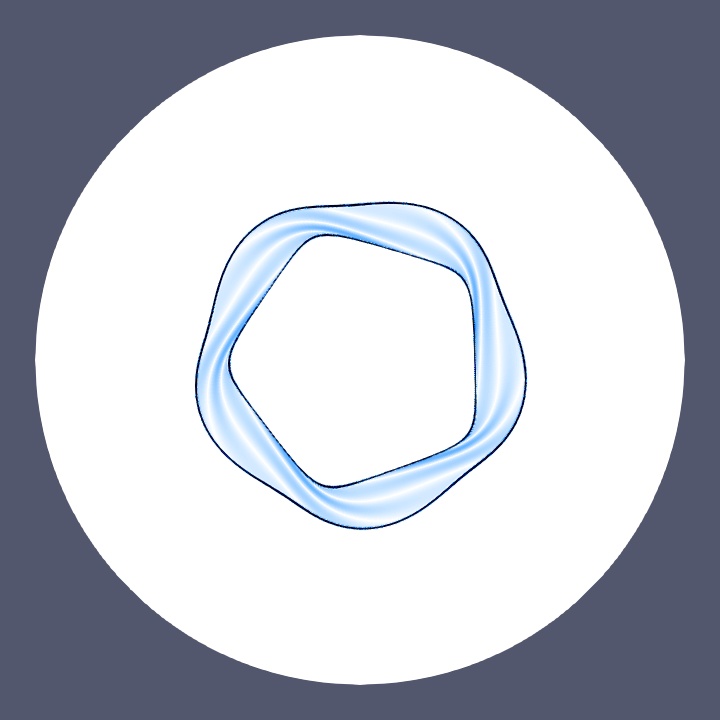}}}\;
    \subfloat[$t=2/8\,t_f$]{\fbox{\includegraphics[width=40mm]{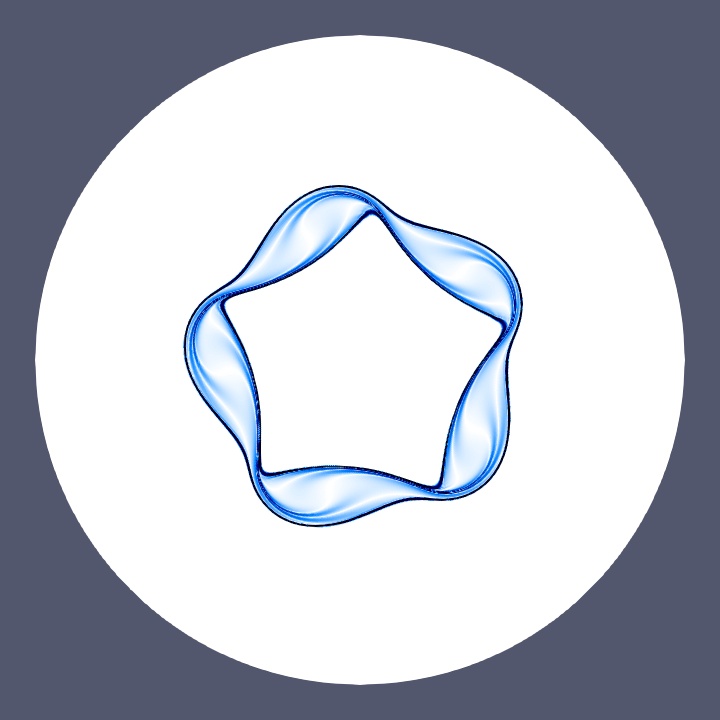}}}
    \vspace{-0.75em}
    \subfloat[$t=3/8\,t_f$]{\fbox{\includegraphics[width=40mm]{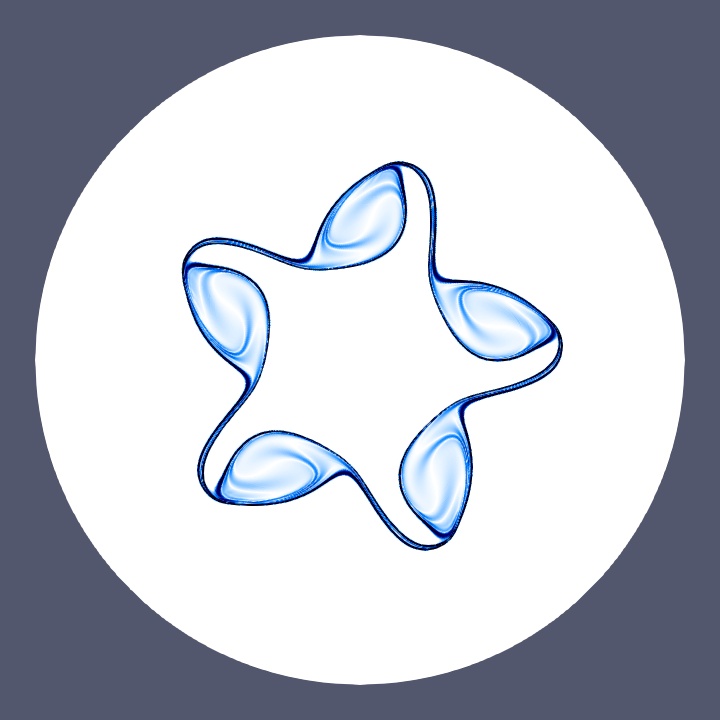}}}\;
    \subfloat[$t=4/8\,t_f$]{\fbox{\includegraphics[width=40mm]{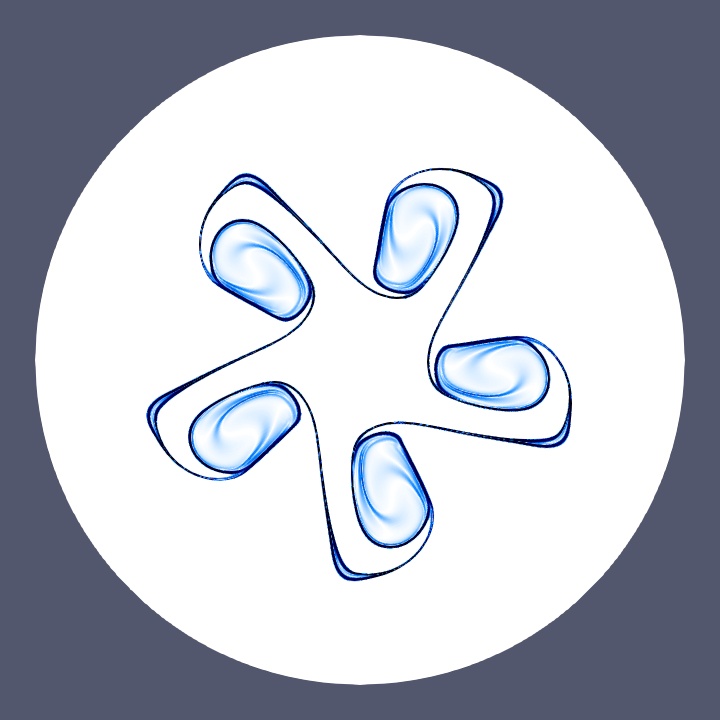}}}\;
    \subfloat[$t=5/8\,t_f$]{\fbox{\includegraphics[width=40mm]{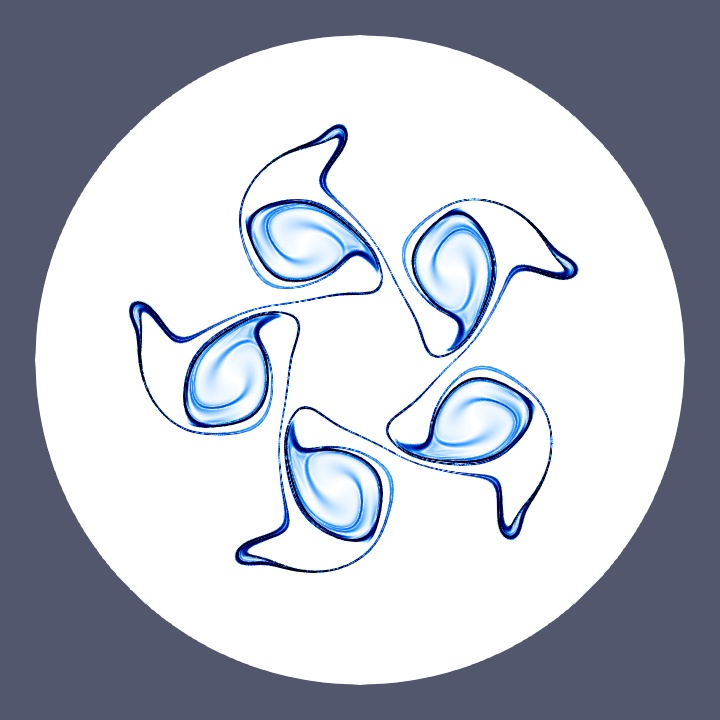}}}
    \vspace{-0.75em}
    \subfloat[$t=6/8\,t_f$]{\fbox{\includegraphics[width=40mm]{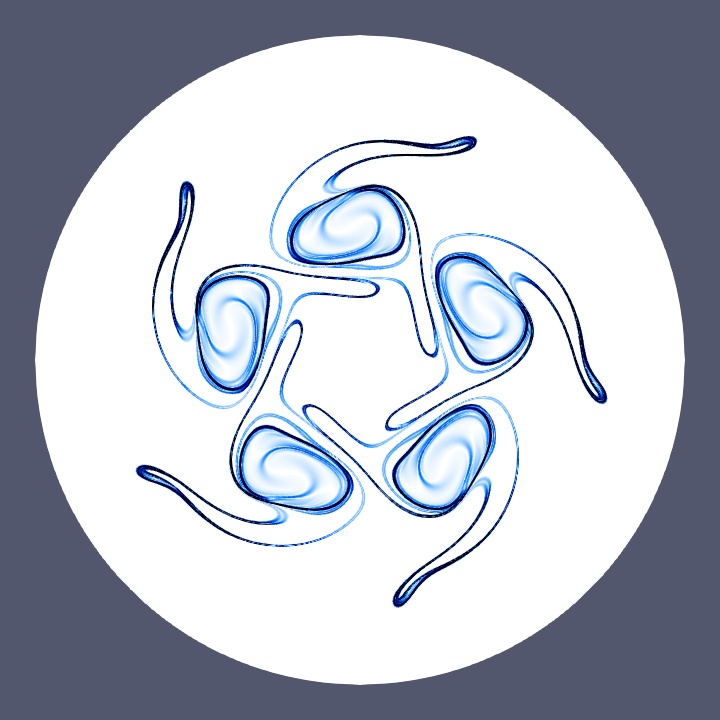}}}\;
    \subfloat[$t=7/8\,t_f$]{\fbox{\includegraphics[width=40mm]{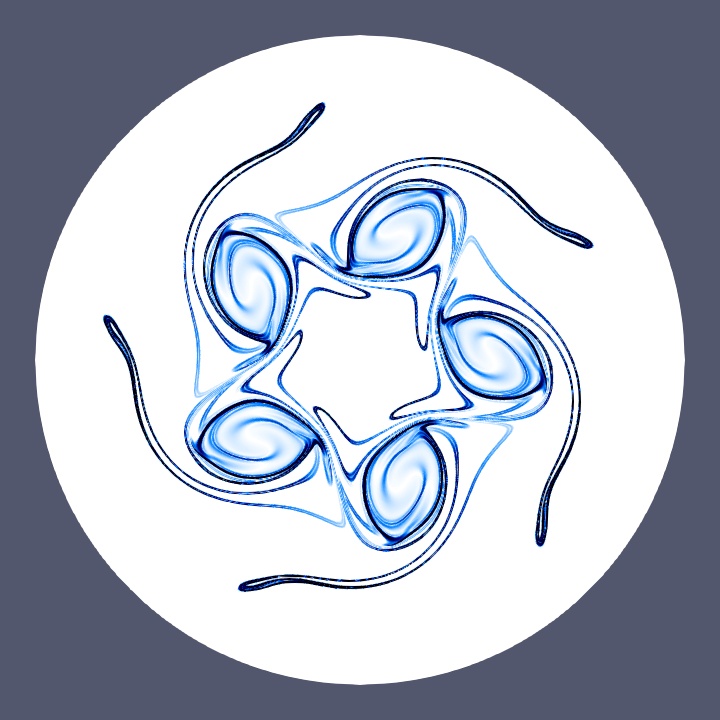}}}\;
    \subfloat[$t=t_f$]{\fbox{\includegraphics[width=40mm]{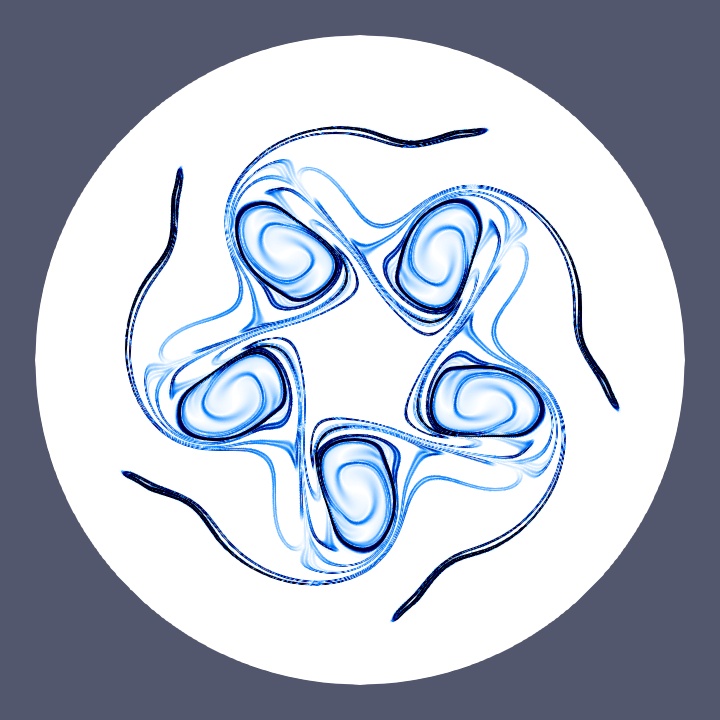}}}
  \end{center}
  \caption{\label{fig:5-mode}%
    Temporal snapshots of a schlieren plot of the density profile of the fifth
    mode diocotron instability test case. Reference computation with no restart
    on refinement level $r=9$ amounting, to 12,582,912 dG degrees of freedom
    per component. Here, $t_f = 10$.
  }
\end{figure}

\paragraph{Growth rate}
Next, we compare the  growth rate of each mode to the theoretical growth rate
predicted by the linear stability analysis conducted
in~\cite{davidson_influence_1998}. In the analysis, the electrostatic potential
$\varphi$ is dominated by the amplitude of its unstable $\ell$th Fourier mode,
which grows exponentially in time:
\begin{equation}
  \varphi(\vec x,t)
  \;\sim\;
  \exp\big(\textrm{i}(\ell \arctan(x_2/x_1) - \omega_\ell t)\big),
\end{equation}
where $\omega_\ell$ is a complex frequency, and
$\gamma_\ell:=\mathrm{Im}(\omega_\ell) > 0$ denotes the unstable growth rate of
the $\ell$th mode. The growth rate $\gamma_\ell$ can be computed explicitly in
terms of the order $\ell$ of the mode, the diocotron frequency
$\omega_{\mathrm{d}}$, and the geometric parameters $r_0$, $r_1$, and $R$; see
\cite[Equations~26, 27, and 28]{davidson_influence_1998}. For our setup, we
obtain the theoretical growth rates
\begin{align*}
  \gamma_3\approx 0.772, \qquad \gamma_4\approx 0.911, \qquad
  \gamma_5\approx 0.683.
\end{align*}
In order to compare our computational results to these theoretical
predictions, we follow a procedure outlined in~\cite[Section
7.5]{Crockatt2021} and \cite{Crockatt2022} to numerically estimate the
amplitude of the $\ell$th Fourier mode of our numerically computed
potential. At each time step $t_n$ we take a discrete Fourier transform of
the potential $\varphi_h^n$ interpreted as a function of polar angle
$\theta$ for fixed radius $r=r_0$, viz. $\varphi_h^n(\theta, r=r_0)$. Then,
we take the modulus of the coefficient corresponding to the $\ell$th mode.

Plots of the numerical amplitudes, normalized by their initial value, over time
for modes 3, 4, and 5 are shown in Figure~\ref{fig:diocotron-growth-rates}.
Since the stability analysis conducted in \cite{davidson_influence_1998} is
obtained from studying a small and linearized perturbation of a steady-state
solution, we can only hope for our numerical solution to agree with the
theoretical growth rate for a small time interval---after an initial startup
phase and before strong, nonlinear dynamics dominate. Therefore, for each plot
separately, we first visually compare the slope of the numerically computed
amplitudes to the slope of the (shifted) theoretical amplitudes
$\exp(\gamma_\ell t)$ and determine a region over which we average values for
the growth rates. The region are indicated in the plots with square brackets.
We then fit an exponential curve to the numerically computed amplitudes for
these time ranges. A table of the fitted numerical growth rates
$\gamma_{\ell,h}$ and their deviation from the theoretical growth rate
$\gamma_\ell$ is summarized in Figure~\ref{fig:diocotron-growth-rates}(d) for
the three modes.
\begin{figure}
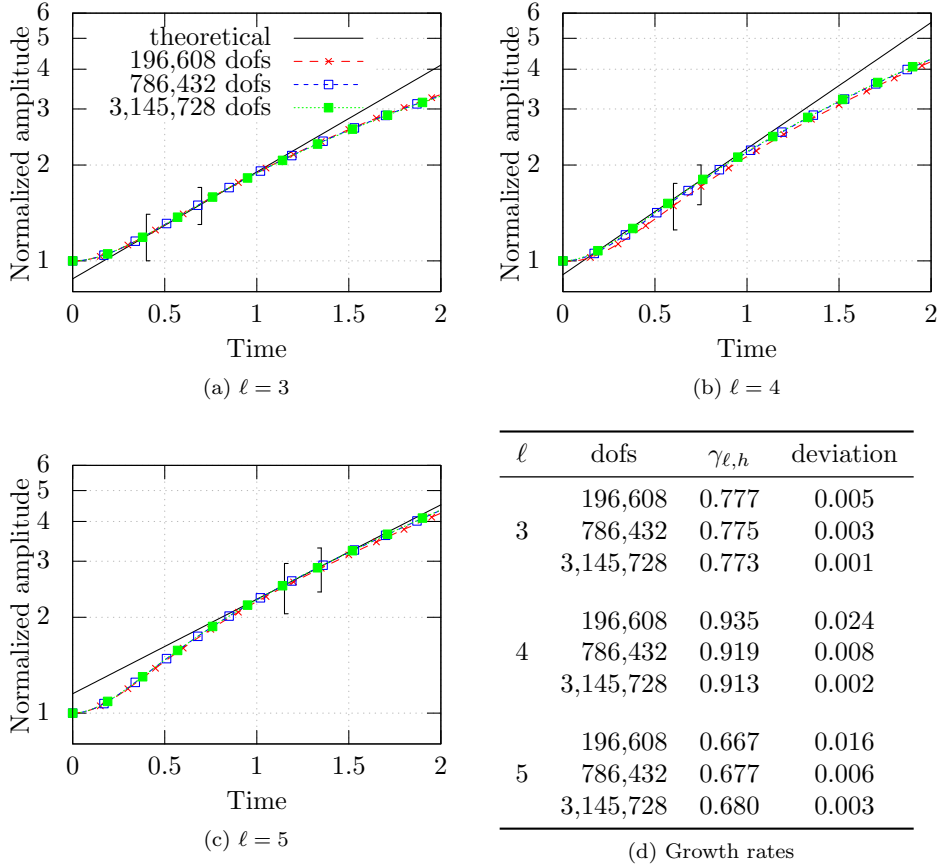
%
  \subfloat[$\ell = 3$]{%
    \input{diocotron_growth_rates-mode_3-amp-mode_3.tex}%
  }%
  \subfloat[$\ell = 4$]{%
    \input{diocotron_growth_rates-mode_4-amp-mode_4.tex}%
  }%
  \vspace{1em}
  \begin{minipage}{0.52\textwidth}%
    \subfloat[$\ell = 5$]{%
      \input{diocotron_growth_rates-mode_5-amp-mode_5.tex}%
    }%
  \end{minipage}%
  \hspace{-0.4em}%
  \begin{minipage}{0.49\textwidth}%
    \subfloat[Growth rates]{%
      \begin{tabular}{crcc}
        \toprule
        \multicolumn{1}{c}{$\ell$}&
        \multicolumn{1}{c}{dofs}&
        \multicolumn{1}{c}{$\gamma_{\ell,h}$}&
        \multicolumn{1}{c}{deviation}\\
        \midrule
        & 196,608     & 0.777 & 0.005 \\
        3 & 786,432     & 0.775 & 0.003 \\
        & 3,145,728   & 0.773 & 0.001 \\[1em]
        & 196,608     & 0.935 & 0.024 \\
        4 & 786,432     & 0.919 & 0.008 \\
        & 3,145,728   & 0.913 & 0.002 \\[1em]
        & 196,608     & 0.667 & 0.016 \\
        5 & 786,432     & 0.677 & 0.006 \\
        & 3,145,728   & 0.680 & 0.003 \\
        \bottomrule
      \end{tabular}
    }%
  \end{minipage}%
  \caption{\label{fig:diocotron-growth-rates}%
    Theoretical versus computed growth rates for modes (a) 3, (b) 4, and (c) 5.
    Log scale on the $y$ axis.
    Numerical growth rates are computed from the numerical amplitudes by
    fitting an exponential curve to the data between the square brackets in the
    plots from (a) $t = 0.4$ to $t = 0.7$, (b) $t = 0.6$ to $t = 0.75$, (c) $t
    = 1.15$ to $t = 1.35$. The numerical growth rates and their deviations from
    the theoretical growth rate (a) $\gamma_3 \approx 0.772$, (b) $\gamma_4
    \approx 0.911$, (c) $\gamma_5 \approx 0.683$ are given in (d).
  }%
\end{figure}

\paragraph{Gauß law residual and energy}
We compute the \emph{relative Gauß law residual} (see
Corollary~\ref{cor:gauss_law_residual})
\begin{equation*}
  \widehat{\mathcal R}_h^n := \frac{\|\nabla \widetilde \varphi_h^n -
  \nabla \varphi_h^n\|_{\Ltwod}}{\|\nabla \varphi_h^n\|_{\Ltwod}}
\end{equation*}
and the \emph{relative total energy}
\begin{equation*}
  \widehat {\mathcal E}_{\mathrm{tot},h}^n :=
  \frac
  {
    \sum_i \frac{m_i}{2} \rho_i^n |\vel_i^n|^2 +
    \frac{1}{2\alpha}\int_D |\nabla \varphi_h^n|^2\,dx
  }
  {
    \sum_i \frac{m_i}{2} \rho_i^0 |\vel_i^0|^2 +
    \frac{1}{2\alpha}\int_D |\nabla \varphi_h^0|^2\,dx
  }
\end{equation*}
at each time step $t_n$ for three mesh refinements and three restart
strategies: ``no restart", ``full restart"
(Definition~\ref{def:full_restart}), and ``relaxation"
(Definition~\ref{def:relaxation}). We plot our results in
Figure~\ref{fig:diocotron_residual_energy}.

The ``no restart" strategy violates the Gauß law but conserves energy.
Conversely, the ``full restart" strategy loses energy over time but preserves
the Gauß law. These violations grow over time, but decrease with mesh
refinement. It is not a priori guaranteed that restarting the potential lowers
the potential energy, so this observation is surprising. It also means that the
``relaxation" strategy falls back to being identical to the ``full restart"
strategy, since no relaxation of the kinetic energy actually occurs (see
Remark~\ref{rem:relaxation}).
\begin{figure}%
  \subfloat[49152 dofs]{\input{energy_and_residual-residual-1.tex}}%
  \subfloat[49152 dofs]{\input{energy_and_residual-total_energy-1.tex}}\\
  \subfloat[196608 dofs]{\input{energy_and_residual-residual-2.tex}}%
  \subfloat[196608 dofs]{\input{energy_and_residual-total_energy-2.tex}}\\
  \subfloat[786432 dofs]{\input{energy_and_residual-residual-3.tex}}%
  \subfloat[786432 dofs]{\input{energy_and_residual-total_energy-3.tex}}%
  \caption{\label{fig:diocotron_residual_energy}%
    Relative Gauß law residual and relative energy over time for refinement
    levels (a), (b) $r = 5$, (c), (d) $r = 6$, and (e), (f) $r = 7$ of the
    diocotron instability. Plots for the ``no restart", ``relaxation", and
    ``full restart" strategies at each refinement level. Here, $t_f = 10$.
    The ``full restart" and ``relaxation" plots are identical.}%
\end{figure}%


\section{Conclusion}
In this paper, we discussed a fully discrete numerical scheme for the
electrostatic Euler-Poisson equations with a given magnetic field. The
scheme uses an operator split to treat the Euler subsystem and source
subsystem separately. The Euler subsystem is treated explicitly with a
graph viscosity method that is invariant domain preserving and
conservative. The source subsystem, which couples the electrostatic
potential, the momentum, and the magnetic field, exhibits multiscale
phenomena that operate at various timescales spanning many orders of
magnitude. To address this issue, while also maintaining an energy balance,
we solved the source system implicitly with a PDE Schur complement that
reduces to solving a Poisson type problem at each time step. We proved that
the full numerical scheme preserves the desired structure of the PDE
system, and we demonstrated that the scheme performs well in the magnetic
drift limit without having to resolve high-frequency oscillations. For
future work, we will adapt the methods presented here to the full
Euler-Maxwell system where both the electrostatic potential and the
magnetic field are not given but instead computed self-consistently from
Maxwell's equations.


\section*{Acknowledgments}
JH and MM: acknowledge partial support by the National Science Foundation
under grant DMS-2045636 and by the Air Force Office of Scientific Research,
USAF, under grant/contract number FA9550-23-1-0007. IT: has been supported
by the Department of Energy under grant/contract LDRD-CIS-226834, the
National Science Foundation under grant DMS-2409841, and by a Simons Travel
Award MPS-TSM-00007151. JNS: This work was partially supported by the Laboratory
Directed Research and Development program at Sandia National Laboratories, and
by U.S. Department of Energy, Office of Science (SC), Office of Advanced
Scientific Computing Research's [Applied Mathematics / Computer Science /
Advanced Computing Technologies] Competitive Portfolios program. Sandia is a
multimission laboratory managed and operated by National Technology and
Engineering Solutions of Sandia LLC, a wholly owned subsidiary of Honeywell
International Inc. for the U.S. Department of Energy’s National Nuclear Security
Administration under contract DE-NA0003525.


\appendix

\section{Barotropic closures}\label{AppBaro}
We consider the case of the compressible Euler equations with barotropic closure.
In this case: the pressure is given by $p = p(\rho)$, the sound speed is
$c = p'(\rho)$, and the specific internal energy is given by
$e = e(\rho) = \int^\rho r^{-2}p(r) \mathrm d r$,
which leads to the following entropy-dissipation balance for the barotropic
Euler system:
\begin{align*}
  \frac{\mathrm d}{\mathrm dt}
  \int_{\domain}
  \frac{1}{2 \rho} |\mom|^2 + \rho e(\rho)
\mathrm{d}\xcoord
+ \int_{0}^t \Big\{ \int_{\bdry} \Big( \frac{1}{2 \rho} |\mom|^2 + \rho
e(\rho) + p(\rho) \Big) \vel\cdot\normal \, \mathrm{d}o_x \Big\} \mathrm{d} t \leq
0.
\end{align*}
This motivates the following definitions:
\begin{align}
\label{IsothermalEfluxPair}
\eta(\state) := \tfrac{1}{2 \rho}
|\mom|^2 + \rho e(\rho) \ \ \ \text{and} \ \ \
\mathbb{q}(\state) :=
\big( \tfrac{1}{2 \rho} |\mom|^2 + \rho e(\rho) + p(\rho) \big)\vel,
\end{align}
for the mathematical entropy and entropy-flux, respectively. For the specific
case of the isothermal closure $p(\rho) = \theta \rho$, the specific internal
energy takes the form $e = e(\rho) = \theta \ln \rho$.

\section{Hyperbolic solver}\label{sec:HypSolver}
This section provides a brief outline of the numerical methods used to solve
the compressible Euler subsystem for all the computational experiments advanced in Section
\ref{sec:numerical}. This section does not introduce any novel concept,
idea, or numerical scheme, and it is only provided for the sake of completeness.
The main ideas advanced in this section were originally developed in the
sequence of papers \cite{GuePo2016I, GuePo2016II, Guer2018, GuePoTom2019} and
revised, for the case of discontinuous spatial discretizations, in
\cite{euler25}. In this brief appendix, we limit ourselves to describe the
first-order scheme. For the implementation of higher order schemes we refer the
reader to \cite{euler25}.


The low-order scheme is obtained by using a first-order graph viscosity method
suggested first in \cite{GuePo2016I} while also handling the boundary
conditions as described in \cite{euler25}. Let $t_n$ be the current time,
$\tau_n$ the current time step, and advance in time by setting $t_{n+1} =
t_n + \tau_n$. Let ${\state}_h^n = \sum_{i \in \mathcal{V}} {\state}_i^n
\phi_i({\vec x})$ be the finite element approximation at time $t_n$. The first
order approximation at time $t_{n+1}$ is computed as
\begin{align}\label{eq:FO}
  \begin{split}
    m_i\frac{{\state}_i^{\low,n+1} - {\state}_i^n}{\tau_n}
    &+ \sum_{j \in \mathcal{I}(i)} \flux({\state}_j^n) \mathbf{c}_{ij}
    - d_{ij}^\low ({\state}_j^n - {\state}_i^n) \\
    &+ \flux(\state_i^{\bdry,n})\, \bc_{i}^{\bdry}
    - d_{i}^\low (\state_i^{\bdry,n} - \state_i^n)= \bzero ,
  \end{split}
\end{align}
where $m_i$ is the lumped mass entry corresponding to the shape function
$\phi_i(\xcoord)$, the vectors $\bc_{ij}\in \mathbb{R}^d$ and
$\bc_{i}^{\bdry}\in \mathbb{R}^d$ were defined in formulas (2.10)-(2.11) of
reference \cite{euler25}. The vector $\bc_{i}^{\bdry}$ is zero, unless the
corresponding collocation point $\xcoord_i$ lies on the boundary $\bdry$
\cite{euler25}. The set $\mathcal{I}(i)$ is the so-called stencil which is
defined as $\mathcal{I}(i) = \big\{ j \in \vertices \, | \, \mathbf{c}_{ij} \not
= \bzero \big\}$, $\flux({\state}) \in \mathbb{R}^{(d+2) \times d}$ is the usual
Euler flux $\flux({\state}) = [\mom^\transp, \rho^{-1}\mom
\mom^\transp +\mathbb{I}p, \rho^{-1}\mom^\transp(\totme + p)]$. For the case of
barotropic models $\flux({\state}) \in
\mathbb{R}^{(d+1) \times d}$ is defined as $\flux({\state}) = [\mom^\transp,
\rho^{-1}\mom \mom^\transp +\mathbb{I}p]$. The state $\state_i^{\bdry,n}$
corresponds with boundary data which is assumed to be known. The low-order
graph-viscosities $d_{ij}^{\low,n} > 0 $ and $d_{i}^{\low,n} > 0$ in
\eqref{eq:FO} are computed as:
\begin{align}
  \label{eq:dij_low_order}
  \begin{cases}
    \begin{aligned}
      d_{ij}^{\low,n}
      &:= |\bc_{ij}|_{\ell^2} \lambda_{\mathrm{max}}(\state_i^{n},
      \state_j^{n}, \normal_{ij}),
      &\text{where }&
      \normal_{ij}= \frac{\bc_{ij}}{|\bc_{ij}|_{\ell^2}},
      \quad\text{for }i\not=j,
      \\[0.25em]
      d_{i}^{\low,n}
      &:=
      |\bc_{i}^{\bdry}|_{\ell^2}
      \lambda_{\mathrm{max}}(\state_i^{n}, \state_i^{\bdry,n},
      \normal_{i}),
      &\text{where }&
      \normal_{i}=
      \frac{\bc_{i}^{\bdry}}{|\bc_{i}^{\bdry}|_{\ell^2}}. \\
      d_{ii}^{\low,n} &:=
      -\sum_{j \in \mathcal{I}(i),j\not=i}
      d_{ij}^{\low,n} \;-\; d_{i}^{\low,n} .
    \end{aligned}
  \end{cases}
\end{align}
Here $\lambda_{\max}({\state}_L, {\state}_R, {\vec n})$ is the maximum wave
speed of the one dimensional Riemann problem: $\partial_t {\state} + \partial_x
(\flux({\state}) {\vec n}) = 0$, where $x = {\vec x} \cdot {\vec n}$, with
initial condition: ${\state}(x,0) = {\state}_L = [\rho_L, \mom_L,
\totme_L]^\transp$ if $x<0$, and ${\state}(x,0) = {\state}_R = [\rho_R, \mom_R,
\totme_R]^\transp$ if $x\geq 0$. The maximum wavespeed of this Riemann problem
can be computed exactly \cite[Chap.~4]{Toro2009} for the co-volume equations of
state, however this comes at the expense of solving a nonlinear problem. In
theory and practice, any upper bound of the maximum wavespeed of the Riemann
problem could be used in formula \eqref{eq:dij_low_order} while still preserving
rigorous mathematical properties of the scheme \cite{GuePo2016I, GuePoTom2019}.
We will denote, generically, any estimate of the maximum wavespeed as
$\lambda^{\#}({\state}_L, {\state}_R, \normal)$. For all the computations
reported in this paper we use the following maximum wavespeed estimates.

For the case of the co-volume equation of state, the pressure is given by
$p(1-b\rho)=(\gamma-1) e \rho$ with $b \ge 0$.
In this case, we use
$\lambda^{\#}({\state}_L, {\state}_R, {\vec n})$ defined by:
\begin{align}
\label{LambdaCovol}
&\lambda^{\#}({\state}_L, {\state}_R, {\vec n}) =
\max((\lambda_1^-(p^{\#}) )_{-},\ (\lambda_3^+(p^{\#}))_+), \\
\nonumber
&\lambda_1^-(p^{\#})=v_L - c_L\left(
1+\frac{\gamma+1}{2\gamma}\left(\frac{p^{\#}-p_L}{p_L}\right)_+
\right)^\frac12, \\
\nonumber
&\lambda_3^+(p^{\#})=v_R + c_R\left(
1+\frac{\gamma+1}{2\gamma}\left(\frac{p^{\#}-p_R}{p_R}\right)_+
\right)^\frac12, \\
\label{PSharp}
&p^{\#} := \left(
\frac{c_L(1-b\rho_L)+c_R(1-b\rho_R)-\frac{\gamma-1}{2}(v_R-v_L)}
{ c_L(1-b\rho_L)\ p_L^{-\frac{\gamma-1}{2\gamma}}
+ c_R(1-b\rho_R)\ p_R^{-\frac{\gamma-1}{2\gamma} } }
\right)^{\frac{2\gamma}{\gamma-1}},
\end{align}
where $z_-:=\max(0,-z)$, $z_+:=\max(0,z)$, $v_L = \vel_L \cdot \normal$,
$v_R = \vel_R \cdot \normal$, $p_L$ and $p_R$ are the left and right pressures,
and $c_L$ and $c_R$ are left and right sound speeds. Formula \eqref{PSharp} is
often referred to as the two-rarefaction estimate \cite{Toro2009}. It is
possible to show that $\lambda^{\#}({\state}_L,
{\state}_R, \normal) \geq \lambda_{\max}({\state}_L,
{\state}_R, \normal)$ for $1 < \gamma \le \frac53$, see \cite{GuePo2016II}. We
finally mention that scheme \eqref{eq:FO} equipped with
the viscosity \eqref{LambdaCovol} guarantees the entropy-dissipation assumption
\eqref{EulerSolverDissipation}.

If the pressure is independent of the specific internal energy, that is $p =
p(\rho)$, and $p'(\rho) \geq 0$, we have that admissibility of the scheme only
requires positivity of the density. In this context, the sound speed is simply
given by $c = \sqrt{p'(\rho)}$. It can be proven that any choice of
$\lambda^{\#}({\state}_L, {\state}_R, \normal)$
satisfying the property
$
\lambda^{\#}({\state}_L, {\state}_R, \normal)
\geq
\max\{|\vel_L\cdot\normal|, |\vel_R\cdot\normal|\}
$
is enough to guarantee
positivity of the density.
However, this is, in practice, not enough to tame the behaviour of non-smooth
solutions.
Therefore, for the case of a barotropic equation of state, we use
\begin{align*}
\lambda^{\#}({\state}_L, {\state}_R, \normal)
=
\max\{|\vel_L\cdot\normal|, |\vel_R\cdot\normal|\} + \max\{c_L, c_R\},
\end{align*}
where $c_L = \sqrt{p'(\rho_L)}$ and $c_R = \sqrt{p'(\rho_R)}$.
Note that this estimate of the maximum wavespeed is more than enough to guarantee
admissibility of the solution. However, we make no claim as to whether this
estimate is a sharp upper bound of the maximum wavespeed.

\begin{remark}[Convex reformulation and CFL condition] The scheme
\eqref{eq:FO} can be rewritten as
\begin{multline}
\label{ConvexRef}
\state_i^{\low,n+1} = \left(1 +\frac{2 \dt_n
d_{ii}^{\low,n}}{m_i}\right)
\state_{i}^{n}
+ \frac{2 \dt_n d_{i}^{\low,n}}{m_i} \overline{\state}_{i}^{\bdry,n}
+ \sum_{j \in \mathcal{I}(i)\backslash\{i\}} \frac{2
\dt_n d_{ij}^{\low,n}}{m_i} \overline{\state}_{ij}^{n},
\end{multline}
where $\overline{\state}_{ij}^{n}$ and $\overline{\state}_{i}^{\bdry,n}$ are
the bar-states defined by
\begin{align}
  \label{UsualBarState}
  \overline{\state}_{ij}^{n} &= \frac{1}{2}(\state_j^{n} +
  \state_i^{n})
  - \frac{|\bc_{ij}|}{2 d_{ij}^\low} \left(\flux(\state_j^{n}) -
  \flux(\state_i^{n})\right) \normal_{ij}, \\
\label{BoundaryBarState}
\overline{\state}_{i}^{\bdry,n} &=
\frac{1}{2}(\state_i^{\bdry,n} + \state_i^{n})
- \frac{|\bc_i^{\bdry}|}{2 d_{i}^\low}
\left(\flux(\state_i^{\bdry,n}) -
\flux(\state_i^{n})\right)
\normal_{i}.
\end{align}
We note that the states $\{\overline{{\state}}_{ij}^{n}\}_{j \in
\mathcal{I}(i)}$ are admissible, provided that ${\state}_i^{n}$ and ${\vec
u}_j^{n}$ are admissible \cite{GuePo2016I, GuePoTom2019, euler25}.
We also note that ${\state}_i^{n+1}$ is a convex combination of the bar-states
$\{\overline{{\state}}_{ij}^{n}\}_{j \in \mathcal{I}(i)}$ and
$\overline{\state}_{i}^{\bdry,n} $ provided the condition
$\big(1 +\frac{2 \dt_n d_{ii}^{\low,n}}{m_i} \big) \geq 0$ holds.
Therefore, we define the largest admissible time-step size as
\begin{align}
\label{CflFormula}
  \dt_n = \mathrm{CFL} \cdot \min_{i \in \vertices}
  \Big(-\frac{m_i}{2 d_{ii}^{\low,n}}\Big)
\end{align}
where $\mathrm{CFL} \in (0,1)$ is a user defined parameter.
\end{remark}

Describing the extension of this scheme to higher order accuracy is beyond the
scope of this paper.
Here, we briefly mention that the second-order accurate scheme used for all our
computations uses (exactly) the flux-limited high-order method described in
\cite{euler25}.
For the case of the barotropic Euler system, we implemented the
entropy-viscosity method as described in \cite{euler25} using the entropy-flux
pair defined in \eqref{IsothermalEfluxPair}.

\bibliographystyle{siamplain}
\bibliography{references}

\end{document}